\newtheorem{theorem}{Theorem}[section]
\newtheorem{lemma}{Lemma}[section]
\newtheorem{example}{Example}[section]
\newtheorem{definition}{Definition}[section]
\newtheorem{remark}{Remark}[section]
\DeclareMathOperator{\diam}{diam}
\DeclareMathOperator{\rad}{rad}
\DeclareMathOperator{\rank}{rank}
\DeclareMathOperator{\In}{In}
\title{Inertia and spectral 
	symmetry of the eccentricity matrices of a class of bi-block graphs }
\author{T. Divyadevi\thanks{tdivyadevi@gmail.com} \quad  and \quad I. Jeyaraman\thanks{jeyaraman@nitt.edu}\\\\
	Department of Mathematics\\
	National Institute of 
	Technology		Tiruchirappalli-620 015,  India 
}
\date{}
\begin{document}
\maketitle

\begin{abstract}
\noindent	The eccentricity matrix of a simple connected graph 
$G$ is obtained from the distance matrix of $G$ 
by retaining the largest non-zero distance in each row and column, and 
the remaining entries are defined to be zero. A bi-block graph is a simple  
connected 
graph whose blocks are all complete bipartite graphs with  possibly 
different orders. In this paper, we study the eccentricity matrices of a 
subclass $\mathscr{B}$  (which includes trees) of bi-block graphs. We first find the 
inertia of the eccentricity matrices of graphs in $\mathscr{B}$, and thereby  
we  
characterize graphs in $\mathscr{B}$ with odd diameters. Precisely, if $G\in \mathscr{B}$ with diameter of $G$ greater than three, then we show 
that 
the eigenvalues of the eccentricity matrix of $G $ are symmetric 
with respect to the origin if and only if  the diameter of $G$ is 
odd. Further, we 
prove that the eccentricity matrices of  graphs in $\mathscr{B}$ 
are irreducible.
\end{abstract}

{\bf AMS Subject Classification (2010):} 05C12, 05C50.

\smallskip
\textbf{Keywords.}  Eccentricity matrix,  bi-block graph, inertia, spectral symmetry.

\section{Introduction}\label{sec1}

Let $G$ be a simple connected graph on $n$ vertices with the vertex set 
$V_G=\{v_1, v_2,\ldots , v_n\}$. 
In literature, many properties and structural characteristics of $G$ have been 
studied by associating several matrices corresponding to $G$, see 
\cite{Bapa2010}.
Let us recall some of them which are pertinent to our discussion 
here. The \textit{adjacency matrix} of $G$, denoted by
$A(G)$, is an $n \times n$ matrix whose $(i,j)$-th entry is one if
$v_i$ and $v_j$ are adjacent and zero elsewhere.  Let $d(v_i,v_j)$ denote the 
length of a shortest path between the vertices $v_i$ and $v_j$.
The \textit{distance 
	matrix} $D(G):=(d_{ij})$ of $G$, is an $n \times n$ matrix 
such that  
$d_{ij}=d(v_i,v_j)$ for all $i$ and $j$. 
Clearly, the 	adjacency matrix can be obtained 
from the 
distance matrix by retaining  the smallest non-zero distance (which is equal 	
to one) in each row and column and by setting the 	rest of the  entries equal
to
zero. Inspired by this, 
Randi\'{c} \cite{Rand2013} associated a new  matrix corresponding to $G$, 
namely
$D_{\text{MAX}}$,  which is derived from the distance 
matrix $D(G)$ by keeping the largest non-zero distance in each row and each 
column of $D(G)$
and defining the remaining  entries equal to zero. Later, Wang et al. \cite{WLBR2018} obtained an equivalent definition of 
$D_{\text{MAX}}$ using the  notion of  
eccentricity of a vertex and called it  the eccentricity matrix. Recall that
the \textit{eccentricity} of a vertex $v_i$ is defined by
$e(v_i)$= max $\{d(v_i,v_j): v_j\in V_G\}$.
The \textit{eccentricity matrix} $E(G)$ of $G$ is an $n \times n$ 
matrix whose $(i,j)$-th entry is given by
\begin{eqnarray*}\label{Eccentricty matrix defintion}
	E (G)_{ij}= \begin{cases}  d(v_i,v_j)   & \text{if } 
		d(v_i,v_j)=\text{min}\{e(v_i),e(v_j)\},\\
		0 & \text{otherwise}.
	\end{cases}
\end{eqnarray*}
This matrix has been well studied in the literature (see 
\cite{LiWB2022,MaKa2022,MGKA2020,MGRA2021,PaSP2021,Rand2013,WLBR2018,WLWL2020,WLLB2020})
and has applications in Chemistry, see 	\cite{Rand2013,WLBR2018, WLWL2020}.

It is significant to note that
the 
adjacency  and  distance matrices of connected graphs are always 
irreducible, 
but the eccentricity matrix fails to satisfy this property for a general 
connected 
graph.  For example, the eccentricity matrix of a complete bipartite graph $K_{l,m}$  is reducible for all $l,m\geq 2$, see  \cite{WLBR2018}.  However, for some classes of graphs, the associated
eccentricity matrices are irreducible. It is proved in  
\cite{WLBR2018} that the 
eccentricity matrix  of a tree with at least two vertices 
is 
irreducible, and an alternative proof of this result is given in 
\cite{MGKA2020}. This result has been extended to some larger classes of graphs 
in  \cite{LiWB2022} and \cite{WLLB2020}. It is shown in \cite{PaSP2021} that the eccentricity 
matrices of 
the coalescence of complete graphs are irreducible.
Characterizing the graphs whose  eccentricity matrices 
are irreducible, posed by Wang et al. \cite{WLBR2018},  remains an open problem. 
In this paper, we provide  a class of bi-block graphs 
$\mathscr{B}$, defined below,  whose
eccentricity matrices are irreducible.

To introduce the graph class $\mathscr{B}$, let us recall the following
definitions. 
A \textit{block} of a
graph $G$ is a maximal connected subgraph of $G$ which has no cut-vertex. 
A 
connected graph  $G$ is called a \textit{bi-block graph} 
(\textit{block 
	graph}) if all its blocks are  complete bipartite
graphs (respectively, complete graphs) of	possibly varying orders.   Note that the complete bipartite graphs are  bi-block graphs with exactly one 	block.  Since the spectral properties of the 
eccentricity matrices of complete  bipartite graphs are already explored in \cite{MGKA2020}, it 
is sufficient to consider  the   bi-block graphs  with at least two blocks. The main focus of this article is to analyze the spectral symmetry of the eccentricity matrices of   bi-block graphs. In Section \ref{inertia and spectral symmetry}, it is shown that the spectral symmetry result is not true for a  bi-block graph having more than two cut-vertices in a block. In view of these, we define the class $\mathscr{B}$ which is  the collection of all bi-block graphs  with at least two blocks and at most two cut-vertices in each block. Clearly, $\mathscr{B}$ contains all trees with at least three vertices.

In what follows, we present a brief survey of the literature 
on bi-block graphs. In \cite{DaMo2021}, the authors studied the spectral 
radius 	of the  	adjacency matrix of a  class of bi-block graphs  with a 	given 
independence number. The permanent, determinant, and the rank of the 
adjacency matrices of  bi-block graphs were computed in \cite{Sing2020}.
The inverse formula given for the distance matrix of a 
tree was extended to 
a 	subclass of bi-block graphs in \cite{HoSu2016}. 	Inspired by 
these, in this 
article, we 
study the eccentricity matrices of the subclass $\mathscr{B}$ of bi-block graphs.

A primary motivation for studying 
this paper	comes from the  following spectral symmetry results.
In related to this, we first recall a well-known characterization for  bipartite graphs. A graph $G$ is a bipartite graph if and only if 
the 	eigenvalues of the adjacency matrix $A(G)$ are 
symmetric with respect to the origin (i.e., if $\mu$ is an eigenvalue of 
$A(G)$ with multiplicity $l$ then -$\mu$ is also an eigenvalue of $A(G)$ with multiplicity $l$).	In \cite{MaKa2022}, the authors 
studied a similar equivalence for the eccentricity matrices of trees and 
gave a characterization for trees of odd diameters.
Specifically, 
the eigenvalues of  $E(T)$ of a tree $T$  are symmetric with respect to the 
origin if and only if $\diam(T)$ is odd.  An analogous result has been 
proved for a  
subclass of 
block graphs (namely, clique 
trees) in  \cite{LiWB2022}.
The 
problem of characterizing the graphs whose eigenvalues of the eccentricity matrices are symmetric with respect to the origin, posed in \cite{MaKa2022}, remains open. 
Motivated 	by these, in this article, we consider the spectral symmetry equivalence for the eccentricity matrices of  graphs in $\mathscr{B}$. 
Precisely, we prove that the eigenvalues of $E(G)$ are symmetric with respect to the origin if and only if $\diam(G)$ is odd, whenever $G\in \mathscr{B}$ with $\diam(G)\geq 4$.
By means of examples, we show that this equivalence does not hold for a general bi-block graph and for $G\in \mathscr{B}$ with $\diam(G)=3$.

To present another motivation for 
this paper, we now turn our attention to the inertias of  graph matrices (i.e., 
matrices that arise from graphs). Let us recall the notion of inertia.
Let $A$ be an $n \times n$ real symmetric matrix and let $i_+(A)$, 
$i_-(A)$  and  $i_0(A)$  denote the number of positive, 
negative and 
zero eigenvalues of $A$, respectively, including the multiplicities. The
\textit{inertia} of 
$A$ is the ordered triple $(i_+(A), i_{-}(A), i_0(A))$, and is denoted 
by 
$\In(A)$. It is  known that 
$i_+(A)+i_{-}(A)= \rank (A)$. An interesting and challenging problem in spectral graph theory 
is finding the eigenvalues and  inertias of  graph
matrices, see 
\cite{Bapa2010,BaMK2021,MGKA2020,MGRA2021,PaSP2021,WLBR2018,WLWL2020,WLLB2020} 
and the 
references  therein.
Among other results, it has been proved that the  inertia of the 
distance matrix of a tree on $n(\geq2)$ vertices is $(1,n-1,0)$, see 
\cite{Bapa2010}.
The  inertias of the eccentricity matrices of 
lollipop graphs,  coalescence of complete graphs,  coalescence of 
two cycles, trees, and clique trees have been computed 
in 
\cite{LiWB2022,MaKa2022,MGKA2020,PaSP2021}.
One of the objectives of this article is to  find the inertias 
of the eccentricity 	matrices of the graphs in $\mathscr{B}$. Some of the 
results obtained and the ideas used in this paper are similar to those in 
\cite{LiWB2022,MaKa2022} but the proofs  are different in many cases.

The outline of this article is as follows. In the next section, we recall 
some  results and notation used in this article. In Section 
\ref{tree}, 
we associate a tree $T_G$ for each $G\in \mathscr{B}$ and study their 
relationships where $\mathscr{B}$ is a subclass of bi-block graphs. Using 
these results, we derive the  centers of graphs in 
$\mathscr{B}$. Section 
\ref{inertia and spectral symmetry} deals 
with the inertia and the spectral symmetry of eccentricity matrix of $G\in 
\mathscr{B}$.  
Among other things, we give  an equivalent condition for the spectrum of 
$E(G)$  to be symmetric with respect to the origin where $G\in \mathscr{B}$.
Finally, we prove the irreducibility of  the eccentricity matrices 
of  graphs in $\mathscr{B}$.

\section{Preliminaries}\label{Prilims}
In this section, we recall  basic definitions and  notation that will 
be used in the sequel.

We 
denote the vertex set and the edge set of a graph $G$ by $V_G$ and $E_G$, 
respectively. 
A graph $G^{\prime}=(V_{G^{\prime}},E_{G^{\prime}})$ is said to be a 
\textit{subgraph} of 
$G$ if $V_{G^{\prime}}\subseteq V_G$ and $E_{G^{\prime}}\subseteq E_G$. A 
subgraph 
$G^{\prime}$ of $G$ is called an \textit{induced subgraph} of $G$ if 
the edges of $G^{\prime}$ are precisely the edges of $G$ whose 
ends are in $V_{G^{\prime}}$.
We 
denote the induced subgraph $G^{\prime}$  by  $G[V_{G^{\prime}}]$ and call it 
as the subgraph induced by  $V_{G^{\prime}}$.
A vertex $v\in V_G$ is said to be a \textit{cut-vertex} of $G$ if 
$G\setminus\{v\}$ is a disconnected graph. The notation $C_G$ stands for the 
collection of all cut-vertices of $G$.
A maximal connected subgraph of a graph $G$ is known as a \textit{component} of 
$G$.
The \textit{radius} and the 
\textit{diameter} of $G$ are, respectively, denoted by $\rad(G)$ and 
$\diam(G)$, 
and are defined by $\rad(G):=\min\{e(v): v\in V_G\}$ and $\diam(G):=\max\{e(v): 
v\in 
V_G\}$. A \textit{path} $P$ in $G$ is a subgraph of $G$ whose vertices 
are arranged in a sequence such that two vertices are adjacent in $P$ if and 
only if 
they are consecutive in the sequence. We denote a path of length $k$ between 
two vertices $a$ and $b$ in $G$ by $P_G(a,b)=au_1u_2\ldots u_{k-1}b$, where 
$u_i\in V_G$ for all $i$. We write the length of the path $P_G(a,b)$ by 
$l\left(P_G(a,b)\right)$.
A \textit{diametrical path} in $G$ is a 
shortest path between two vertices $u$ and $v$ such that
$d(u,v) = \diam(G)$. A vertex $v\in V_G$ is said to be a \textit{ central 
	vertex} if $e(v)=\rad(G)$. The \textit{center} of  $G$, denoted by 
$C(G)$, is the collection  of all central vertices of 
$G$.

A graph $G$ is said to be \textit{bipartite} if 
$V_G$ can be partitioned into two non-empty subsets $V_1$ and $V_2$ such 
that each edge of $G$ has one end in $V_1$ and the other end in $V_2$.
The pair $(V_1,V_2)$ is called a \textit{bipartition} of the bipartite 
graph $G$ and the sets $V_1$ and $V_2$ are referred to as the 
\textit{partite sets} of $G$.  A  bipartite graph  with  bipartition  
$(V_1,V_2)$ 
is 	said to be a complete bipartite graph if every vertex of $V_1$ is 
adjacent to all 
the vertices of $V_2$, and is denoted by $K_{\lvert V_1\rvert,\lvert V_2\rvert}$ where $\lvert V_1\rvert$ 
stands for  the
cardinality of  $V_1$.
For more details on  
graph-theoretic 
notions and terminologies, we refer to the book \cite{BaRa2012}.

Let $G\in \mathscr{B}$ and let $B$ be a block of $G$. Then $B$ is a complete 
bipartite 
graph $K_{l,m}$. Throughout 
this article, we assume that $\left(V_1(B), V_2(B)\right)$ is the bipartition 
of    $B$. The block $B$ of $G$ is said to be a \textit{bridge block} if 
$\lvert V_B\cap C_G\rvert=2$, and a
\textit{leaf block } if $\lvert V_B\cap C_G \rvert=1$.

Let $A$ be an $m\times n$  matrix. We write the transpose of $A$, $i$-th row of $A$, $i$-th column of $A$ and the rank 
of $A$ by $A^{\prime}$, $A_{i*}$, $A_{*i}$ and $\rank(A)$,
respectively. 
We denote the  
principal submatrix of $A$ whose rows and 
columns are  indexed, respectively,  by the sets $U\subseteq\{1,2,\ldots,m\}$ and 
$V\subseteq\{1,2,\ldots,n\}$ by $A\left([U\mid V]\right)$. 
The notations $J$ and $O$ 
are used to denote the matrices with all elements equal to $1$ and $0$, 
respectively, and the orders of the matrices are clear from the context. The 
determinant of a square matrix $A$ is written by $\det(A)$.

In the following, we collect  some known 
results which are needed in this paper.
\begin{theorem}[{\cite{Zhan2011}}]\label{Schurcomplement_det_formula}
	Let  $A$ and $D$ be $r\times r$ and $s\times  s$ real matrices, respectively, 
	and 
	let $M=\left(\begin{smallmatrix}
		A & B\\[3pt]
		C & D\\
	\end{smallmatrix}\right)$ be 
	a symmetric partitioned matrix of order $n$. If $A$ is nonsingular, then 
	\begin{itemize}
		\item [(i)] $ \det(M)= \det (A)\det(D-CA^{-1}B)$.
		\item [(ii)] $ \In (M)= \In(A)+\In(D-CA^{-1}B)$.
	\end{itemize}
\end{theorem}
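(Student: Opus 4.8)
The plan is to derive both parts from a single block factorization of $M$. Writing $S := D - CA^{-1}B$ for the Schur complement of $A$ in $M$, I would first verify by direct block multiplication the identity
\begin{equation*}
	M = \begin{pmatrix} I & O \\ CA^{-1} & I \end{pmatrix} \begin{pmatrix} A & O \\ O & S \end{pmatrix} \begin{pmatrix} I & A^{-1}B \\ O & I \end{pmatrix},
\end{equation*}
which makes sense precisely because $A$ is nonsingular (so that $A^{-1}$ exists). This is the block analogue of Gaussian elimination, and carrying out the two matrix products confirms that the right-hand side collapses to $\left(\begin{smallmatrix} A & B \\ C & D \end{smallmatrix}\right)$, the cancellation $CA^{-1}B + S = D$ being the whole point.

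For part (i), I would take determinants on both sides. Each of the two triangular factors is unitriangular and hence has determinant $1$, while the determinant of a block-diagonal matrix is the product of the determinants of its diagonal blocks. This immediately gives $\det(M) = \det(A)\det(S) = \det(A)\det(D - CA^{-1}B)$.

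For part (ii), the key is to exploit the symmetry of $M$ to upgrade the above one-sided factorization into a congruence. Since $M$ is symmetric, $A$ and $D$ are symmetric and $C = B'$; as $A$ is symmetric and nonsingular, $A^{-1}$ is symmetric, so $CA^{-1} = B'A^{-1} = (A^{-1}B)'$. Hence the left triangular factor is exactly the transpose of the right one, and the displayed identity reads $M = P' \, \mathrm{diag}(A,S)\, P$ with $P = \left(\begin{smallmatrix} I & A^{-1}B \\ O & I \end{smallmatrix}\right)$ nonsingular. One also checks that $S = D - B'A^{-1}B$ is itself symmetric, so $\mathrm{diag}(A,S)$ is a symmetric matrix congruent to $M$. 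By Sylvester's law of inertia, congruent real symmetric matrices have the same inertia, and the inertia of a block-diagonal symmetric matrix is the sum of the inertias of its diagonal blocks; therefore $\In(M) = \In(A) + \In(S) = \In(A) + \In(D - CA^{-1}B)$.

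The argument is short, and the only real subtlety lies in part (ii): Sylvester's law applies to a congruence $P'(\cdot)P$ rather than to a mere similarity, and it is exactly the symmetry hypothesis on $M$ (forcing $C = B'$ and $A^{-1} = (A^{-1})'$) that turns the generically one-sided LDU-type factorization into a genuine congruence. Provided the symmetry bookkeeping is kept straight, no serious obstacle arises.
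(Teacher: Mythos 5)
Your proof is correct and complete: the block factorization is verified correctly, the determinant argument for (i) is standard, and in (ii) you correctly identify that symmetry of $M$ turns the LDU factorization into a congruence $M = P'\,\mathrm{diag}(A,S)\,P$, after which Sylvester's law of inertia gives the additivity. Note that the paper itself offers no proof of this statement --- it is quoted as a known preliminary from the reference \cite{Zhan2011} --- and your argument is precisely the standard one found in such references, so there is nothing to contrast.
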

In fact, the above result holds for any non-singular principal submatrix $A$ of 
$M$.

\begin{theorem}[{\cite[P.25]{Zhan2011}}]\label{Co-eff_char_poly}
	If $M$ is an $n \times n$ real matrix, then the characteristic polynomial 
	of $M$ is given by
	$\chi(x)=x^n-\delta_1{x}^{n-1}+\delta_2{x}^{n-2}+\ldots
	+(-1)^{n-1}\delta_{n-1}x+ (-1)^{n}\delta_{n}$
	where $\delta_r$ denotes the sum of all principal minors of order $r$ for 
	all $r=1,2,\ldots,n$.
\end{theorem}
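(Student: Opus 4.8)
The plan is to prove this classical identity directly from the definition $\chi(x) = \det(xI_n - M)$, exploiting the multilinearity of the determinant in its columns. Writing the $j$-th column of $xI_n - M$ as $x\mathbf{e}_j - M_{*j}$, where $\mathbf{e}_j$ denotes the $j$-th standard basis vector, I would expand $\det(xI_n - M)$ as a sum of $2^n$ determinants, one for each way of selecting, independently in each column, either the term $x\mathbf{e}_j$ or the term $-M_{*j}$.

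I would then index these $2^n$ terms by the set $S\subseteq\{1,2,\ldots,n\}$ of columns in which the term $x\mathbf{e}_j$ is chosen, so that $-M_{*j}$ is chosen precisely for $j\notin S$. Factoring the scalar $x$ out of each of the $|S|$ chosen basis-vector columns produces $x^{|S|}$ times the determinant of the matrix whose columns are $\mathbf{e}_j$ for $j\in S$ and $-M_{*j}$ for $j\notin S$. To evaluate this determinant, I would reorder the indices so that those in $S$ come first; since the same permutation is applied to rows and columns, the determinant is unchanged, and the reordered matrix is block upper-triangular with an identity block $I_{|S|}$ in the top-left corner and the submatrix $(-M)([\,\overline{S}\mid\overline{S}\,])$ in the bottom-right, where $\overline{S}$ is the complement of $S$. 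Hence this term equals $x^{|S|}\det\!\big((-M)([\,\overline{S}\mid\overline{S}\,])\big) = (-1)^{|\overline{S}|}x^{|S|}\det\!\big(M([\,\overline{S}\mid\overline{S}\,])\big)$.

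Finally, I would group the terms according to $r := |\overline{S}|$, so that $|S| = n-r$. Summing over all subsets whose complement has size $r$ turns the aggregate of the relevant principal minors into $\delta_r$ (with $\delta_0 = 1$ coming from $S = \{1,\ldots,n\}$), giving $\chi(x) = \sum_{r=0}^{n}(-1)^r\delta_r x^{n-r}$, which is exactly the asserted expansion. The one place that warrants care is the reduction of each term to a principal minor of $-M$ via the simultaneous row-column permutation and the resulting block-triangular form, together with tracking the sign $(-1)^{|\overline{S}|}$; once that bookkeeping is in place, the remaining steps are routine multilinearity and reindexing, so I do not anticipate a substantial obstacle.
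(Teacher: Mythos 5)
Your proposal is correct. There is, however, no in-paper argument to compare it against: Theorem \ref{Co-eff_char_poly} is stated as a known preliminary and cited to \cite{Zhan2011} without proof, so what you have done is supply the missing (classical) derivation rather than diverge from the paper's route. Your argument is the standard one and is complete: multilinearity in the columns expands $\det(xI_n-M)$ into $2^n$ terms indexed by the set $S$ of columns where $x\mathbf{e}_j$ is chosen; factoring out $x$ from those columns gives $x^{|S|}$; the simultaneous row--column permutation leaves the determinant unchanged and produces a block upper-triangular matrix with top-left block $I_{|S|}$ and bottom-right block equal to the principal submatrix of $-M$ indexed by $\overline{S}$, so each term equals $(-1)^{|\overline{S}|}x^{|S|}\det\left(M([\,\overline{S}\mid\overline{S}\,])\right)$; and regrouping by $r=|\overline{S}|$ gives $\chi(x)=\sum_{r=0}^{n}(-1)^{r}\delta_r x^{n-r}$ with $\delta_0=1$, which is exactly the alternating-sign pattern $x^n-\delta_1x^{n-1}+\delta_2x^{n-2}-\cdots+(-1)^n\delta_n$ asserted in the statement. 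The two points you flagged as needing care (the block-triangular reduction under a simultaneous permutation of rows and columns, and the sign $(-1)^{|\overline{S}|}$ pulled out of the order-$|\overline{S}|$ minor of $-M$) are both handled correctly, so the proof stands as written; an equally common alternative, for comparison, evaluates each of the $2^n$ terms by repeated Laplace expansion along the columns $\mathbf{e}_j$, $j\in S$, which avoids the permutation entirely but is otherwise the same computation.
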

\begin{theorem}[\hspace{1sp}\cite{Zhan2011}]\label{Interlacing theorem}
	Let $M$ be a symmetric matrix of order $n$ and $A$ be a  principal submatrix of 
	$M$ order $m$ where $1\leq m \leq n$. If the 
	eigenvalues of $M$ and $A$ are
	$\lambda_1\geq \lambda_2\geq \cdots \geq\lambda_n$ and $\beta_1\geq 
	\beta_2\geq \cdots \geq
	\beta_m$, respectively, then
	$\lambda_i\geq \beta_i \geq \lambda_{n-m+i},~\text{for all 
		$i=1,2,\ldots,m$}$.
Moreover, $	i_+(M)\geq i_+(A)~ \text{and}~i_-(M)\geq i_-(A).$
\end{theorem}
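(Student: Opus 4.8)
The plan is to derive both the interlacing inequalities and the inertia inequalities by the classical dimension-counting (Courant–Fischer) argument, working with the Rayleigh quotient and the spectral theorem. Since $A$ is a principal submatrix of $M$, say $A = M([U \mid U])$ with $\lvert U\rvert = m$, a simultaneous permutation of the rows and columns of $M$ carrying $U$ onto $\{1,2,\ldots,m\}$ is a permutation similarity, and hence leaves the spectra of both $M$ and $A$ unchanged. Thus I may assume $M = \left(\begin{smallmatrix} A & B \\ B^{\prime} & C \end{smallmatrix}\right)$ with $A$ in the leading block. The central device is the norm-preserving embedding $\iota \colon \mathbb{R}^m \to \mathbb{R}^n$, $\iota(x) = \left(\begin{smallmatrix} x \\ 0 \end{smallmatrix}\right)$, which satisfies $\iota(x)^{\prime} M \iota(x) = x^{\prime} A x$; it therefore carries any $k$-dimensional subspace of $\mathbb{R}^m$ to a $k$-dimensional subspace of $\mathbb{R}^n$ on which the Rayleigh quotient of $M$ coincides with that of $A$.

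Since $M$ and $A$ are real symmetric, fix orthonormal eigenbases $v_1, \ldots, v_n$ of $M$ and $w_1, \ldots, w_m$ of $A$, ordered by decreasing eigenvalue. To obtain $\lambda_i \geq \beta_i$, I would set $W = \iota(\mathrm{span}\{w_1, \ldots, w_i\})$ and $U = \mathrm{span}\{v_i, v_{i+1}, \ldots, v_n\}$. Every nonzero vector of $W$ has $M$-Rayleigh quotient at least $\beta_i$ (because on $\mathrm{span}\{w_1, \ldots, w_i\}$ the $A$-quotient is), while every nonzero vector of $U$ has $M$-quotient at most $\lambda_i$. As $\dim W + \dim U = i + (n-i+1) = n+1 > n$, the two subspaces share a nonzero vector $z$, and evaluating the quotient at $z$ gives $\beta_i \leq \lambda_i$. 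The companion bound $\beta_i \geq \lambda_{n-m+i}$ follows symmetrically from $W' = \iota(\mathrm{span}\{w_i, \ldots, w_m\})$ and $U' = \mathrm{span}\{v_1, \ldots, v_{n-m+i}\}$, whose dimensions sum to $(m-i+1)+(n-m+i) = n+1$; on $W'$ the quotient is at most $\beta_i$ and on $U'$ at least $\lambda_{n-m+i}$, so their common nonzero vector yields $\lambda_{n-m+i} \leq \beta_i$.

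With the interlacing inequalities in hand, I would read off the inertia statement directly. If $i_+(A) = k$, then $\beta_1, \ldots, \beta_k > 0$, so $\lambda_i \geq \beta_i > 0$ for $i \leq k$ exhibits $k$ positive eigenvalues of $M$, giving $i_+(M) \geq i_+(A)$. Dually, if $i_-(A) = l$, then $\beta_{m-l+1}, \ldots, \beta_m < 0$, and the bound $\lambda_{n-m+i} \leq \beta_i$ for $i \geq m-l+1$ produces $l$ negative eigenvalues of $M$, namely $\lambda_{n-l+1}, \ldots, \lambda_n$; hence $i_-(M) \geq i_-(A)$.

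The argument involves no serious computation. The only points demanding care are the bookkeeping of the index shift $n-m+i$, so that the two halves of the inequality align and each dimension count comes out to exactly $n+1$, and the verification that the embedding $\iota$ simultaneously preserves dimension and matches the two Rayleigh quotients. Once these are pinned down, the intersection argument and the inertia deductions follow mechanically, which is why I expect the setup rather than any individual step to be the main obstacle.
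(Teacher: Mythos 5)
Your proof is correct. Note that the paper does not prove this statement at all: it is quoted as a known result from the reference \cite{Zhan2011}, and your Courant--Fischer dimension-counting argument (embed $\mathbb{R}^m$ into $\mathbb{R}^n$, intersect the span of the top $i$ eigenvectors of $A$ with the span of the bottom $n-i+1$ eigenvectors of $M$, and dually for the lower bound) is precisely the standard proof of Cauchy interlacing found there; your deduction of $i_+(M)\geq i_+(A)$ and $i_-(M)\geq i_-(A)$ from the two interlacing inequalities is also correct, so there is nothing to fix.
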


\begin{lemma}[see 
\cite{Bapa2010,MaKa2022}]\label{symmetric_about_origin_equiv_lemma}
Suppose that $p(x) = 
{x}^n+\alpha_1{x}^{n-1}+\alpha_2{x}^{n-2}+\hdots+\alpha_{n-1}x+\alpha_n$ 
is a  polynomial whose roots are all non-zero real numbers. 
If there exists $i \in \{1, \dots, 	n-1\}$ such that  $\alpha_i \neq 0$ and 
$\alpha_{i+1}\neq0$, then the roots 
of $p(x)$ are not symmetric with respect to the origin (i.e., $p(a) 
=0$ but 
$p(-a)\neq 0$ for some real number $a$).
\end{lemma}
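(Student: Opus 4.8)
The plan is to prove the contrapositive: assuming the roots of $p(x)$ are symmetric with respect to the origin, I will show that $\alpha_k=0$ for every \emph{odd} index $k$. This immediately rules out two consecutive non-zero coefficients, since among any two consecutive indices $i,i+1$ exactly one is odd, so at least one of $\alpha_i,\alpha_{i+1}$ would have to vanish.

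First I would translate the symmetry hypothesis into a functional identity for $p$. Since every root of $p$ is a non-zero real number and symmetry means that the multiset of roots is invariant under $r\mapsto -r$ (multiplicities included), I would factor $p(x)=\prod_j (x-r_j)$ and compute $p(-x)=(-1)^n\prod_j(x+r_j)=(-1)^n\prod_j\bigl(x-(-r_j)\bigr)$. Because $\{-r_j\}$ coincides with $\{r_j\}$ as multisets, the product $\prod_j\bigl(x-(-r_j)\bigr)$ is exactly $p(x)$, which yields the key identity $p(-x)=(-1)^n p(x)$.

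Next I would expand both sides coefficientwise. Setting $\alpha_0=1$, we have $p(x)=\sum_{k=0}^n \alpha_k x^{n-k}$, hence $p(-x)=\sum_{k=0}^n (-1)^{n-k}\alpha_k x^{n-k}$, whereas $(-1)^n p(x)=\sum_{k=0}^n (-1)^n\alpha_k x^{n-k}$. Matching the coefficient of $x^{n-k}$ gives $(-1)^{n-k}\alpha_k=(-1)^n\alpha_k$ for every $k$, i.e. $\bigl((-1)^k-1\bigr)\alpha_k=0$. For odd $k$ the factor $(-1)^k-1=-2$ is non-zero, forcing $\alpha_k=0$, which completes the contrapositive and hence the lemma.

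The argument is short, and the only step requiring genuine care is the multiplicity bookkeeping in the passage from root-symmetry to the identity $p(-x)=(-1)^n p(x)$. I would make sure that \emph{symmetric with respect to the origin} is interpreted with multiplicities, so that the two factorizations of $p$ agree as polynomials rather than merely sharing the same set of roots; the hypothesis that all roots are non-zero guarantees that the involution $r\mapsto -r$ is fixed-point-free on the root multiset, so this matching is clean.
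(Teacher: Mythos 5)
Your proof is correct and complete. Note that there is nothing in the paper to compare it against: the lemma is quoted from the references \cite{Bapa2010,MaKa2022} and no proof is given in the text, so your argument stands on its own. The route you take --- multiset symmetry of the roots gives $p(-x)=(-1)^n p(x)$, and coefficient matching then kills every odd-indexed coefficient, hence no two consecutive coefficients can both be non-zero --- is the standard proof of this fact, carried out cleanly and with the contrapositive stated correctly.

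One point deserves emphasis: the care you take about multiplicities in your final paragraph is not merely cosmetic, it is the only reading under which the lemma is true. If \enquote{symmetric with respect to the origin} were interpreted at the level of root \emph{sets}, as the parenthetical gloss in the statement might suggest (for every root $a$, $-a$ is also a root), the lemma would be false: the polynomial $p(x)=(x-1)^2(x+1)=x^3-x^2-x+1$ has all roots non-zero and real, satisfies $\alpha_1=\alpha_2=-1\neq 0$, and yet every root $a$ has $p(-a)=0$. Your multiset interpretation is the one consistent with the paper's own definition of spectral symmetry in the introduction (if $\mu$ is an eigenvalue with multiplicity $l$, then $-\mu$ is an eigenvalue with multiplicity $l$), and it is exactly the form invoked in the proof of Theorem \ref{even_thm}$(iii)$, so your proof delivers precisely what the paper needs. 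A small simplification: fixed-point-freeness of $r\mapsto -r$ is not actually required to get $p(-x)=(-1)^n p(x)$; invariance of the root multiset alone suffices, whether or not $0$ occurs as a root. What the non-vanishing of the roots does buy, as a byproduct of your argument, is that for odd $n$ no such symmetric polynomial exists at all, since a fixed-point-free involution cannot act on a multiset of odd size.
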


\section{Relations between $G\in \mathscr{B}$ and its associated tree 
$T_G$}\label{tree}

In this section, we associate a tree $T_G$ for each graph  $G\in \mathscr{B}$ and 
obtain some interconnections between $G$ and $T_G$ by employing the
properties of trees. 
In particular, we  show that 
$\diam(G)=\diam(T_G)$ and  $C(T_G)\subseteq C(G)$. Making use of these 
relations, we explicitly find the centers  of  graphs in $\mathscr{B}$. These results  will 
be used in the next section to study the inertia and the spectral symmetry of the  eccentricity 
matrix of a  graph in $\mathscr{B}$.

The idea of analyzing a  graph  $G\in \mathscr{B}$ through the  associated  tree $T_G$ is motivated by \cite{LiWB2022,MaKa2022}. In \cite{LiWB2022}, the 
authors  studied  the eccentricity matrices of clique trees by constructing 
trees of particular types. While defining trees, they considered  
non-cut-vertices only from the leaf blocks of  clique trees. In our case, we 
have to include non-cut-vertices  from both  leaf blocks and bridge blocks of  
$G\in \mathscr{B}$ in order to obtain a tree satisfying specific properties and 
thus the construction is different from \cite{LiWB2022}. 

Let us begin this section by associating a subgraph $T_G$ for each $ 
G\in \mathscr{B}$. It will be shown later that $T_G$ is a tree.
We  collect some selected 
vertices 
from  each block $B$ of $G$ to define $T_G$. If $B= K_{1,1}$, then we collect both the vertices 
of $B$. Suppose that $B\neq K_{1,1}$ and  $B$ is a  bridge block  with $V_B\cap 
C_G=\{z_1,z_2\}$.  If both the 
cut-vertices $z_1$ and $z_2$ lie on the same partite set, say 
$V_1(B)$,  
then we choose three vertices from $V_B$, which are  $z_1$, $z_2$ and  
a  non-cut-vertex in  
$V_2(B)$ with the minimum vertex label; otherwise, we collect 
exactly two vertices  $z_1$ and  $z_2$ from $V_B$.
In the case of leaf block 
$B\neq K_{1,1}$, we 
collect exactly three vertices, which are the cut-vertex in $V_B$, and  
non-cut-vertices with the minimum vertex label in each partite sets 
$V_1(B)$ and $V_2(B)$. 
The graph $T_G$ is 
defined as the 
subgraph induced by the vertex set which is the union of all selected vertices in each block of $G$. The 
precise construction of $T_G$ is given below.

\begin{definition}
Let $ G\in \mathscr{B}$ and let $B$ be a block of $G$ with 
bipartition $(V_1(B), V_2(B))$. For $i =1, 2$, we denote the 
non-cut-vertex  with the minimum vertex	label in $V_i(B)$, if it exists, 
by $u_i$. If $B\neq  K_{1,1}$, we define
\begin{equation*}
	NC_B:= \begin{cases} 
		\{u_2\} & \text{if 	$\lvert V_1(B)\cap C_G\rvert =2$},\\
		\{u_1\} & \text{if $\lvert V_2(B)\cap C_G\rvert =2$}, \\
		\{u_1,u_2\}& \text{if $B$ is a leaf block},\\
		\emptyset& \text{otherwise}.
	\end{cases}
\end{equation*}
If  $B= K_{1,1}$, then we set $NC_B:=	\{u_1,u_2\}\cap V_B$.
Let $\displaystyle S:=\cup_{B}	NC_B$ 
where $B$ runs over all blocks  of 
$G$. 
Define $T_G$ as the subgraph induced by the vertex subset $S\cup C_G$. That is, 
\begin{equation}\label{T_G}
	T_G:=G\left[S\cup C_G\right].
\end{equation}
\end{definition}
\begin{remark}
We mention that the choice of a non-cut-vertex with the minimum vertex label 
ensures the  uniqueness of  $T_G$.
\end{remark}
\begin{example}\label{Illustrating_eg}
We illustrate  the construction of $T_G$ for a given graph $G\in 
\mathscr{B}$ through the following figures. The graph $G$ has three leaf blocks ($K_{1,1},K_{1,1}$ and $K_{2,3}$) and three bridge blocks ($K_{2,2},K_{1,1}$ and $K_{3,3}$).
\begin{figure}[ht]
	\begin{subfigure}[b]{0.5\linewidth}
		\centering
		\begin{tikzpicture}[scale=1]
			\draw[fill=black] (-3,2) circle (2pt);
			\draw[fill=black] (-3,1) circle (2pt);
			\draw[fill=black] (-2,2) circle (2pt);
			\draw[fill=black] (-2,1) circle (2pt);
			\draw[fill=black] (-1,2) circle (2pt);
			\draw[fill=black] (-1,1) circle (2pt);
			\draw[fill=black] (-1,0) circle (2pt);
			\draw[fill=black] (-1,-1) circle (2pt);
			\draw[fill=black] (0,2) circle (2pt);
			\draw[fill=black] (0,1) circle (2pt);
			\draw[fill=black] (0,-1) circle (2pt);
			\draw[fill=black] (0,-2) circle (2pt);
			\draw[fill=black] (1,2) circle (2pt);
			\draw[fill=black] (1,1) circle (2pt);
			\draw[fill=black] (1,0) circle (2pt);
			\draw[fill=black] (0,0) circle (2pt);

			\draw[thin] (-2,2)--(-3,1);
			\draw[thin] (-2,2)--(-3,2);
			
			\draw[thin] (-2,2)--(-1,2);
			\draw[thin] (-2,2)--(-1,1);
			\draw[thin] (-2,1)--(-1,2);
			\draw[thin] (-2,1)--(-1,1);
			
			\draw[thin] (0,2)--(1,2);
			\draw[thin] (0,2)--(1,1);
			\draw[thin] (0,2)--(1,0);
			\draw[thin] (0,1)--(1,2);
			\draw[thin] (0,1)--(1,1);
			\draw[thin] (0,1)--(1,0);
			\draw[thin] (0,0)--(1,2);
			\draw[thin] (0,0)--(1,1);
			\draw[thin] (0,0)--(1,0);
			
			\draw[thin] (-1,2)--(0,2);

			\draw[thin] (-1,0)--(0,0);
			\draw[thin] (-1,0)--(0,-1);
			\draw[thin] (-1,0)--(0,-2);
			\draw[thin] (-1,-1)--(0,0);
			\draw[thin] (-1,-1)--(0,-1);
			\draw[thin] (-1,-1)--(0,-2);
			
			\node at (-3,.7) {$v_2$};
			\node at (-2,.7) {$v_4$};
			\node at (-1,.7) {$v_6$};
			\node at (-1,1.7) {$v_5$};
			\node at (-2,1.7) {$v_3$};
			\node at (-3,1.7) {$v_1$};
			
			\node at (-.1,1.7) {$v_7$};
			\node at (1.2,1.7) {$v_{10}$};
			\node at (1.1,0.7) {$v_{11}$};
			\node at (1.1,-.3) {$v_{12}$};
			\node at (-.2,.8) {$v_{8}$};
			\node at (-.15,.2) {$v_{9}$};
			\node at (.35,-1) {$v_{15}$};
			\node at (-1.4,-1) {$v_{14}$};
			\node at (-1.4,0) {$v_{13}$};
			\node at (0.35,-2) {$v_{16}$};
		\end{tikzpicture}
		\caption{A graph $G\in \mathscr{B}$} 
\end{subfigure}
\begin{subfigure}[b]{0.5\linewidth}
	\centering
	\begin{tikzpicture}[scale=1]
		\draw[fill=black] (-3,2) circle (2pt);
		\draw[fill=black] (-3,1) circle (2pt);
		\draw[fill=black] (-2,2) circle (2pt);
		\draw[fill=black] (-1,2) circle (2pt);
		\draw[fill=black] (-1,0) circle (2pt);
		\draw[fill=black] (0,2) circle (2pt);
		\draw[fill=black] (0,-1) circle (2pt);
		\draw[fill=black] (1,2) circle (2pt);
		\draw[fill=black] (0,0) circle (2pt);

		\draw[thin]  (-2,2)--(-3,1);
		\draw[thin]  (-2,2)--(-3,2);
		
		\draw[thin] (-2,2)--(-1,2);
		
		\draw[thin] (0,2)--(1,2);
		\draw[thin] (0,0)--(1,2);

		\draw[thin]  (-1,2)--(0,2);

		\draw[thin] (-1,0)--(0,0);
		\draw[thin] (-1,0)--(0,-1);

		\node at (-3,.7) {$v_2$};
		\node at (-1,1.7) {$v_5$};
		\node at (-2,1.7) {$v_3$};
		\node at (-3,1.7) {$v_1$};
		
		\node at (-.1,1.7) {$v_7$};
		\node at (1.2,1.7) {$v_{10}$};
		\node at (-.15,.2) {$v_{9}$};
		\node at (.35,-1) {$v_{15}$};
		\node at (-1.4,0) {$v_{13}$};
		
	\end{tikzpicture}
	\caption{The tree $T_G$ associated with $G$} 
\end{subfigure}
\end{figure}
\end{example}
\vspace{-.65cm}
In the following, we  
mention a few properties of  $G\in \mathscr{B}$ that are needed in the 
sequel. For more details, we refer to  \cite{BaRa2012,Dies2010}.
\begin{itemize}
\item [$(P_1)$] Suppose that $B$ is a block of $G$ with bipartition 
$(V_1(B),V_2(B))$. Then $\lvert V_1(B)\rvert=1$ if and only if $\lvert V_2(B)\rvert =1$.
\item [$(P_2)$] Any shortest path in $G$ can contain at most three consecutive 
vertices from a single block $B$ of $G$.
\item [$(P_3)$] Cycles of $G$ are exactly cycles of its blocks. 
\item [$(P_4)$] Let $v\in V_G$. Then $v$ is a cut-vertex of $G$ if and only if 
it lies in at least two blocks of $G$.
\end{itemize}
Using the property $(P_4)$ and the fact that  each edge of $G$ lies on exactly 
one block of 
$G$  (P. $59$ in \cite{BaRa2012}), we obtain the following remark.
\begin{remark}\label{p6}
Let $G\in \mathscr{B}$ and   $B$ be a block of $G$. If $xy$ 
and 
$yz$ are edges in $G$ such that 
$x,y\in V_B$ and $z\not \in V_B$, then $y\in C_G$.
Moreover, if
$P_G(u_0,u_{k+1})=u_0u_1\ldots u_ku_{k+1}$ $(k\geq 2)$ is a shortest 
path between $u_0$ 
and $u_{k+1}$ in $G$ such that  the edge 
$u_{i-1}u_{i}$ 
belongs to the block $B$  for some $1 \leq 
i \leq k$  and $u_{i+1}\not \in V_B$, then $u_i\in 
C_G$. 
That is, a shortest path leaves a block $B$ of $G$ and enters into another 
block $B^{\prime}$ of $G$
through a cut-vertex of $G$ which lies in $ V_B\cap  V_{B^{\prime}}$. 
\end{remark}
Many results of this section deals  the relationship between 
$G\in \mathscr{B}$  and its associated graph $T_G$. We first show that the 
graph $T_G$ is a tree.
In order to distinguish, we use the notations $d_G(a,b)$ and  
$d_{T_G}(a,b)$, respectively, to denote the distance between the
vertices $a$ and $b$ 
with 
respect to 
$G$, and with 
respect to $T_G$.
\begin{lemma}\label{T_G tree}
Let $G\in \mathscr{B}$ and  $T_G$  be the subgraph of $G$ defined in 
(\ref{T_G}). Then $T_G$   is a tree and  $d_G(a,b)=d_{T_G}(a,b)$ for all $a,b 
\in V_{T_G}$.
\end{lemma}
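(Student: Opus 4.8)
The strategy is to prove the two assertions — that $T_G$ is a tree, and that $d_G(a,b)=d_{T_G}(a,b)$ for all $a,b\in V_{T_G}$ — in a way that makes them mutually reinforcing. First I would verify that $T_G$ is connected and acyclic, and only afterward establish the distance equality, since a natural proof of connectivity already exhibits the paths that realize the correct distances.

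\textbf{Step 1: $T_G$ is acyclic.}
By property $(P_3)$, every cycle of $G$ lies entirely within a single block $B$, and since $B=K_{l,m}$ is bipartite, the shortest such cycle has length $4$ and requires at least two vertices from each partite set $V_1(B)$ and $V_2(B)$. The plan is to inspect the definition of $NC_B$ case by case and observe that from any single block $B$ we retain at most three vertices in $S\cup C_G$, and crucially never two vertices from the same partite set together with two from the other. In the bridge-block cases we keep the two cut-vertices (which may share a partite set) plus at most one extra non-cut-vertex; in the leaf-block case we keep the unique cut-vertex plus one vertex from each side; for $K_{1,1}$ we keep both vertices (but $K_{1,1}$ contains no cycle). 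In every case the induced subgraph on the retained vertices of a single block contains no $4$-cycle, so by $(P_3)$ the graph $T_G$ has no cycles at all.

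\textbf{Step 2: $T_G$ is connected, with distances realized by shortest $G$-paths.}
Here I would show the stronger statement that for any $a,b\in V_{T_G}$, some shortest path $P_G(a,b)$ in $G$ lies entirely inside $T_G$; this simultaneously gives connectivity and the inequality $d_{T_G}(a,b)\le d_G(a,b)$. The key tool is Remark 3.1: a shortest path moves between blocks only through cut-vertices, and by $(P_2)$ it uses at most three consecutive vertices from any one block. All cut-vertices of $G$ belong to $S\cup C_G$ by construction, so the path's block-to-block transit points are all present in $T_G$. The remaining task is to argue that whatever vertices the path uses \emph{inside} a block can be rerouted, without changing the length, through the specific non-cut-vertices that $NC_B$ retains. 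Since a block is complete bipartite, any two chosen boundary vertices of the path (both cut-vertices, or a cut-vertex and an endpoint $a$ or $b$) can be joined within the block by a path of the same length using only the retained vertices — a distance of $2$ across the block goes through one retained intermediate non-cut-vertex of the opposite partite set, which is exactly what $NC_B$ supplies. I expect this rerouting argument to be the \textbf{main obstacle}, because it requires a careful case analysis matching each configuration of path-endpoints relative to the bipartition against the corresponding clause in the definition of $NC_B$, and one must check that a non-cut-vertex endpoint $a$ or $b$ of the overall path is itself the minimum-label vertex retained in its leaf or bridge block.

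\textbf{Step 3: the reverse inequality and conclusion.}
Since $T_G$ is a subgraph of $G$, every $T_G$-path is a $G$-path, so $d_G(a,b)\le d_{T_G}(a,b)$ trivially. Combined with Step 2 this yields $d_G(a,b)=d_{T_G}(a,b)$. Finally, a connected acyclic graph is a tree, so Steps 1 and 2 together establish that $T_G$ is a tree, completing the proof.
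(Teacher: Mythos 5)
Your proposal follows essentially the same route as the paper: acyclicity comes from the fact that $T_G$ keeps at most three vertices per block together with $(P_3)$, and connectivity plus $d_{T_G}(a,b)\le d_G(a,b)$ come from rerouting a shortest $G$-path through retained vertices of the same partite set inside a block (the paper does this iteratively, replacing the first vertex $u_{j_0}\not\in V_{T_G}$ by some $u_{j_0}^{\prime}\in V_1(B)\cap V_{T_G}$ and using the shortcut argument to check $u_{j_0}^{\prime}$ is not already on the path), with the reverse inequality trivial since $T_G$ is a subgraph. One small refinement: the replacement vertex need not come from $NC_B$ as you state — e.g.\ in a bridge block whose two cut-vertices lie in the partite set of the vertex being replaced, the reroute goes through one of those cut-vertices — so the fact to record is that every partite set of every block meets $V_{T_G}$, which is exactly what the paper uses.
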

\vspace{-.5cm}
\begin{proof}
Note that each block of $G$ is a complete bipartite graph.
Since $T_G$ has at most three vertices from each block of $G$, by 
$(P_3)$,  $T_G$ 
contains no cycle. We claim that $T_G$ is connected.
Let  $a,b \in V_{T_G}$. Since $G$ is connected, there is a path
between the vertices $a$ and $b$ in $G$. Let
$P_G(a,b)=au_1u_2\ldots u_{k}b$ be a shortest path in 
$G$. If $u_i \in V_{T_G}$ for all 
$i=1,2,\ldots,k$ then the claim follows. 
Suppose that $u_j \not \in V_{T_G}$ for some $j \in \{1,2,\ldots,k\}$. Let 
$j_0$ be the smallest index such that $u_{j_0} \not \in V_{T_G}$. Then  
$u_{j_0}$ is not a cut-vertex of $G$ because all the cut-vertices of $G$ 
belong to $T_G$. Assume that the edge $u_{j_0-1}u_{j_0}$ belongs to the 
block $B$ of $G$. If $j_0=1$ then take  $u_0=a$. Without loss of generality, we
assume that $u_{j_0} \in V_1(B)$. Then $u_{j_0-1} \in V_2(B)$. Since 
$u_{j_0}\not \in C_G$, by Remark \ref{p6},  $ u_{j_0+1}\in 
V_2(B)$  where we take $u_{j_0+1}=b$ if $j_0=k$.
By the construction of $T_G$,  it is clear that $ V_1(B)\cap V_{T_G}\neq 
\emptyset$. Choose $u_{j_0}^{\prime} \in 
V_1(B)\cap V_{T_G}$. We now show that $u_{j_0}^{\prime} \not \in P_G(a,b)$. 
Clearly, $u_{j_0}^{\prime} \neq u_{j_0-1}$ and   $u_{j_0}^{\prime} \neq 
u_{j_0+1}$. If 
there exists $i\in 
\{1,2,\ldots,j_0-2\}$ such that    $u_{j_0}^{\prime} = u_i$ then $au_1\ldots 
u_{i}u_{j_0+1}\ldots b$ is a path between $a$ and $b$ whose length is 
strictly less than $l\left(P_G(a,b)\right)$ which is not possible. Therefore
$u_{j_0}^{\prime} \neq u_i$ for 
all $i\in 
\{1,2,\ldots,j_0-2\}$. Similarly, we see that  $u_{j_0}^{\prime} \neq u_i$ for 
all $i\in  \{j_0+2, \ldots,k\}$. Hence $u_{j_0}^{\prime} \not \in P_G(a,b)$. 
We obtain a new path
$P^{\prime}_G(a,b)$ from  $P_G(a,b)$ 
by replacing the edges $u_{j_0-1}u_{j_0}$ and $u_{j_0}u_{j_0+1}$, 
respectively, by $u_{j_0-1}u_{j_0}^{\prime}$ and $u_{j_0}^{\prime}u_{j_0+1}$. 
Clearly, $l( P^{\prime}_G(a,b))=l( P_G(a,b))$. 
We replace $P_G(a,b)$ by $P^{\prime}_G(a,b)$. 
Repeating the above argument leads to obtain 
a path $ P_{T_G}(a,b)$ in $T_G$ such that
$l(P_{T_G}(a,b))=l( P_G(a,b))$. This implies that $T_G$ is connected and 
$d_{T_G}(a,b)\leq l(P_{T_G}(a,b))=l( P_G(a,b))=d_G(a,b)$. Since $T_G$ is a 
subgraph of $G$, we have $d_{T_G}(a,b)\geq d_G(a,b)$. Hence  $d_{T_G}(a,b)= 
d_G(a,b)$. 
\end{proof}
The following lemma is useful in establishing the result $\diam(G)=\diam(T_G)$. For  notational simplicity, we also use the notation  $d(a,b)$ to denote the 
distance 
between two vertices $a$ and $b$ in $G$ instead of $d_{G}(a,b)$.
\begin{lemma}\label{distance same}
Let  $G\in \mathscr{B}$. Given  $a$ and $b$ in $V_G$, there exist  
$a^{\prime}$ and $b^{\prime}$ in  $V_{T_G}$ such that 
$d(a,b)=d(a^{\prime},b^{\prime})$.
\end{lemma}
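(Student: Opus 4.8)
The plan is to realize $d(a,b)$ inside $T_G$ by moving the two endpoints separately, using that $T_G$ is an isometric subtree of $G$ by Lemma \ref{T_G tree}; thus it suffices to produce $a',b'\in V_{T_G}$ with $d(a',b')=d(a,b)$. Cut-vertices already lie in $V_{T_G}$, so only non-cut endpoints must be moved. First I would settle the case $d(a,b)\le 2$: since $G$ has at least two blocks it has a cut-vertex $c$, and every block containing $c$ contributes to $T_G$ a neighbour of $c$ (which one checks directly from the definition of $NC_B$, block type by block type). Picking such neighbours in two distinct blocks of $c$ gives a path of length two in $T_G$, so $T_G$ is a tree on at least three vertices and already realizes each of the distances $0,1,2$. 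Hence I may assume $d(a,b)\ge 3$, which forces $a$ and $b$ to lie in different blocks; in particular, if $a$ is a non-cut-vertex of a block $B$ then $b\notin V_B$, because any two vertices of a single complete bipartite block are at distance at most two.

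The heart of the proof is a one-endpoint replacement: for a non-cut-vertex $a\in V_i(B)$ and any $b\notin V_B$, there is $a''\in V_{T_G}$ with $d(a'',b)=d(a,b)$. To obtain this I would first record, using Remark \ref{p6} and property $(P_3)$ (so that distances inside a block agree with $d_G$ and each vertex outside $B$ is reached through a single cut-vertex of $B$), the formula $d(a,b)=d_B(a,w^{*})+d(w^{*},b)$, where $w^{*}$ is the cut-vertex of $B$ on the $b$-side and $d_B(a,w^{*})\in\{1,2\}$ equals $1$ when $a$ and $w^{*}$ lie in opposite partite sets and $2$ otherwise. A consequence is that any two non-cut-vertices of $B$ in the same partite set are equidistant from every vertex outside $B$. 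Hence when $B$ is a leaf block (or $K_{1,1}$) I can take $a''=u_i\in V_{T_G}\cap V_i(B)$, and when $B$ is a bridge block I can often take $a''$ to be the cut-vertex of $B$ lying on the far side of $w^{*}$; for instance, if both cut-vertices $z_1,z_2$ of $B$ lie in one partite set and $b$ is reached through $z_1$, then $d_B(a,w^{*})=2$ and $d(z_2,b)=d(z_2,z_1)+d(z_1,b)=2+d(z_1,b)=d(a,b)$, so $a''=z_2$ works.

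The step I expect to be the \textbf{main obstacle} is the split bridge block in which $a$ shares its partite set with the near cut-vertex $w^{*}$: say $z_1\in V_1(B)$, $z_2\in V_2(B)$, $a\in V_1(B)$, and $b$ reached through $z_1$. Here $d(a,b)=2+d(z_1,b)$, yet $z_1$ is at distance $d(z_1,b)$ and $z_2$ at distance $1+d(z_1,b)$, so neither attains the value, and no non-cut-vertex of $V_1(B)$ survives in $T_G$. I would resolve this by leaving the block: since $z_2$ is a cut-vertex it lies in another block $B''$ on the far side of $b$, and $B''$ contributes to $T_G$ a neighbour $c$ of $z_2$; then $d(c,b)=1+d(z_2,b)=2+d(z_1,b)=d(a,b)$, so $a''=c\in V_{T_G}$. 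With the one-endpoint replacement established, the lemma follows by applying it first to $a$ (keeping $b$ fixed) to get $a''\in V_{T_G}$, and then to $b$ (keeping $a''$ fixed) to get $b''\in V_{T_G}$; the hypothesis $d(a,b)\ge 3$ guarantees at each stage that the fixed endpoint does not lie in the block of the endpoint being moved, so the replacement applies and the pair $a'=a''$, $b'=b''$ satisfies $d(a',b')=d(a,b)$.
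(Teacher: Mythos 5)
Your proposal is correct, and its core is the same as the paper's: replace one endpoint at a time by a vertex of $V_{T_G}$ at the same distance, split into cases by block type and cut-vertex placement, and in the one genuinely hard configuration escape into an adjacent block through a neighbour of the far cut-vertex --- your ``main obstacle'' is precisely what the paper handles in Subcase 1.2.2 and in the $z=u_{k-1}$ part of Subcase 2.2.2 of Lemma \ref{distance same}, with the identical resolution. What differs is the bookkeeping, and your version buys real simplification. The paper argues along a shortest path $au_1\ldots u_kb$ and splits on whether $u_k$ is a cut-vertex, so the hard configuration surfaces twice under different disguises and the short case $k=1$ is deferred (``can be verified similarly''); your gateway identity $d(a,b)=d_B(a,w^{*})+d(w^{*},b)$, with $d_B(a,w^{*})\in\{1,2\}$ read off from partite sets, indexes the cases by the only data that matter, makes the equidistance of same-partite non-cut-vertices a one-line consequence, and isolates the hard case uniquely. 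Your second deviation is to exploit that the lemma only asks for \emph{some} pair realizing the distance: since $T_G$ is an isometric subtree of $G$ (Lemma \ref{T_G tree}) on at least three vertices, distances $0,1,2$ are realized outright, and the surviving hypothesis $d(a,b)\geq 3$ then guarantees at each replacement that the fixed endpoint lies outside the block of the moved one; this cleanly disposes of what the paper leaves implicit. Two spots deserve explicit detail in a polished write-up: the claim that every block containing a given cut-vertex contributes to $T_G$ a neighbour of that cut-vertex (it is true, but it requires the block-by-block check against the definition of $NC_B$, and both your small-distance step and your escape step lean on it --- as does Remark \ref{p6}-style reasoning in the paper's escape), and the bridge block whose two cut-vertices lie in $V_1(B)$ with $a\in V_2(B)$, where the far cut-vertex does \emph{not} work and you should say explicitly that $u_2\in NC_B$ does, via your equidistance observation.
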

\vspace{-.5cm}
\begin{proof}
Let $a,b \in G$.  If $a,b\in V_{T_G}$ then the result follows. 
Consider the case $b\not \in V_{T_G}$. We claim 
that there exists $b^{\prime}\in V_{T_G}$ such that 
$d(a,b)=d(a,b^{\prime})$. Let $P_G(a,b)=au_1u_2\ldots u_{k}b$ be a path 
such that
$d(a,b)=l\left(P_G(a,b)\right)$ where 
$u_{k}\in V_1(B)$ and $b\in V_2(B)$ for  some block $B$ of $G$. We first  
prove the claim for 	$k\geq 2$.

\noindent	\textbf{Case 1}: Suppose that $u_{k} \not \in C_G$. Then by Remark 
\ref{p6},
$u_{k-1}\in V_2(B)$, and by $(P_2)$, $u_{k-2} \not \in  V_B$ where we take 
$u_{k-2}=a$ if $k=2$. Therefore, by Remark \ref{p6},  $u_{k-1} \in C_G$.\\
\textbf{Subcase 1.1}: 
Let $B$ be a leaf block. By the construction of $T_G$,   there exists a 
non-cut-vertex 		$b^\prime \in V_2(B)\cap 	V_{T_G}$. Using $(P_3)$ and  
$k\geq 2$, we see that $a \not \in V_B$ and  $b^\prime 	\not \in P_G(a,b)$. 
Therefore, $b^{\prime} u_{k} u_{k-1}\ldots u_1a$ is a 
path. It is clear that $u_{k-1}\in \bar{P}_G(b^{\prime},a)$ for all 
shortest paths 
$ \bar{P}_G(b^{\prime},a)$ between the vertices $b^{\prime}$ and $a$. Hence 
$d(b^{\prime},a)=d(b^{\prime},u_{k-1})+d(u_{k-1},a)=d(b,a)$.\\
\textbf{Subcase 1.2}: 
Assume that $B$ is a bridge block and  $z$ is a cut-vertex of $G$ in $B$ 
other   than $u_{k-1}$.  \\
\textbf{Subcase 1.2.1}: 	If $z \in V_2(B)$ then, by $(P_3)$, $z \not 
\in P_G(a,b)$. Therefore, $P_G(z,a)=zu_{k} u_{k-1}\ldots u_1a$ is a 
path. Again  by $(P_3)$, $u_{k-1}\in \bar{P}_G(z,a)$  for all shortest 
paths $ 
\bar{P}_G(z,a)$ between $z$ and $a$. This fact leads to
$d(z,a)=d(z,u_{k-1})+d(u_{k-1},a)=d(b,a)$. Since $z\in 
C_G$, we have $z\in V_{T_G}$. Therefore, in this case, choose 
$b^{\prime}=z$.\\		
\textbf{Subcase 1.2.2}: Suppose that $z \in V_1(B)$. Then $z\neq u_{k-2}$ 
because $u_{k-2}\not \in V_B$. So, by $(P_3)$, $z \not 
\in P_G(a,b)$,  and $u_{k-1}\in \bar{P}_G(z,a)$  for all shortest paths 
$ 
\bar{P}_G(z,a)$ between $z$ and $a$. This implies that 
$d(z,a)=1+d(u_{k-1},a)=d(b,a)-1$. 
Since $z\in C_G\cap V_1(B)$, by $(P_4)$, $z\in V_{B^{\prime}}$ for some 
block	$B^{\prime}\neq B$. By the construction of $T_G$, there exists 
$b^{\prime} \in V_{B^{\prime}}\cap V_{T_G}$ such that $b^{\prime}$ is 
adjacent to $z$. Again by $(P_3)$, $b^{\prime} \not \in 
P(z,a)=zu_{k-1}\ldots u_1a$ and $z\in  \bar{P}_G(b^{\prime},a)$  for 
all shortest paths $\bar{P}_G(b^{\prime},a)$ between $b^{\prime}$ and 
$a$. 
Therefore, 
$d(b^{\prime},a)=d(b^{\prime},z)+d(z,a)=1+d(z,a)=d(b,a)$.\\
\noindent	\textbf{Case 2}: Let $u_{k} \in C_G$. Since $b\in V_2(B)$ 
with $b \not\in V_{T_G}$, we have $\lvert V_2(B)\rvert\geq 2$ because $V_2(B)\cap 
V_{T_G}\neq \emptyset$. Therefore, $B\neq 
K_{1,1}$.\\
\textbf{Subcase 2.1}:
If $B$ is a leaf block, then choose $b^{\prime}\in V_2(B)\cap V_{T_G}$. We 
show that  $u_{k-1} \not \in V_B$. On the contrary, assume that
$u_{k-1}  \in V_B$. Then, by $(P_2)$, $u_{k-2}\not \in V_B$. Also, by 
Remark \ref{p6}, $u_{k-1}\in C_G$ which is not possible because $B$ is a 
leaf 
block with  $u_k\in V_B\cap C_G$. Hence $u_{k-1} \not \in V_B$ and this 
yields 
$b^{\prime}\neq u_{k-1}$. Therefore, by ($P_3$), we observe that 
$b^{\prime}\not \in 
P_G(a,b)$, and by 	Remark \ref{p6}, $u_k\in\bar{P}_G(b^{\prime},a)$ for 
all shortest  paths 
$\bar{P}_G(b^{\prime},a)$ between $b^{\prime}$ and $a$. Clearly, 
$b^{\prime}u_ku_{k-1}\ldots u_1a$ is a 
path, and we have $d(b^{\prime},a)=d(b^{\prime}, u_k)+d(u_k,a)=d(b,a)$.\\
\textbf{Subcase 2.2}: Suppose that $B$ is a bridge 
block.  Let $z\in V_B\cap C_G$ with $z\neq u_k$.\\
\textbf{Subcase 2.2.1}: If $z \in V_1(B)$ then take $b^{\prime}\in 
V_2(B)\cap V_{T_G}$. Rest of the proof in this case is same as that of 
subcase $2.1$.\\		
\textbf{Subcase 2.2.2}: Let $z \in V_2(B)$.
If $z\neq u_{k-1}$, then $u_{k-1} \not \in V_B$. By $(P_3)$, $z \not 
\in P_G(a,b)$, and $u_{k}\in \bar{P}_G(z,a)$  for all shortest paths $ 
\bar{P}_G(z,a)$ between $z$ and $a$. Therefore, $zu_ku_{k-1}\ldots 
u_1a$ is a shortest path 
and hence $d(z,a)=d(b,a)$.
Suppose that $z= u_{k-1}$. Since $u_k \in C_G$, there exists $b^{\prime}\in 
V_{B^{\prime}}\cap V_{T_G}$  such that $b^{\prime}$ 
is adjacent to $u_k$ where $B^{\prime}$ is a block of $G$ different from 
$B$. Then, by $(P_3)$, $b^{\prime} \not 
\in P_G(u_k,a)$, and  $u_{k}\in \bar{P}_G(b^{\prime},a)$  for all shortest 
paths $	\bar{P}_G(b^{\prime},a)$. 
By considering the 
path 
$P_G(b^{\prime},a)=b^{\prime}u_ku_{k-1}\ldots u_1a$, we obtain 
$d(b^{\prime},a)=d(b,a)$.\\
Hence, from the above cases, we have 
$d(a,b)=d(a,b^{\prime})$ for some $b^{\prime}\in V_{T_G}$.  The proof for the 
case   $k=1$ can be verified similarly. If $a\in V_{T_G}$ 
then the result follows. If $a \not\in V_{T_G}$ then repeat the above argument 
to the newly obtained path $P_G(b^{\prime},a)$ to find   $a^{\prime}\in 
V_{T_G}$ such that $d(a,b)=d(a^{\prime},b^{\prime})$. This completes the proof.
\end{proof}

The next two lemmas are the consequences of the above result.

\begin{lemma}\label{diam of G and T_G}
For each $G\in \mathscr{B}$, $\diam(G)=\diam(T_G)$. 
\end{lemma}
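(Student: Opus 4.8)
The plan is to prove the two inequalities $\diam(T_G) \leq \diam(G)$ and $\diam(G) \leq \diam(T_G)$ separately, relying entirely on the two preceding lemmas. Recall that $\diam(G) = \max\{d_G(a,b) : a,b \in V_G\}$ and similarly for $T_G$, and that $V_{T_G} \subseteq V_G$ since $T_G$ is an induced subgraph of $G$.

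For the inequality $\diam(T_G) \leq \diam(G)$, I would take an arbitrary pair $a, b \in V_{T_G}$. Since $V_{T_G} \subseteq V_G$, both $a$ and $b$ are vertices of $G$, so $d_G(a,b) \leq \diam(G)$ by definition of the diameter. By Lemma \ref{T_G tree}, the distances agree on $V_{T_G}$, giving $d_{T_G}(a,b) = d_G(a,b) \leq \diam(G)$. Maximizing over all pairs $a, b \in V_{T_G}$ yields $\diam(T_G) \leq \diam(G)$.

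For the reverse inequality $\diam(G) \leq \diam(T_G)$, I would start by choosing a pair $a, b \in V_G$ realizing the diameter, i.e.\ $d_G(a,b) = \diam(G)$. Lemma \ref{distance same} then produces vertices $a', b' \in V_{T_G}$ with $d_G(a',b') = d_G(a,b) = \diam(G)$. Because $a', b' \in V_{T_G}$, Lemma \ref{T_G tree} applies again to give $d_{T_G}(a',b') = d_G(a',b') = \diam(G)$. Hence $\diam(T_G) \geq d_{T_G}(a',b') = \diam(G)$. Combining both inequalities gives $\diam(G) = \diam(T_G)$.

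There is essentially no genuine obstacle here, as the two lemmas do all the work: Lemma \ref{distance same} guarantees that every $G$-distance is witnessed by a pair of vertices lying in $T_G$, and Lemma \ref{T_G tree} lets one pass freely between $d_G$ and $d_{T_G}$ on $V_{T_G}$. The only point requiring a small amount of care is bookkeeping of which distance function is in play at each step, but since Lemma \ref{distance same} is phrased with $d = d_G$ and Lemma \ref{T_G tree} equates $d_G$ and $d_{T_G}$ on $V_{T_G}$, the transitions are immediate. Thus the whole argument reduces to the two short inequalities above.
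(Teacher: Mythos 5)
Your proof is correct and follows essentially the same route as the paper: one inequality comes from applying Lemma \ref{distance same} to a diametrical pair of $G$ and then transferring to $T_G$-distances via Lemma \ref{T_G tree}, while the reverse inequality follows directly from the distance-preservation in Lemma \ref{T_G tree}. The only difference is cosmetic (order of the two inequalities), so there is nothing to add.
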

\vspace{-.5cm}
\begin{proof}
Let	$P_G(a,b)$ be a diametrical path in $G$. By 
Lemma 
\ref{distance same}, $d_G(a,b)=d_G(a^{\prime},b^{\prime})$ for some 
$a^{\prime},b^{\prime} \in V_{T_G}$. Therefore, by Lemma \ref{T_G tree},  
$d_G(a^{\prime},b^{\prime})=d_{T_G}(a^{\prime},b^{\prime})$. This implies 
that $\diam(G)=d_G(a,b)=d_{T_G}(a^{\prime},b^{\prime})\leq \diam(T_G)$. 
To prove the other inequality, let $x,y \in 
V_{T_G}$ such that $d_{T_G}(x,y)= \diam(T_G)$. Using Lemma \ref{T_G tree}, 
we get $d_{T_G}(x,y)= d_{G}(x,y)\leq\diam(G)$.
\end{proof}

Let $H$ be a connected graph and let $u\in V_H$. We denote the  
eccentricity of a vertex $u$ 
with respect 
to $H$ by $e_H(u)$, and the subscript is omitted  if it is 
clear from the context.

\begin{lemma}\label{ecc_w.r.t T_G}
If $G\in \mathscr{B}$, then $e_G(a)=e_{T_G}(a)$ for all $a\in 
V_{T_G}$.
\end{lemma}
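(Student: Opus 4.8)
The plan is to prove the two inequalities $e_{T_G}(a)\le e_G(a)$ and $e_G(a)\le e_{T_G}(a)$ separately, relying on Lemma~\ref{T_G tree} (which equates distances in $G$ and in $T_G$ between vertices of $T_G$) together with Lemma~\ref{distance same}. Throughout, the key structural fact is that $V_{T_G}\subseteq V_G$, so that the eccentricity computed within $T_G$ is a maximum taken over a smaller vertex set.

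First I would dispose of the easy inequality. Since $V_{T_G}\subseteq V_G$ and, by Lemma~\ref{T_G tree}, $d_{T_G}(a,w)=d_G(a,w)$ for every $w\in V_{T_G}$, we obtain
\[
e_{T_G}(a)=\max_{w\in V_{T_G}}d_{T_G}(a,w)=\max_{w\in V_{T_G}}d_G(a,w)\le \max_{v\in V_G}d_G(a,v)=e_G(a).
\]
For the reverse inequality I would choose a vertex $b\in V_G$ realizing the eccentricity, so that $d_G(a,b)=e_G(a)$. If $b\in V_{T_G}$, then $e_G(a)=d_G(a,b)=d_{T_G}(a,b)\le e_{T_G}(a)$ by Lemma~\ref{T_G tree}, and we are done. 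Otherwise I would apply Lemma~\ref{distance same} to the pair $a,b$: since $a\in V_{T_G}$ already, there is a $b'\in V_{T_G}$ with $d_G(a,b)=d_G(a,b')$, whence $e_G(a)=d_G(a,b)=d_G(a,b')=d_{T_G}(a,b')\le e_{T_G}(a)$, again via Lemma~\ref{T_G tree}.

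The point requiring care is the invocation of Lemma~\ref{distance same} with the endpoint $a$ held fixed. Its statement permits both endpoints to move, but because $a\in V_{T_G}$ the relevant portion of its proof is exactly the reduction carried out for the endpoint $b$, after which the branch ``if $a\in V_{T_G}$ then the result follows'' leaves $a$ unchanged; hence the replacement vertex $b'$ lies in $V_{T_G}$ while $a$ is preserved. I expect this bookkeeping to be the only genuine obstacle, as everything else follows mechanically. Combining the two inequalities yields $e_G(a)=e_{T_G}(a)$ for all $a\in V_{T_G}$, completing the proof.
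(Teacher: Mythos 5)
Your proof is correct and follows essentially the same route as the paper: the easy inequality $e_{T_G}(a)\le e_G(a)$ via Lemma~\ref{T_G tree}, and the reverse inequality by replacing an eccentricity-realizing vertex $b\notin V_{T_G}$ with some $b'\in V_{T_G}$ using the argument of Lemma~\ref{distance same}. You even flag the same subtlety the paper handles by citing ``the proof of Lemma~\ref{distance same}'' rather than its statement, namely that when $a\in V_{T_G}$ the reduction leaves $a$ fixed and only moves $b$.
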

\vspace{-.5cm}
\begin{proof}
Let $a\in V_{T_G}$. By Lemma \ref{T_G tree},  $\{d_{T_G}(a,x): x\in 
V_{T_G}\} \subseteq \{d_G(a,x): x\in V_{G}\}$. This implies
$e_{T_G}(a)\leq e_G(a)$. Let $b \in V_G$ be such that
$e_G(a)=d_G(a,b)$. If $b\in 
V_{T_G}$ then again by Lemma \ref{T_G tree}, $d_G(a,b)=d_{T_G}(a,b)\leq 
e_{T_G}(a)$. Suppose that  $b\not \in V_{T_G}$. Then from the 
proof of 
Lemma \ref{distance same}, we have $d_G(a,b)=d_G(a,b^{\prime})$ for 
some 
$b^{\prime}\in V_{T_G}$. So, 	$e_{T_G}(a)\geq e_G(a)$ follows by 
Lemma \ref{T_G tree}.
\end{proof}

Since most of the results of this paper hold for graphs in 
$\mathscr{B}$ with diameters at least four (for instance, see Example 
\ref{counter ex}), we  deal only with 
those graphs hereafter. 
In the following lemma, the eccentricity of a non-cut-vertex 
(when it exists) of $G\in \mathscr{B}$ is presented in terms of the  
eccentricities of the cut-vertices of $G$. In Section \ref{inertia and spectral 
symmetry}, we see  the usefulness of 
this lemma in 
proving the spectral symmetry of $E(G)$.
\begin{lemma}\label{eccentricity lemma}
Let $G\in \mathscr{B}$ be such that $\diam(G) \geq 4$. 
Let $B$ be a block of $G$ and let $u\in V_B$ be a non-cut-vertex of $G$. 
\begin{itemize}
\item [(i)] Suppose that $B$ is a bridge block of $G$ and  $v_1$ and $v_2$ 
are cut-vertices of $G$ belong to $B$. If $e(v_1)\leq e(v_2)$, then 
\begin{eqnarray*}\label{Eccentricty bridge}
e(u)= \begin{cases}  
	e(v_2) & \text{if } v_1,v_2,u\in V_1(B),\\
	e(v_2)-1 & \text{if } v_1,v_2\in V_1(B)~\text{and}~ u\in V_2(B),\\
	e(v_2)+1 & \text{if } v_1,u\in V_1(B)~\text{and}~ v_2\in V_2(B),\\
	e(v_1)+1 & \text{if } v_1\in V_1(B)~\text{and}~ v_2,u\in V_2(B).
\end{cases}
\end{eqnarray*}

\item [(ii)] If $B$ is a leaf block  and $v$ is a 
cutvertex of $G$ belongs to $B$, then 
\begin{eqnarray*}
e(u)= \begin{cases} 
	e(v)+2 & \text{if }
	u,v\in V_1(B)~\text{or}~ u,v\in V_2(B),\\
	e(v)+1 & \text{otherwise}.
\end{cases}
\end{eqnarray*}
\end{itemize}
\end{lemma}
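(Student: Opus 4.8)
The plan is to express every eccentricity in the statement through a single parameter attached to each cut-vertex, namely how deep $G$ extends beyond $B$ on that side. For a cut-vertex $v_i\in V_B$, let $G_i$ denote the component of $v_i$ in $G-(V_B\setminus\{v_i\})$, i.e.\ the branch of $G$ hanging off $B$ at $v_i$, and put $a_i:=\max\{d(v_i,w):w\in V_{G_i}\}$. Since a cut-vertex lies in at least two blocks by $(P_4)$, it has a neighbour inside $G_i$, so $a_i\ge 1$. The decisive first step is the additive decomposition: by Remark \ref{p6} any shortest path from a vertex $x\in V_B$ to a vertex $w\notin V_B$ leaves $B$ through one of $v_1,v_2$, whence $d(x,w)=d(x,v_i)+d(v_i,w)$. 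Maximising over $w$ then gives, for every $x\in V_B$,
\[
  e(x)=\max\bigl\{\,d(x,v_1)+a_1,\ d(x,v_2)+a_2,\ r_B(x)\,\bigr\},\qquad r_B(x):=\max_{w\in V_B}d(x,w)\le 2 .
\]
This reduces the whole lemma to arithmetic once the intra-block distances are identified.

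The second step records those intra-block distances: in the complete bipartite block $B$, two vertices are at distance $1$ if they lie in different partite sets and at distance $2$ if they lie in the same set. Hence in each of the four cases of (i) the quantities $d(v_1,v_2)$, $d(u,v_1)$, $d(u,v_2)$ are read off as $1$ or $2$. A useful simplification is that each relevant term $d(v_1,v_2)+a_j$ and $d(u,v_i)+a_i$ is already at least $2$, so $r_B$ is automatically dominated and may be dropped from the formulas for $e(v_1)$, $e(v_2)$ and $e(u)$.

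Applying the displayed identity with $x=v_1$ and $x=v_2$, the third step is to convert the hypothesis $e(v_1)\le e(v_2)$ into $a_1\ge a_2$; writing $d:=d(v_1,v_2)$ one has $e(v_1)=\max\{a_1,d+a_2\}$ and $e(v_2)=\max\{a_2,d+a_1\}$, from which $e(v_1)\le e(v_2)\iff a_1\ge a_2$ follows directly (for both $d=1$ and $d=2$). With $a_1\ge a_2\ge 1$ secured, substituting the intra-block distances of each case into the formula for $e(u)$ and simplifying yields $e(u)=a_1+2=e(v_2)$, $\,a_1+1=e(v_2)-1$, $\,a_1+2=e(v_2)+1$, and $\max\{a_1+1,a_2+2\}=e(v_1)+1$ respectively, which are exactly the four asserted values. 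Part (ii) is handled the same way with a single cut-vertex $v$ and parameter $a$: here $\diam(G)\ge 4$ is used decisively, since $a\le 1$ would force every distance in $G$ to be at most $3$; thus $a\ge 2$, giving $e(v)=a$, and then $e(u)=d(u,v)+a$ produces $e(v)+2$ when $u,v$ share a partite set and $e(v)+1$ otherwise.

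I expect the main obstacle to be a fully rigorous proof of the additive decomposition in the first step: one must invoke Remark \ref{p6} together with $(P_2)$ and $(P_3)$ to guarantee that a shortest path from $x\in V_B$ to a far vertex exits $B$ through a single cut-vertex and never re-enters $B$, so that distances split additively and each branch-eccentricity $a_i$ is realised within $G_i$. Once this structural fact is established the rest is routine case analysis, the only delicate points being the automatic domination of the constant $r_B$ by the remaining terms and the elementary equivalence $e(v_1)\le e(v_2)\iff a_1\ge a_2$ of the third step.
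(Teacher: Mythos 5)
Your proof is correct, and it takes a genuinely different route from the paper's. The paper argues directly on shortest paths and eccentricity-realizing vertices: it chooses $x\notin V_B$ with $e(v_2)=d(v_2,x)$, uses the hypothesis $e(v_1)\le e(v_2)$ to force $v_1$ onto that path, obtains the lower bound $e(u)\ge e(v_2)+[d(v_1,u)-d(v_1,v_2)]$, and then matches it with upper bounds via a case analysis on which of $v_1,v_2$ lie on a path realizing $e(u)$; part (ii) is handled by first showing that some vertex outside $B$ attains $e(v)$, which is exactly where $\diam(G)\ge 4$ enters. You instead prove one structural identity --- the additive decomposition $e(x)=\max\{d(x,v_1)+a_1,\ d(x,v_2)+a_2,\ r_B(x)\}$ in terms of the branch depths $a_i$ --- and reduce everything else to arithmetic. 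This is more modular and arguably more illuminating: the hypothesis $e(v_1)\le e(v_2)$ becomes the transparent condition $a_1\ge a_2$, all four cases of (i) and both cases of (ii) follow mechanically, and your argument makes visible that (i) never actually uses $\diam(G)\ge 4$ while (ii) uses it only to force $a\ge 2$ (the same role it plays in the paper). The price is that the decomposition carries the whole structural burden, which you correctly flag: beyond Remark \ref{p6} and $(P_3)$ one must also verify that the branches $G_1$ and $G_2$ are disjoint and exhaust $V_G\setminus V_B$, so that the index $i$ with $d(x,w)=d(x,v_i)+d(v_i,w)$ depends only on $w$ and not on $x$; this holds because a vertex lying in both branches would produce a cycle through $v_1$, $v_2$ and a vertex outside $B$, hence a block other than $B$ containing both $v_1$ and $v_2$, contradicting that two distinct blocks share at most one vertex. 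With that detail supplied, your remaining reductions (domination of $r_B\le 2$, the equivalence $e(v_1)\le e(v_2)$ if and only if $a_1\ge a_2$, and the four case computations) all check out.
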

\vspace{-.5cm}
\begin{proof}
$(i)$	 Since $B$ is a bridge block with $v_1,v_2\in 
V_B\cap C_G$, 
there exists a vertex
$x\in V_G\setminus V_B$ such that $e(v_2)=d(v_2,x)$. Let $P(v_2,x)$ be 
a shortest 	path between $v_2$ and $x$ in $G$. We claim that 
$v_1\in P(v_2,x)$. On the contrary, assume that $v_1 \not \in 
P(v_2,x)$. 
Then, by  $(P_3)$ and 	Remark \ref{p6}, $v_2\in \bar{P}(v_1,x)$ 
for 
all paths 
$\bar{P}(v_1,x)$. 
Therefore, 		
$e(v_1)\geq d(v_1,x)=d(v_1,v_2)+d(v_2,x)\geq 1+e(v_2)>e(v_2)$, 
which 
contradicts the assumption $e(v_1)\leq e(v_2)$. Hence the claim $v_1\in 
P(v_2,x)$ follows. This implies that $d(x,v_2)=d(x,v_1)+d(v_1,v_2)$. 
Let $u\in V_B\setminus C_G$. Note that $v_1 \in 	\bar{P}(x,u)$ for all 
shortest paths $\bar{P}(x,u)$ by $(P_3)$ and Remark \ref{p6}. Therefore,
$d(x,u)=d(x,v_1)+d(v_1,u)=[d(x,v_2)-d(v_1,v_2)]+d(v_1,u)$, and hence 
\begin{equation}\label{geq ineq}
e(u)\geq 
d(x,u)=d(x,v_2)+[d(v_1,u)-d(v_1,v_2)]=e(v_2)+[d(v_1,u)-d(v_1,v_2)].
\end{equation}	
Since $v_1,v_2\in 
V_B\cap C_G$,  
there always exists 
$y\in V_G\setminus V_B$ such that  $e(u)=d(u,y)$. Let $P(u,y)$ be a 
shortest path between $u$ and $y$.
Suppose that $v_1 \in P(u,y)$ and $v_2 \not \in P(u,y)$. Then, by ($P_3$) 
and Remark \ref{p6}, 
any shortest path 
$\bar{P}(v_2,y)$ contains $v_1$. This implies that 
$d(v_2,y)=d(v_2,v_1)+d(v_1,y).$
Therefore, we have 
\begin{align}\label{leq ineq A}
e(u)=	d(u,y)&=d(u,v_1)+d(v_1,y)\\\notag
&=d(u,v_1)+[d(v_2,y)-d(v_1,v_2)]\\\label{leq ineq B}
& \leq 	 	e(v_2)+[d(u,v_1)-d(v_1,v_2)].
\end{align}
If $v_1 \not \in  P(u,y)$,  then by Remark \ref{p6}, $v_2 \in  P(u,y)$. 
Also, by 
$(P_3)$, $v_2\in 	\bar{P}(v_1,y)$ for all shortest paths 
$	\bar{P}(v_1,y)$. Then $d(y,v_1)=d(y,v_2)+d(v_2,v_1)$. Hence
\begin{align}\label{leq ineq C2}
e(u)=	d(u,y)=d(u,v_2)+d(v_2,y)&=d(u,v_2)+[d(y,v_1)-d(v_1,v_2)]\\
\label{leq ineq C1}
& \leq 	 	e(v_1)+[d(u,v_2)-d(v_1,v_2)]\\\label{leq ineq C}
& \leq 	 	e(v_2)+[d(u,v_2)-d(v_1,v_2)].
\end{align}
\noindent	\textbf{Case 1}: Let $v_1,v_2\in V_1(B)$. Since $P(u,y)$ is a 
shortest path between $u$ and $y$, by Remark \ref{p6}, we have
$\lvert V_{P(u,y)}\cap\{v_1,v_2\}\rvert =1$ where $V_{P(u,y)}$ is the set of all
vertices in the path $P(u,y)$. Now, the result follows from (\ref{geq 
ineq}), 
(\ref{leq ineq B}) and (\ref{leq ineq C}), case-by-case.\\
\noindent	\textbf{Case 2}: Assume that $v_1 \in V_1(B)$ and $v_2 \in 
V_2(B)$.\\
\textbf{Subcase 2.1}: Let $u\in V_1(B)$. From (\ref{geq ineq}),
$e(u)\geq 	e(v_2)+1$. To prove the other 	inequality,  we first show 
that 	$v_1\in P(y,u)$. On the 
contrary, assume that $v_1 \not \in P(y,u)$. Then by Remark \ref{p6}, $v_2 
\in 
P(y,u)$ and so, by  $(P_3)$, $v_2 \in 	\bar{P}(y,v_1)$ for all shortest 
paths 
$	\bar{P}(y,v_1)$.
Therefore, 
$e(u)=d(y,u)=d(y,v_2)+d(v_2,u)=d(y,v_2)+d(v_2,v_1)=d(y,v_1)\leq e(v_1)$. 
That is, $e(v_1)\geq e(u)\geq e(v_2)+1$  where the last 
inequality follows from 
(\ref{geq ineq}). This contradicts the assumption $e(v_1)\leq e(v_2)$. 
Hence 	$v_1\in P(y,u)$. 
If $v_1\in P(y,u)$ and  $v_2 \not \in P(y,u)$ then by (\ref{leq 
ineq B}), $e(u)\leq e(v_2)+1$. Suppose 
that $v_1,v_2 \in P(y,u)$. Then 
$e(u)=d(y,u)=d(y,v_2)+d(v_2,u)\leq e(v_2)+1$. 	 This 
completes the proof in this case.\\
\textbf{Subcase 2.2}:  Suppose that $u\in V_2(B)$. Let $e(v_1)=d(z,v_1)$ 
for some $z \in V_G\setminus V_B$
and 
let 
$P(z,v_1)$ be a shortest path between $z$ and $v_1$ in $G$. If $v_2\in 
P(z,v_1)$ then  by $(P_3)$, $v_2\in 
\bar{P}(z,u)$ for all shortest paths $	\bar{P}(z,u)$. 
We have  
$d(z,u)=d(z,v_2)+d(v_2,u)$ and $d(z,v_1)=d(z,v_2)+1$. This implies that  
$e(u)\geq 
d(z,u)=d(z,v_1)+1=e(v_1)+1$.
Suppose that $v_2\not \in P(z,v_1)$. Then again by   $(P_3)$, $v_2\not\in 
\bar{P}(z,u)$, and  by   Remark \ref{p6}, $v_1\in 
\bar{P}(z,u)$ for all paths $\bar{P}(z,u)$. 
So, 
$d(z,u)=d(z,v_1)+d(v_1,u)=e(v_1)+1$. Therefore, $e(u)\geq e(v_1)+1$. If 	
$\lvert V_{P(u,y)}\cap\{v_1,v_2\}\rvert =1$  then the
other 
inequality, $e(u)\leq e(v_1)+1$ follows from (\ref{leq ineq A}) and 
(\ref{leq 
ineq C1}).  If  $v_1,v_2 \in P(y,u)$ then 
$e(u)=d(y,u)=d(y,v_1)+d(v_1,u)\leq e(v_1)+1$. \\

\noindent $(ii)$ We claim that there exists $x\in 
V_G\setminus V_B$ such that 
$e(v)=d(x,v)$. If $e(v)\geq 3$ then the claim follows. Consider the case $e(v)= 
2$. Suppose that the claim does not hold. Then $d(x,v)<e(v)$ for all $x \in 
V_G\setminus V_B$. That is, $v$ is adjacent to  all $x \in V_G\setminus V_B$. 
This leads to $\diam(G)\leq 3$, a contradiction to the hypothesis. Hence the 
claim. Let $u,v \in 
V_1(B)$. By Remark \ref{p6}, any path $P(u,x)$ passes through $v$. So,
$e(u)\geq d(x,u)=d(x,v)+d(v,u)=e(v)+2$. This implies $e(u)\geq 4$ and hence  
$e(u)=d(u,y)$ for some $y\in V_G\setminus V_B$.
Note that $e(u)= d(y,u)=d(y,v)+2\leq e(v)+2$. Hence  $e(u)=e(v)+2$. Similarly, 
the 
proof follows for the remaining cases.
\end{proof}

Let $\diam(G) \geq 4$ and let $B$ be a block of $G$ such 
that $u\in V_B\setminus C_G$. Suppose that $B$ is a leaf block and  $v\in 
V_B\cap C_G$. From Lemma 
\ref{eccentricity lemma}, we have $e(u)>e(v)$. If $B$ is a bridge block such that the cut-vertices 
$v_1$ and $v_2$ of $B$ are not in the same partite set then also we have 
$e(u)>e(v_i)$ for some $i$. 
Suppose that  the cut-vertices 
$v_1$ and $v_2$ of the bridge block $B$ are  in the same partite set, say 
$V_1(B)$, then $e(u)>e(u_0)$ when $u\in V_1(B)$ and  $u_0\in V_2(B)$. Hence 
$u \not \in C(G)$ in the above cases. Therefore, a non-cut-vertex $u$ of the block $B$ can belong 
to $C(G)$ only when $B$ is a bridge block whose cut-vertices are in one 
partite set of $B$ and  $u$ lies in the other partite set of $B$. Precisely, 
the  result is given in the following remark. 
\begin{remark}\label{non_cutvertex_center}
Let $G\in \mathscr{B}$ with $\diam(G) \geq 4$ and 
let  $x \in V_1(B)$ for some block $B$ of $G$. If $x \in C(G) \setminus 
C_G$, then 
$B$ is a bridge block with $\lvert V_2(B)\cap C_G\rvert=2$.
\end{remark}
The next lemma is a main tool in finding the central vertices of the graphs in 
$\mathscr{B}$.
\begin{lemma}\label{center_rmk}
Let $G\in \mathscr{B} $ and let $\diam(G)\geq 4$. Then $C(T_G)\subseteq C(G)$.
\end{lemma}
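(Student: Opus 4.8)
The plan is to show that $G$ and its associated tree $T_G$ have the same radius, and then to read off the center inclusion directly from the eccentricity-preservation result. Writing $C(G)=\{v\in V_G : e_G(v)=\rad(G)\}$, the whole argument reduces to proving $\rad(G)=\rad(T_G)$: once this is established, any $a\in C(T_G)\subseteq V_{T_G}$ satisfies $e_G(a)=e_{T_G}(a)=\rad(T_G)=\rad(G)$ by Lemma \ref{ecc_w.r.t T_G}, and hence $a\in C(G)$.

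First I would establish the easy inequality $\rad(G)\le \rad(T_G)$. Since $T_G$ is a nonempty tree, it has a central vertex $a$ with $e_{T_G}(a)=\rad(T_G)$; applying Lemma \ref{ecc_w.r.t T_G} gives $e_G(a)=\rad(T_G)$, so that $\rad(G)=\min\{e_G(v): v\in V_G\}\le e_G(a)=\rad(T_G)$.

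The reverse inequality $\rad(G)\ge \rad(T_G)$ is the crux. Here I would invoke two standard facts from the literature (e.g.\ \cite{BaRa2012,Dies2010}): for every connected graph one has $\diam\le 2\,\rad$, while for a tree the radius and diameter satisfy $\rad(T_G)=\lceil \diam(T_G)/2\rceil$. Using $\diam(G)=\diam(T_G)$ from Lemma \ref{diam of G and T_G}, the general inequality yields $2\,\rad(G)\ge \diam(G)=\diam(T_G)$. When $\diam(T_G)$ is even this reads $2\,\rad(G)\ge 2\,\rad(T_G)$, and when it is odd it reads $2\,\rad(G)\ge 2\,\rad(T_G)-1$; in either case, since radii are integers, we conclude $\rad(G)\ge \rad(T_G)$. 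Combined with the previous paragraph, this gives $\rad(G)=\rad(T_G)$ and completes the proof.

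The main obstacle is precisely this radius comparison; the center inclusion is then immediate. No further structural analysis of the blocks of $G$ is needed at this stage, because all the heavy lifting---that $T_G$ is an isometric tree with the same diameter and the same vertex eccentricities---has already been done in Lemmas \ref{T_G tree}--\ref{ecc_w.r.t T_G}. One should only verify that the hypothesis $\diam(G)\ge 4$ is not secretly used: it is not, since both the tree identity $\rad(T_G)=\lceil \diam(T_G)/2\rceil$ and the bound $\diam\le 2\,\rad$ hold unconditionally, so this hypothesis is simply inherited from the running assumption of the section.
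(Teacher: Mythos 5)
Your proof is correct, and it takes a genuinely different route from the paper's. The paper fixes $z\in C(T_G)$ and proves $e_G(u)\geq e_G(z)$ vertex by vertex: for $u\in V_{T_G}$ this is immediate from Lemma \ref{ecc_w.r.t T_G}, while for $u\notin V_{T_G}$ (necessarily a non-cut-vertex) it invokes Lemma \ref{eccentricity lemma} together with a case analysis on whether the block containing $u$ is a leaf block or a bridge block, comparing $e_G(u)$ with the eccentricity of a nearby cut-vertex or of a representative of $u$'s partite set inside $T_G$; this is precisely where the hypothesis $\diam(G)\geq 4$ enters, since Lemma \ref{eccentricity lemma} requires it. You instead reduce the inclusion to the radius identity $\rad(G)=\rad(T_G)$, deduced from $\diam(G)=\diam(T_G)$ (Lemma \ref{diam of G and T_G}), the tree relation $\rad(T_G)=\lceil\diam(T_G)/2\rceil$ (which is contained in Lemma \ref{center of  a tree}), the universal bound $\diam\leq 2\rad$, and an integrality/parity argument, all of which are sound; the inclusion then follows from Lemma \ref{ecc_w.r.t T_G} alone. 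Your argument is shorter, purely metric, avoids Lemma \ref{eccentricity lemma} and the block case analysis altogether, and, as you observe, never uses $\diam(G)\geq 4$, so it actually proves the lemma under weaker hypotheses; it also yields $\rad(G)=\rad(T_G)$ as a by-product of independent interest. What the paper's block-level analysis buys is exactly the machinery (Remark \ref{non_cutvertex_center} and Lemma \ref{eccentricity lemma}) that is indispensable immediately afterwards in Theorem \ref{center of G lemma}, where one must determine $C(G)$ exactly, i.e., also decide which vertices \emph{outside} $T_G$ are central; your radius argument gives the inclusion $C(T_G)\subseteq C(G)$ but provides no information in the reverse direction.
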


\begin{proof}
Let $z\in C(T_G)$. Then by Lemma \ref{ecc_w.r.t T_G}, $e_{G}(z)= e_{T_G}(z)$. 
Let $u\in V_G$.
Suppose that  $u\in V_{T_G}$. Then 
$e_G(u)=e_{T_G}(u) \geq e_{T_G}(z)=e_{G}(z) $.
If $u \not \in V_{T_G}$, then $u \not \in C_G$ because $C_G \subseteq V_{T_G}$.
Assume 
that $u \in V_B$ for some block $B$.
If $B$ is a leaf block such that $v\in V_B\cap C_G$, then by Lemma 
\ref{eccentricity lemma}, we have
$e_G(u)\geq e_G(v)=e_{T_G}(v) \geq e_{T_G}(z).$
Suppose that $B$ is a bridge block such that $v_1,v_2\in V_B\cap C_G$. Consider 
the case $v_1,v_2\in V_1(B)$ and 
$u\in V_2(B)$. Then, by Lemma \ref{eccentricity lemma}, $e_G(x)=e_G(u)$ 
for all $x\in V_2(B)$. In particular, by the 
construction of $T_G$, there exists $u_0\in V_2(B)\cap V_{T_G}$ such that 
$e_G(u_0)=e_G(u)$. This implies 
$e_G(u)=e_G(u_0)= e_{T_G}(u_0)\geq e_{T_G}(z).$
For the remaining cases of $B$, it directly follows from Lemma 
\ref{eccentricity lemma} that 
$e_G(u)\geq e_{T_G}(z)= e_{G}(z)$.
Hence 
$z\in C(G)$, which completes the proof.
\end{proof}

Recall the following result to  study some properties of the central vertices 
of $T_G$.

\begin{lemma}[\cite{BaRa2012}, see \cite{MaKa2022,LiWB2022}]\label{center of  a tree}
Let $T$ be a tree on $n\geq3$ vertices and $\diam(T)=m$. Then the center 
$C(T)$ of $T$ has 	either a single vertex $z_1$ or two adjacent vertices 
$z_1$ and $z_2$ such that 	$e(z_i)= \lceil {\frac{m}{2}}  \rceil$ for all $i=1,2$, 
where $\lceil 
{\frac{m}{2}}\rceil=\frac{m}{2}$ if 
$m$ is an even integer, and $\lceil {\frac{m}{2}}\rceil=\frac{m+1}{2}$ if $m$ is 
an  odd integer. 
Moreover, $\lvert C(T)\rvert=1$ if $\diam(T)$ 
is 
even, and $\lvert C(T)\rvert=2$ if $\diam(T)$ is odd.
\end{lemma}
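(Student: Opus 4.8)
The plan is to prove this classical fact (Jordan's theorem on the centre of a tree) by induction on the diameter $m$, using the operation of simultaneously deleting all leaves of $T$. Write $T'$ for the subtree of $T$ obtained by removing every vertex of degree one together with its incident edge. Since $T$ is a tree, paths between surviving vertices are unchanged, so $d_{T'}(a,b)=d_T(a,b)$ for all $a,b\in V_{T'}$, exactly as in Lemma \ref{T_G tree}.

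First I would record three structural facts about the pruning step, valid whenever $\diam(T)\geq 2$. (a) Every vertex realizing the eccentricity of a given vertex $v$ is a leaf: if $w$ attains $d(v,w)=e(v)$ but $\deg(w)\geq 2$, then a neighbour $w^\ast$ of $w$ lying off the $v$--$w$ path satisfies $d(v,w^\ast)=e(v)+1$, a contradiction. (b) For every $v\in V_{T'}$ with $e_T(v)\geq 2$ one has $e_{T'}(v)=e_T(v)-1$: the upper bound holds because by (a) all farthest vertices from $v$ are leaves and hence deleted, while the lower bound follows by taking a leaf $w$ with $d(v,w)=e_T(v)$ and noting that its neighbour on the $v$--$w$ path is a non-leaf, so it lies in $V_{T'}$ at distance $e_T(v)-1$ from $v$. (c) $\diam(T')=m-2$: the interior $v_1 v_2\cdots v_{m-1}$ of a diametrical path $v_0 v_1\cdots v_m$ survives, since its ends are leaves by (a), giving $\diam(T')\geq m-2$; conversely, the two endpoints of a diametrical path of $T'$ are non-leaves of $T$, so each can be extended by one edge off the path, producing a path of length $\diam(T')+2$ in $T$ and hence $\diam(T')\leq m-2$.

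From (b) I would deduce $C(T')=C(T)$ when $m\geq 3$: no leaf of $T$ is central, because a leaf $w$ with neighbour $w'$ has $e(w)=e(w')+1>\rad(T)$, so $C(T)\subseteq V_{T'}$; and since $e_{T'}=e_T-1$ on all of $V_{T'}$, minimizing $e_{T'}$ over $V_{T'}$ selects the same vertices as minimizing $e_T$ over $V_T$, with $\rad(T')=\rad(T)-1$. Then I would close by induction on $m$. The base cases $m=2$ (a star, whose hub is the unique centre with eccentricity $1=\lceil 2/2\rceil$) and $m=3$ (a double star, whose two adjacent middle vertices form the centre, each with eccentricity $2=\lceil 3/2\rceil$) are immediate. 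For $m\geq 4$, fact (c) gives $\diam(T')=m-2\geq 2$, so $T'$ has at least three vertices and the induction hypothesis applies: $C(T')$ is one vertex or two adjacent vertices with $e_{T'}=\lceil (m-2)/2\rceil=\lceil m/2\rceil-1$, and $\lvert C(T')\rvert=1$ exactly when $m-2$, equivalently $m$, is even. Since $C(T)=C(T')$, since adjacency is inherited from the subtree $T'$, and since $e_T=e_{T'}+1$ on these vertices by (b), the eccentricity becomes $\lceil m/2\rceil$ and the parity statement transfers verbatim. The main obstacle is the careful bookkeeping in facts (b) and (c)---ensuring the eccentricity drops by exactly one and the diameter by exactly two---since everything else is a routine consequence of these two identities.
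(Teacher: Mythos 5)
The paper does not prove this lemma at all: it is quoted as a known result (Jordan's theorem on tree centers) with citations to \cite{BaRa2012,MaKa2022,LiWB2022}, and the paper's subsequent arguments simply invoke it. Your proposal supplies the standard self-contained proof by iterated leaf-deletion, and it is correct. The key identities you isolate are exactly the right ones: (a) farthest vertices are leaves, (b) eccentricities of surviving vertices drop by exactly one, (c) the diameter drops by exactly two, from which $C(T')=C(T)$ and the induction close the argument. Two small points deserve explicit mention if this were written out in full. First, your use of (b) ``on all of $V_{T'}$'' needs the hypothesis $e_T(v)\geq 2$ for every surviving vertex; this follows from the triangle inequality $2e_T(v)\geq \diam(T)\geq 3$ in the regime $m\geq 3$ where you invoke it, but it should be said. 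Second, the base cases rest on the (easy, but unproved) classifications that diameter-$2$ trees are stars and diameter-$3$ trees are double stars; alternatively one can verify the base cases directly from (a) without classification. Neither point is a gap in substance. Compared with the paper, your route buys a self-contained verification of a fact the paper treats as a black box, at the cost of the bookkeeping in (b) and (c) that you correctly identify as the only real work.
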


Let  $G\in \mathscr{B}$ be such that $\diam(G)$ is odd and greater than three. Then by Lemmas 
\ref{diam of G and T_G} and 
\ref{center of  a tree}, $\lvert C(T_G)\rvert=2$. Assume that $C(T_G)=\{z_1,z_2\}$. Again 
by Lemma 
\ref{center of  a tree}, the vertices $z_1$ and $z_2$ are adjacent, and by  
Lemma \ref{center_rmk},  $z_1,z_2 \in C(G)$. Suppose that 
both  $z_1$ and $z_2$ are not in $C_G$. Then by Remark 
\ref{non_cutvertex_center},	
the block $B$ of $G$ containing $z_1$ and $z_2$ should be a bridge block such 
that the 
cut-vertices of $G$ in $B$ are in one partite set (say $V_1(B)$), and $z_1$ 
and $z_2$ are in the other partite set ($V_2(B)$), which is not possible 
because $z_1$ and $z_2$ are adjacent. Therefore, we arrive at the following 
remark.

\begin{remark}\label{center of T_G has a cutvertex}
Let  $G\in \mathscr{B}$ with  $\diam(G)\geq 4$. 
Then
$\lvert C(T_G)\cap C_G\rvert\geq 1$ whenever $\lvert C(T_G)\rvert=2$.
\end{remark}
The following result presents the collection of all  central vertices of graphs 
in $\mathscr{B}$. This will be used frequently in the proofs of the next 
section.

\begin{theorem}\label{center of G lemma}
Let $G\in \mathscr{B}$ be such that $\diam(G) \geq 4$ and let $T_G$ be its 
associated 
tree.
\begin{itemize}
\item [(i)] Suppose that $\diam(G)$ is even and $C(T_G)=\{z\}$ where $z\in 
V_1(B)$ 
for some block $B$. Then 
\begin{eqnarray*}
C(G)= \begin{cases}  
	\{z\} & \text{if } z\in C_G,\\
	V_1(B)& \text{if } z\not\in C_G.
\end{cases}
\end{eqnarray*}
\item [(ii)] Let $\diam(G)$ be odd and $C(T_G)=\{z_1,z_2\}$ where $z_1\in 
V_1(B)$ 
and $z_2\in V_2(B)$ for some block $B$. Then 
\begin{eqnarray*}
C(G)= \begin{cases}  
	\{z_1,z_2\} & \text{if } z_1,z_2\in C_G,\\
	\{z_1\}\cup V_2(B)& \text{if } z_1\in C_G ~\text{and}~ z_2\not\in C_G,\\
	\{z_2\}\cup V_1(B)& \text{if } z_1\not\in C_G ~\text{and}~ z_2\in C_G.
\end{cases}
\end{eqnarray*}
\end{itemize}
\end{theorem}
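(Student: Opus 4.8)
The plan is to combine the inclusion $C(T_G)\subseteq C(G)$ from Lemma~\ref{center_rmk} with a converse analysis that pins down exactly which vertices outside $V_{T_G}$ can be central. First I would record that $\rad(G)=\rad(T_G)$: taking any $z\in C(T_G)\subseteq C(G)$, Lemma~\ref{ecc_w.r.t T_G} gives $\rad(G)=e_G(z)=e_{T_G}(z)=\rad(T_G)$. Consequently a vertex $v\in V_{T_G}$ lies in $C(G)$ if and only if it lies in $C(T_G)$, since $e_G(v)=e_{T_G}(v)$. This reduces both parts to understanding the central vertices of $G$ that do \emph{not} belong to $V_{T_G}$.

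For such a vertex $w\in C(G)\setminus V_{T_G}$ we have $w\notin C_G$ (as $C_G\subseteq V_{T_G}$), so by Remark~\ref{non_cutvertex_center} the block $B''$ containing $w$ is a bridge block whose two cut-vertices lie in the partite set of $B''$ opposite to $w$. The decisive step is to pass from $w$ to its representative in $T_G$: by the construction of $T_G$, the set $NC_{B''}$ contributes exactly one non-cut-vertex $u'$ of $B''$, lying in the same partite set as $w$. Since $u'$ and $w$ are both non-cut-vertices of $B''$ in the same partite set, Lemma~\ref{eccentricity lemma}(i) forces $e_G(u')=e_G(w)=\rad(G)=\rad(T_G)$, and because $u'\in V_{T_G}$ this yields $u'\in C(T_G)$. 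Thus every central vertex of $G$ outside $V_{T_G}$ is witnessed by a central vertex of the tree.

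With this in hand both parts follow by case analysis on $C(T_G)$. For the forward inclusions, whenever a center vertex of $T_G$ is a non-cut-vertex sitting in the partite set $V_i(B)$ opposite to the two cut-vertices of a bridge block $B$, Lemma~\ref{eccentricity lemma}(i) shows that \emph{all} non-cut-vertices of $V_i(B)$ share the eccentricity $\rad(G)$; since a bridge block carries at most two cut-vertices and both lie in the other partite set, $V_i(B)$ consists entirely of non-cut-vertices, giving $V_i(B)\subseteq C(G)$, while the cut-vertices among $\{z,z_1,z_2\}$ belong to $C(G)$ by Lemma~\ref{center_rmk}. For the reverse inclusions I would take $w\in C(G)$: if $w\in V_{T_G}$ then $w\in C(T_G)$ and we are done; if $w\notin V_{T_G}$, the representative $u'\in C(T_G)$ produced above must equal one of the tree-center vertices, and since $u'$ is a non-cut-vertex it cannot coincide with a cut-vertex among $\{z,z_1,z_2\}$. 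Hence $u'$ must be the unique non-cut-vertex among them (which exists exactly in the cases $z\notin C_G$, or $z_1\notin C_G$, or $z_2\notin C_G$); as $u'$ lies in a single block by $(P_4)$, that block is $B$ and $w$ lands in the asserted partite set. In the subcases where every tree-center vertex is a cut-vertex, no such $w$ exists and $C(G)=C(T_G)$.

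The main obstacle is the bookkeeping in the second paragraph: correctly matching the partite-set labels in Remark~\ref{non_cutvertex_center} and in Lemma~\ref{eccentricity lemma}(i) (whose statement fixes $v_1,v_2$ in $V_1(B)$), and verifying that the representative $u'$ singled out by $NC_{B''}$ is genuinely the non-cut-vertex whose eccentricity Lemma~\ref{eccentricity lemma} equates with that of $w$. Once it is clear, via $(P_4)$, that a non-cut-vertex lies in exactly one block, the identification $B''=B$ and the placement of $w$ in the correct partite set are forced, and the three odd-diameter subcases and two even-diameter subcases all close by the same argument.
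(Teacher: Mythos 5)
Your proposal is correct and takes essentially the same route as the paper's proof: both rest on Lemma \ref{center_rmk}, Lemma \ref{ecc_w.r.t T_G}, Remark \ref{non_cutvertex_center}, Lemma \ref{eccentricity lemma}, and the construction of $T_G$ to replace any central non-cut-vertex by a representative in $V_{T_G}$ that is forced into $C(T_G)$, then to identify its block with $B$. Your packaging (the radius equality $\rad(G)=\rad(T_G)$ and one factored-out representative argument, with $(P_4)$ rather than $(P_3)$ used to pin down the block) is a cleaner organization of the paper's case-by-case contradiction arguments, not a genuinely different method.
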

\vspace{-.5cm}
\begin{proof}
$(i)$
\textbf{Case 1}:
Let $z\in C_G$. By Lemma \ref{center_rmk}, 
it is 
sufficient to show that $C(G)\subseteq\{z\}$. Let $x \in C(G)$. We first 
claim 
that  $x \in C_G$. On the contrary, assume that  $x \not\in C_G$. Then by 
Remark \ref{non_cutvertex_center} and Lemma \ref{eccentricity lemma}, $x\in 
B^{\prime}$ for some bridge block 
$B^{\prime}$ such that the cut-vertices of $G$ in $B^{\prime}$ belong to 
$V_1(B^{\prime})$ and $x 
\in V_2(B^{\prime})$ with  $e_G(x)=e_G(y)$ 
for all $y\in 
V_2(B)$. In particular, there exists $y_0\in V_2(B^{\prime})\cap V_{T_G}$ 
such that 
$e_G(x)=e_G(y_0)$ where the existence of $y_0$ is guaranteed by the 
construction 
of $T_G$. Note that $y_0\not \in C_G$ as $\lvert C_G\cap V_1(B^{\prime})\rvert=2$ and 
$y_0\in  V_2(B^{\prime})$.
By Lemma \ref{ecc_w.r.t T_G}, $e_G(y_0)=e_{T_G}(y_0)$, and we 
have 
$e_{T_G}(y_0)=e_G(x)\leq e_{G}(z)=e_{T_G}(z)$. Since $C(T_G)=\{z\}$, 
$e_{T_G}(y_0)\geq 
e_{T_G}(z)$ which implies	$y_0\in C(T_G)$.
That is, $y_0=z$, which is not possible because  $z\in C_G$ and $y_0 \not \in 
C_G$. Hence the claim 
$x \in C_G$ follows. Therefore, $x\in V_{T_G}$ and $e_G(x)=e_{T_G}(x)$. We have 
$e_{T_G}(x)\geq e_{T_G}(z)= e_{G}(z)\geq e_{G}(x)$,
which yields $e_{T_G}(x)=e_{T_G}(z)$. That is, $x\in C(T_G)=\{z\}$ and 
hence $x=z$. Therefore, $C(G)=\{z\}$.\\	
\textbf{Case 2}: Assume that $z\not \in C_G$. Since $z\in V_1(B)$, it follows
from Remark \ref{non_cutvertex_center} that $B$ must be a bridge block whose 
cut-vertices are in $V_2(B)$. Also,  by Lemma \ref{eccentricity lemma}, we get
$e_G(z)=e_G(u)$ for all $u\in V_1(B)$. Using the fact that $z\in 
C(T_G)\subseteq 
C(G)$, we  have $V_1(B)\subseteq C(G)$. To prove 
$C(G)\subseteq V_1(B)$, let $x \in C(G)$. We now show that  $x \not \in C_G$. 
Suppose that  $x  \in C_G$. Then  $x\in V_{T_G}$ and by Lemma \ref{ecc_w.r.t 
T_G}, we have
$e_{T_G}(x)=e_G(x)\leq e_G(z)=e_{T_G}(z)\leq e_{T_G}(x)$. This implies
$e_{T_G}(z)= e_{T_G}(x)$ and hence
$x\in 
C(T_G)=\{z\}$. That is, $x=z$, which is absurd as $x\in C_G$ and $z\not 
\in C_G$. Therefore,  $x \not \in C_G$.
%
Since $x\in C(G)$, by Remark \ref{non_cutvertex_center}, 
$x  \in V_2(B^{\prime})$ for some bridge block 
$B^{\prime}$ with $\lvert V_1(B^{\prime})\cap C_G\rvert=2$. As in the previous case, we 
have $e_G(x)=e_G(y_0)$  for some 
$y_0\in   V_2(B^{\prime})\cap V_{T_G}$ and $y_0=z$. This yields $y_0\in 
V_1(B)$. Since $y_0\in V_B\cap 
V_{B^\prime}$ and $y_0\not \in C_G$, it follows by $(P_3)$ that  
$B=B^{\prime}$ and 
$V_2(B^{\prime})=V_1(B)$. 
This implies $x\in V_1(B)$, which completes the proof in   this case.\\  

\noindent
$(ii)$ It is similar to the proof of item $(i)$.
\end{proof}
\begin{remark}
It is clear from Theorem \ref{center of G lemma} that all the
central vertices of a graph $G\in \mathscr{B}$ with $\diam(G)\geq 4$  
lie completely in one specific block $B$ of $G$. In particular, all the 
central vertices of $G$ belong to exactly one 
partite set of $B$ if and only if  $\diam(G)$ is even. 
\end{remark}

\section{Inertia, spectral symmetry and irreducibility }\label{inertia and spectral symmetry}

The  inertias of the 
distance matrices of  trees, and the eccentricity matrices of 
lollipop graphs,  trees,  clique trees and coalescence of certain 
graphs 
are computed in the literature, see 
\cite{Bapa2010,LiWB2022,MaKa2022,MGKA2020,PaSP2021}.  Along 
these lines, here we find the inertias of the eccentricity matrices 
of  graphs in $\mathscr{B}$.

It is shown that
the eigenvalues of the adjacency matrix $A(G)$ of a graph $G$ are 
symmetric about the origin   if and only if $G$ 
is 	bipartite \cite{Bapa2010}. In \cite{MaKa2022}, 
it is proved  that the eigenvalues of 
the eccentricity matrix $E(T)$ of a tree $T$  are symmetric 
about the origin if and only if  $\diam(T)$ is odd. 
A similar equivalence is established 
for a 
subclass of 
block graphs (clique trees) in  \cite{LiWB2022}. Motivated by these, in this 
section, we prove an analogous result for  the class 
$\mathscr{B}$.
In the last part of this section, we prove the irreducibility of  the 
eccentricity matrices of  graphs in $\mathscr{B}$.

\vspace{-.5cm}
\subsection{ Inertia and spectral symmetry  of eccentricity matrices 
of graphs in $\mathscr{B}$ with odd diameters}\label{odd_diam}

In this subsection, we consider  graphs in $\mathscr{B}$ with odd diameters and 
find the inertias of the eccentricity matrices of these graphs. Also, we show the 
spectral symmetry (with respect to the origin) of the above considered matrices.
We begin with an example.
\begin{example}
Consider the bi-block graph $G\in \mathscr{B}$ and the associated tree $T_G$ given in Example 
\ref{Illustrating_eg}. For the purpose of illustrating Theorem \ref{odd_thm}, let us relabel the vertices $v_7$, $v_{9}$ and  $v_{10}$ by $z_1$, $z^{\prime}$ and $z_2$, respectively. Note that $\diam(G)=\diam(T_G)=7$, and 
$C(T_G)=\{z_1,z_2\}$ where $z_1$ and $z_2$ are adjacent with $z_1 \in C_G$ and 
$z_2 \not \in C_G$. Let $B$ be the block of $G$ containing the edge $z_1z_2$ such 
that 
$z_1\in V_1(B)$ and $z_2\in V_2(B)$.
It is clear that	$B$ is a bridge block with  $z^{\prime}\in C_G\cap V_1(B)$ and $z^{\prime}\neq z_1$. 	Let
$G^{\prime}$ be 	the subgraph of $G$ obtained by deleting all the 
edges of the block $B$ in $G$. Let $C_1$ and $C_2$ be the  components of 
$G^{\prime}$  
containing the vertices $z_1$ and $z^{\prime}$, respectively.
Clearly, the 
sets 
\begin{align*}
U_1 &= \{ x\in V_{C_1}: d(x,z_1)=3\}
=\{v_1,v_2,v_6\}, \\
U_{2} &= \{ y\in V_{C_1}:0 \leq d(y,z_1)< 3\}
=\{v_3,v_4,v_5,z_1\}, \\
U_3 &= \{ x\in V_{C_2}: d(x,z^{\prime})=2\}
=\{v_{15},v_{16}\},\\
U_{4} &= \{ y\in V_{C_2}:0 \leq d(y,z^{\prime})<2\}
=\{z^{\prime},v_{13},v_{14}\},\\
U_{5} &= V_1(B)\setminus\{z_1,z^{\prime}\}
=\{v_8\}, \quad \text{and}\\
U_{6} &= V_2(B)
=\{z_2,v_{11},v_{12}\}~\text{	partitions $V_{G}$.}~
\end{align*}
%
Then  the 
eccentricity 
matrix 	 of $G$ can be written in the following   form:
\begin{equation*}
	E(G)=	\begin{blockarray}{ccccccc}
		& U_1 & U_2 & U_3 & U_4&U_5&U_6 \\
		\begin{block}{c(cccccc)}
			U_1 & O&O & 7J  & P & 5J&4J\\
			U_2 & O&O& Q & O&O&O\\
			U_3 &7J^{\prime} & Q^{\prime} & O & O&O&O\\
			U_4 & P^{\prime} & O& O & O&O&O\\
			U_5 & 5J^{\prime} & O& O & O&O&O\\
			U_6 & 4J^{\prime} & O& O & O&O&O\\
		\end{block}
	\end{blockarray},
\end{equation*}
where
	\begin{equation*}
				P=
			\begin{blockarray}{cccc}
					& v_{13} & v_{14} &  z^{\prime} \\
					\begin{block}{c(ccc)}
							v_1 &6&6&5\\
							v_2 &6&6&5\\
							v_6 &6&6&5\\
						\end{block}
				\end{blockarray},\quad\text{and}\quad
			Q=
			\begin{blockarray}{ccc}
					& v_{15}  & v_{16} \\
					\begin{block}{c(cc)}
							v_3 &6&6\\
							v_4 &6&6\\
							v_5 &5&5\\
							z_1 &4&4\\
						\end{block}
				\end{blockarray}.
		\end{equation*}
Using SAGEMATH, it is computed 
that the eigenvalues 
of $E(G)$ are $0$,  $30.0375$, $11.3025$, $-11.3025$ and $30.0375$ with 
respective multiplicities $12,1,1,1$ and $1$. 
Thus,   the spectrum of $E(G)$ is symmetric with 
respect 
to the origin, and $\In(E(G))=(2,2,12)$.
\end{example}
\begin{theorem}\label{odd_thm}
Let $G \in \mathscr{B}$ be such that $\diam(G)=2m+1$, $m\geq 2$. Then the 
following statements hold:
\begin{itemize}
	\item [(i)] $\In(E(G))=(2,2,n-4)$, where $n$ is the number of vertices 
	of $G$.
	\item [(ii)] The spectrum of $E(G)$ is symmetric with respect to the 
	origin.
\end{itemize}
\end{theorem}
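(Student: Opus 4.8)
The plan is to produce a vertex ordering in which $E(G)$ becomes block anti-diagonal, and then to show that its off-diagonal block has rank exactly two; both parts of the theorem drop out of this single structural fact. First I would fix the centre. Since $\diam(G)=2m+1$ is odd, Lemma~\ref{diam of G and T_G} together with Lemma~\ref{center of  a tree} gives $\lvert C(T_G)\rvert=2$, say $C(T_G)=\{z_1,z_2\}$ with $z_1,z_2$ adjacent and $e_{T_G}(z_i)=m+1$, and by Remark~\ref{center of T_G has a cutvertex} at least one of them, say $z_1$, lies in $C_G$. Let $B$ be the block carrying the edge $z_1z_2$, let $W_1$ be the vertex set of the component of $z_1$ in the graph obtained from $G$ by deleting the edges of $B$, and set $W_2:=V_G\setminus W_1$. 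Using $(P_3)$ and $(P_4)$ one checks that $z_1$ is the only vertex of $B$ in $W_1$ (every other vertex of $B$ is isolated or separated into a different piece once the edges of $B$ are removed), so every $W_1$--$W_2$ path runs through $z_1$; hence $d(x,w)=d(x,z_1)+d(z_1,w)$ for all $x\in W_1$ and $w\in W_2$.

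Next I would prove the anti-diagonal form: ordering the vertices as $(W_1\mid W_2)$, one has $E(G)=\left(\begin{smallmatrix} O & M\\ M^{\prime} & O\end{smallmatrix}\right)$, i.e. $E(G)_{xy}=0$ whenever $x,y$ lie on the same side. The mechanism is that, by Theorem~\ref{center of G lemma} and Lemma~\ref{eccentricity lemma}, every vertex attains its eccentricity strictly across $B$: all of $W_1$ lies within distance $m$ of $z_1$, so for $x,y\in W_1$ one has $d(x,y)\le d(x,z_1)+m$, whereas crossing $B$ to the far diametral endpoints costs $d(x,z_1)+m+1$; thus $d(x,y)<e(x)$ and the same-side entry vanishes, and symmetrically on the $W_2$ side. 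Granting this, the signature matrix $S=\left(\begin{smallmatrix} I & O\\ O & -I\end{smallmatrix}\right)$ satisfies $SE(G)S=-E(G)$, so $E(G)$ and $-E(G)$ are similar; the spectrum of $E(G)$ is therefore symmetric about the origin, which is~(ii), and in particular $i_+(E(G))=i_-(E(G))$.

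It then remains to show $\rank(M)=2$, which by the anti-diagonal form yields $\In(E(G))=(2,2,n-4)$ and settles~(i). Here I would split $W_1$ into $U_1:=\{x\in W_1:d(x,z_1)=m\}$, the $z_1$-side diametral endpoints (all with $e(x)=2m+1$), and $U_2:=W_1\setminus U_1$. For $x\in U_1$ the identity $d(x,w)=m+d(z_1,w)$ makes the $x$-row of $M$ depend only on $w$, so all rows coming from $U_1$ coincide; for $x\in U_2$ the eccentricity is attained only at the far ($z_2$-side) diametral endpoints $U_3$, and $d(x,w)=d(x,z_1)+d(z_1,w)$ is constant over $w\in U_3$, so that row is a scalar multiple of the indicator of $U_3$. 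Hence $M$ has only two row-types and $\rank(M)\le 2$; they are independent because the $U_1$-row is nonzero outside $U_3$ while the $U_2$-rows vanish there, and both are nonzero on the nonempty set $U_3$ (the diameter is realised), with $z_1\in U_2$ nonempty. This forces $\rank(M)=2$.

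The computational heart, and the main obstacle, is the claim used in the second paragraph that every vertex attains its eccentricity strictly across $B$, so that all same-side entries vanish and $M$ carries exactly two row-types. Verifying it requires tracking, through Lemma~\ref{eccentricity lemma}, Remark~\ref{non_cutvertex_center} and the description of $C(G)$ in Theorem~\ref{center of G lemma}, the different positions of $\{z_1,z_2\}$ relative to $C_G$ (both cut-vertices, or exactly one) and of a possible second cut-vertex of $B$ in $z_1$'s partite set; these alter which vertices populate the near set $U_2$ and its $W_2$-counterpart, but the distance additivity through $z_1$ (and its $z_2$-side mirror) keeps each block of $M$ of rank one and leaves the final rank equal to two.
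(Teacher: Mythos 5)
Your overall strategy---find a bipartition of $V_G$ making $E(G)$ block anti-diagonal, deduce (ii) by signature similarity, and get (i) from $\rank(M)=2$---is sound in spirit and close to what the paper does, but the specific bipartition you chose is wrong in one of the two cases, and this is a genuine gap rather than a verification burden. The paper splits the proof according to whether $z_2\in C_G$ or not. When $z_2\notin C_G$ (the paper's Case 1, matrix (\ref{odd_case1})), your partition $(W_1,W_2)$ works: all nonzero blocks are between $U_1\cup U_2=W_1$ and $U_3\cup U_4\cup U_5\cup U_6=W_2$. But when both $z_1,z_2\in C_G$ and $\lvert V_2(B)\rvert\geq 2$ (the paper's Case 2, matrix (\ref{odd_case2})), your claim that ``every vertex attains its eccentricity strictly across $B$'' is false. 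A non-cut vertex $b\in U_6=V_2(B)\setminus\{z_2\}$ has $e(b)=m+2$ by Lemma \ref{eccentricity lemma}(i), and this eccentricity is attained at the far endpoints on the $z_2$ side: for $x\in U_3$ (so $d(x,z_2)=m$ inside $z_2$'s component) one gets $d(b,x)=d(b,z_2)+d(z_2,x)=2+m=\min\{e(b),e(x)\}$, hence $E(G)_{b,x}=m+2\neq 0$. Since $b$ and $x$ both lie in your $W_2$, the same-side entries do not all vanish and $SE(G)S=-E(G)$ fails. A minimal counterexample: let $B=K_{2,2}$ with parts $\{z_1,a\}$, $\{z_2,b\}$ and attach a path of length $m$ at each of $z_1$ and $z_2$; then $E(G)_{b,q}=m+2\neq 0$ where $q$ is the far endpoint of the path hanging at $z_2$, yet $b,q\in W_2$.

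The repair is precisely the asymmetric grouping encoded in the paper's eigenvector argument for (ii): the sign pattern there is $(\mathbf{v_1},\mathbf{v_2},-\mathbf{v_3},-\mathbf{v_4},-\mathbf{v_5},\mathbf{v_6})$, i.e.\ the correct bipartition places $U_6=V_2(B)\setminus\{z_2\}$ \emph{with} the $z_1$-side sets $U_1,U_2$, while $U_5=V_1(B)\setminus\{z_1\}$ goes with the $z_2$ side---not the partition by components of $G$ minus the edges of $B$. With that corrected partition your rank argument does survive: the rows indexed by $U_6$ are $(m+2)$ times the indicator of $U_3$, hence of the same type as the $U_2$ rows, so the off-diagonal block still has exactly two row types and rank $2$, giving $\In(E(G))=(2,2,n-4)$. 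Note that your closing paragraph presents the same-side-vanishing claim as the ``computational heart'' to be checked case by case; the point is that no amount of checking rescues the partition $(W_1,W_2)$ itself---the partition must change in Case 2. For comparison, the paper does not derive (i) from anti-diagonality at all: it computes $E(G)$ in explicit block form, bounds $\rank(E(G))\leq 4$ by row dependencies, exhibits a $4\times4$ principal submatrix of inertia $(2,2,0)$, and invokes interlacing; once your bipartition is fixed, your route gives (i) and (ii) simultaneously and is a legitimate alternative.
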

\vspace{-.5cm}
\begin{proof}
Consider the tree $T_G$ associated with $G$.
By Lemmas \ref{diam of G and T_G} and  \ref{center of  a tree}, 
$\diam(T_G)=2m+1$ and $\lvert C(T_G)\rvert=2$. Let $C(T_G)=\{z_1,z_2\}$. Again by 
Lemma \ref{center of  a tree}, $z_1$ 
and $z_2$ are adjacent, and $e_{T_G}(z_i)=m+1$ for $i=1,2$. Also, by Lemmas 
\ref{ecc_w.r.t T_G} and
\ref{center_rmk}, $z_1,z_2 \in 
C(G)$ and $e_{G}(z_i)=m+1$ for $i=1,2$.
Let $z_1\in V_1(B)$ and $z_2\in V_2(B)$ for some block $B$ of $G$. By 
Remark \ref{center of T_G has a cutvertex}, $z_i \in C_G$ for at least one 
$i$. 
Without loss of generality, assume that $z_1 \in C_G$.\\

	\noindent$(i)$ \textbf{Case 1}: Suppose that $z_2\not\in C_G$. Since 
	$z_2\in \left(C(G)\cap V_2(B)\right)\setminus C_G$, by 
	Remark 
	\ref{non_cutvertex_center}, $B$ is a
	bridge block whose cut-vertices are in $V_1(B)$. Let $z^{\prime}\neq 
	z_1$ be the
	another cut-vertex of $G$ in $B$. By  $(P_1)$, we have  $\lvert V_2(B)\rvert\geq 
	2$ as  $\{z_1,z^{\prime}\}\subseteq V_1(B)$.
	Now obtain
	a subgraph $G^{\prime}$ from $G$ by deleting all the 
	edges of
	$B$. Clearly,  $V_G=V_{G^{\prime}}$. By Remark \ref{p6} and $(P_3)$,  
	$G^{\prime}$ contains at least 
	four components as $\lvert V_i(B)\rvert\geq 2$ for all $i=1,2$, and the vertices 
	$z_1$ and $z^{\prime}$ lie in different components. Let $C_1$ and 
	$C_2$ be the non-trivial components of 
	$G^{\prime}$  
	containing the vertices $z_1$ and $z^{\prime}$, respectively. 
	Let $x\in V_{C_1}$ and $y\in V_{C_2}$. Then by $(P_3)$,
	\begin{equation}\label{*}
		z_1,z^{\prime} \in \bar{P}(x,y)~\text{for all paths $\bar{P}(x,y)$ 
			between 
			$x$ and 
			$y$ in $G$}.
	\end{equation}
	We prove the following inequalities to 
	find a partition for $V_G$: 
	\begin{equation}\label{**}
		d(a,z_1)\leq m~ \text{for all $a\in V_{C_1}$}\quad\text{and}\quad
		d(b,z^{\prime})\leq m-1~ \text{for all $b\in V_{C_2}$}.
	\end{equation}
	Since $e(z_1)=m+1$, we have $d(a,z_1)\leq m+1$. Suppose $d(a,z_1)= m+1$ then 
	$d(a,z_2)= m+2$ which is not possible because $e(z_2)=m+1$. Hence 
	$d(a,z_1)\leq m$
	which proves the first inequality in 
	$(\ref{**})$. Suppose that the 
	second inequality in 
	$(\ref{**})$ does not hold. That is, there exists  $b\in V_{C_2}$ such that 
	$d(b,z^{\prime})> m-1$. Since $z_1 \in V_{C_1}$ and $b\in V_{C_2}$, using 
	(\ref{*}), we write 	$d(b,z_1)=d(b,z^{\prime})+d(z^{\prime},z_1)
	>(m-1)+d(z^{\prime},z_1)=m+1$. This implies 	$e(z_1)>m+1$, which is 
	a contradiction. So, the second inequality in 
	$(\ref{**})$  holds.	
	
	We now show that the non-trivial components of $G^{\prime}$ are precisely $C_1$ and $C_2$.
	Suppose that $x\in V_G$ and $x\not \in V_{C_1}\cup V_B$.  Let $P(x,z_1)$ be a shortest path between $x$ and $z_1$ in $G$. 
	If $z^{\prime}\not \in P(x,z_1)$ then by Remark \ref{p6}, $V_{P(x,z_1)}\cup V_B=\{z_1\}$. Therefore, $P(x,z_1)$ is a path in  $G^{\prime}$. Since $C_1$ is a component of $G^{\prime}$ containing $z_1$, we have $x\in V_{C_1}$ which is a contradiction. 
	Hence   $z^{\prime} \in P(x,z_1)$. Note that the subpath $P(x,z^{\prime})$ obtained from the shortest path $ P(x,z_1)$ does not contain any vertex other than $z^{\prime}$ from the block $B$.  This implies that $P(x,z^{\prime})$ is a path in  $G^{\prime}$. Since $z^{\prime}\in V_{C_2}$, we have $x\in V_{C_2}$. Thus,  the components of $G^{\prime}$ other than $C_1$ and $C_2$ are  simply complete 
	graphs of order one which 
	arise from $V_B\setminus\{z_1,z^{\prime}\}$.	We now define the following sets to obtain 
	a 
	partition for $V_{G^{\prime}}$:
	\begin{align*}
		U_1 &= \{ x\in V_{C_1}: 
		d(x,z_1)=m\},\quad\quad\quad\quad\quad~~~~
		U_{2} = \{ y\in V_{C_1}:0 \leq d(y,z_1)< m\},\\
		U_3 &= \{ x\in V_{C_2}: 
		d(x,z^{\prime})=m-1\},\quad\quad\quad\quad~~
		U_{4} = \{ y\in V_{C_2}:0 \leq d(y,z^{\prime})<m-1\},\\
		U_{5} &= V_1(B)\setminus\{z_1,z^{\prime}\}~ \text{whenever 
			$\lvert V_1(B)\rvert\geq 3$}, 
		~ \text{and}~~
		U_{6} = V_2(B).
	\end{align*}
	Clearly, $V_{G^{\prime}}=\cup_{l=1}^{6}U_l$ and $U_l\cap U_k=\emptyset$ 
	for $l\neq k$. Also, $z_1\in U_2$, $z^{\prime}\in U_{4}$ and $z_2 \in 
	U_{6}$. 	To see 
	$U_1\neq \emptyset$ and $U_3\neq \emptyset$,  consider $e(z_1)$ and 
	$e(z^{\prime})$. Let 	$e(z_1)=d(b,z_1)$ for some $b\in V_G$. Since 	
	$e(z_1)=m+1 \geq 3$, we get $b \not \in V_B$. Also, by (\ref{**}),  $b \not 
	\in V_{C_1}$. Therefore $b  \in V_{C_2}$, and by (\ref{*}), 
	$d(b,z_1)=d(b,z^{\prime})+d(z^{\prime},z_1)=d(b,z^{\prime})+2$. This implies
	$d(b,z^{\prime})=m-1$ and hence $b\in U_3$. Since $z_2 \in V_B\setminus 
	C_G$, by item $(i)$ of Lemma \ref{eccentricity lemma}, we have $e(z_2)= 
	\max \{e(z_1),e(z^{\prime})\}-1=\max \{m+1,e(z^{\prime})\}-1$. Since 
	$e(z_2)=m+1$, it follows that $e(z^{\prime})=m+2$. Let 
	$e(z^{\prime})=d(a,z^{\prime})$. 
	Then by (\ref{**}), $a \in V_{C_1}$, and by (\ref{*}), we write 
	$d(a,z^{\prime})=d(a,z_1)+d(z_1,z^{\prime})=d(a,z_1)+2$. This gives 
	$d(a,z_1)=m$ and hence $a \in U_1$. 
	
	\noindent	\textbf{Subcase 1.1}:		Assume that $\lvert V_1(B)\rvert\geq 3$. Then $U_5\neq 
	\emptyset$. 
	Thus 
	$\{U_1,U_2,\ldots U_{6}\}$ partitions $V_{G^{\prime}}$ and hence $V_G$ 
	as 
	well. 	To find $E(G)$ explicitly, we now compute  the eccentricity of each 
	vertex 
	in 	$G$.

	Let  $a\in U_1 \cup U_2$. To find $e(a)$, consider 
	$d(a,b_0)$ 
	where $b_0 \in U_3$ is fixed. By (\ref{*}),
	\begin{equation}\label{ecc in U_2}
		e(a)\geq d(a,b_0)=d(a,z_1)+d(z_1,z^{\prime})+d(z^{\prime},b_0)
		=d(a,z_1)+(m+1).
	\end{equation}
	We claim that $e(a)=d(a,z_1)+(m+1)$. Let $e(a)=d(a,x)$ for some $x\in V_G$. 
	By (\ref{ecc in U_2}), we have $e(a)\geq m+1$. If $x  \in V_{C_1}$ then  by 
	(\ref{**}),
	$e(a)=d(a,x)\leq d(a,z_1)+d(z_1,x)\leq d(a,z_1)+m<d(a,z_1)+(m+1)$, a 
	contradiction to (\ref{ecc in U_2}). 
	So, $x \not \in V_{C_1}$. Similarly, we see that
	$x \not \in B$. 
	Therefore, $x\in V_{C_2}$. By (\ref{**}),
	$d(x,z^{\prime})\leq m-1$. This implies 
	$d(a,x)\leq d(a,z_1)+d(z_1,z^{\prime})+d(z^{\prime},x)
	\leq d(a,z_1)+2+(m-1)= d(a,z_1)+(m+1) $ and hence $e(a)\leq 
	d(a,z_1)+(m+1)$. Therefore, $e(a)= d(a,z_1)+(m+1)$ for all $a\in U_1 \cup 
	U_2$. In particular, if $a\in U_1$ then $e(a)=2m+1$ and  if $a\in U_2$ then 
	$e(a)<2m+1$. 
	Similarly, it can be shown that $e(b)= d(b,z^{\prime})+(m+2)$ for all $b\in 
	U_3 \cup U_4$.
	Since $e(z^{\prime})=m+2$ and $e(z_1)=m+1$, by Lemma \ref{eccentricity lemma}, we have 
	$e(a)=m+2$ for all $a\in U_5$ and $e(a)=m+1$ for all  $a\in U_6$.

We now compute the entries of $E(G)$. Let $a \in U_1$ and $b\in V_G$. Then 
$e(a)\geq e(b)=\min\{e(a),e(b)\}$. If $b \in U_1 \cup U_2$ then
$	d(a,b)\leq d(a,z_1)+d(z_1,b)= m+d(z_1,b)<e(b)$,
and hence $E(G)_{a,b}=0$. 
Suppose that  $b \in U_3 \cup U_4$. Using (\ref{*}), we write
$d(a,b)=d(a,z_1)+d(z_1,z^{\prime})+d(z^{\prime},b)
=m+2+d(z^{\prime},b)= e(b)$,
and so $E(G)_{a,b}=2m+1$ if $b\in U_3$ and $E(G)_{a,b}=e(b)$ if $b \in U_4$.  
If $b \in U_5 \cup U_6$ then by $(P_3)$ and Remark \ref{p6}, $z_1\in 
\bar{P}(a,b)$ for all paths $\bar{P}(a,b)$. Therefore,
$d(a,b)=d(a,z_1)+d(z_1,b)=m+d(z_1,b)= e(b)$. Thus, $E(G)_{a,b}$ is
$m+2$ if $b\in U_5$ and  $m+1$ if $b \in U_6$.  Since $m\geq 2$, we  see  that $E(G)_{a,b}=0$ for all $a,b\in U_6$. It can be shown that $d(x,y)=e(x)=\min\{e(x),e(y)\}$ if $x \in U_2$ 
and $y\in U_3$, and  $d(x,y)<\min\{e(x),e(y)\}$ for all other cases. 
Therefore, the eccentricity matrix 
$E(G)$  can be written as
\begin{equation}\label{odd_case1}
	E(G)=
	\begin{blockarray}{ccccccc}
		& U_1 & U_2 & U_3 & U_4&U_5&U_6 \\
		\begin{block}{c(cccccc)}
			U_1 & O&O & (2m+1)J  & P & (m+2)J&(m+1)J\\
			U_2 & O&O& Q & O&O&O\\
			U_3 &(2m+1)J^{\prime} & Q^{\prime} & O & O&O&O\\
			U_4 & P^{\prime} & O& O & O&O&O\\
			U_5 & (m+2)J^{\prime} & O& O & O&O&O\\
			U_6 & (m+1)J^{\prime} & O& O & O&O&O\\
		\end{block}
	\end{blockarray},
\end{equation}
where $P_{a*}=P_{b*}$ for all $a,b \in U_1$ and $Q_{*u}=Q_{*v}$ for all 
$u,v \in U_3$. In fact, if 	
$U_{3}=\{u_{31},u_{32},\ldots,u_{3\lvert U_{3}\rvert}\}$ and 
$U_{4}=\{u_{41},u_{42},\ldots,u_{4\lvert U_{4}\rvert}\}$ then
$P_{a*}=\left(e(u_{41}), e(u_{42}), \ldots,
e(u_{4\lvert U_{4}\rvert)}\right)$ for all $a \in U_1$ and 	
$Q_{*v}=\left(e(u_{31}), e(u_{32}), \ldots,
e(u_{3\lvert U_{3}\rvert)}\right)^{\prime}$ for all $v \in U_3$.
The 
structures of $P$ 
and $Q$ give $\rank(E(G)\left([U_i\mid V_G]\right))=1$ for all 
$i=1,2,3,4$. Also,
for some 
fixed $x\in	U_{4}$, 
\begin{equation*}
	E(G)_{a*}= \frac{1}{e(x)}
	\begin{cases}  
		(m+2) E(G)_{x*} &\text{for all $ a \in U_{5}$},\\
		(m+1)  	E(G)_{x*} &\text{for all $ a \in U_{6}$}.
	\end{cases}
\end{equation*} 
Hence $\rank(E(G)) \leq 4$. Fix $x_1\in U_1$, $x_2\in U_3$. Let 
$P(x_1,z_1)$ and $P(x_2,z^{\prime})$ be shortest paths. 
Choose $y_1\in 
P(x_1,z_1)$ and $y_2\in P(x_2,z^{\prime})$ such that 
$x_i$ is adjacent to $y_i$ for $i=1,2$. To see $\rank(E(G))\geq 4$, 
consider the principal submatrix $R$ of $E(G)$, which is given by
\begin{equation*}
	R=	\bordermatrix{
		&x_1        & x_2   &y_1       &y_2\cr
		x_1   &0 & 2m+1 & 0 & 2m\cr
		x_2    &	2m+1 & 0 & 2m & 0 \cr
		y_1    &0 & 2m & 0 & 0 \cr
		y_2    & 2m & 0 & 0  &0\cr
	}.
\end{equation*}
Note that $\det(R)\neq 0$. This implies $\rank(E(G))\geq 4$ and hence 
$\rank(E(G))= 4$. Applying Theorem \ref{Schurcomplement_det_formula} to  
$R$, we get
$\In(R)=(2,2,0)$, where we have taken 	$A=\left(\begin{smallmatrix} 
	0 & 2m+1\\
	2m+1 & 0
\end{smallmatrix}\right)$. Therefore, by Theorem 
\ref{Interlacing 
	theorem},  $n_-(E(G))\geq2$ and $n_+(E(G))\geq2$. Since 
$\rank(E(G))= 4$, we 
have $\In(E(G))=(2,2,n-4)$.

\noindent		\textbf{Subcase 1.2}:
Suppose that $\lvert V_1(B)\rvert=2$. Then $U_5= \emptyset$. Now, it is 
easy 
to see that $\{U_1,U_2,U_3,U_4,U_6\}$  partitions $V_G$, and 
$E(G)$ is  the principal submatrix 
of the matrix given in (\ref{odd_case1}), obtained 
by deleting the rows 
and columns corresponding to 
the vertices in $U_5$. Hereafter, the proof is the same as in subcase 1.1.

\noindent		\textbf{Case 2}: Assume that $z_2\in C_G$. That is, $B$ is a
bridge block such that  $z_1\in V_1(B)\cap C_G$ and $z_2\in V_2(B)\cap 
C_G$.
We obtain the non-trivial components $C_1$ and $C_2$  of $G^{\prime}$  
such that $z_1 \in V_{C_1}$ and $z_2\in V_{C_2}$ where
$G^{\prime}$ is the subgraph constructed from $G$ by deleting all the 
edges of
$B$.  As in case 1, it is easily  seen that  $ d(x,z_1)\leq m$ for all 
$x\in V_{C_1}$ and $ d(y,z_2)\leq m$ for all $y\in V_{C_2}$.
Define
\begin{align*}
	U_1 &= \{ x\in V_{C_1}: d(x,z_1)=m\},\quad
	U_{2} = \{ y\in V_{C_1}:0 \leq d(y,z_1)< m\},\\
	U_3 &= \{ x\in V_{C_2}: d(x,z_2)=m\},\quad
	U_{4} = \{ y\in V_{C_2}:0 \leq d(y,z_2)<m\},\\
	U_{5} &= V_1(B)\setminus\{z_1\}~ \text{whenever $\lvert V_1(B)\rvert\geq 2$}, 
	\quad \text{and}\\
	U_{6} &= V_2(B)\setminus\{z_2\}~\text{whenever $\lvert V_2(B)\rvert\geq 2$}.
\end{align*}
\textbf{Subcase 2.1}: Suppose that $\lvert V_1(B)\rvert\geq 
2$. 
Then, by ($P_1$), $\lvert V_2(B)\rvert\geq 2$. We now see that
$\{U_1,U_2,\ldots, U_{6}\}$ partitions $V_G$.   Let  
$a,b \in V_G$. It is 
easy to verify that
\begin{align*}
	e(a)&= \begin{cases}  
		2m+1 & \text{if } a\in U_1\cup U_3,\\
		d(a,z_1)+(m+1) & \text{if } a\in U_2,\\
		d(a,z_2)+(m+1) & \text{if } a\in U_4,\\
		m+2 & \text{if } a\in U_5\cup U_6.
	\end{cases}
\end{align*}
Then $E(G)$ in this case takes the following form:
\begin{equation}\label{odd_case2}
	E(G)=
	\begin{blockarray}{ccccccc}
		& U_1 & U_2 & U_3 & U_4&U_5&U_6 \\
		\begin{block}{c(cccccc)}
			U_1 & O&O & (2m+1)J  & P & (m+2)J&O\\
			U_2 & O&O& Q & O&O&O\\
			U_3 &(2m+1)J^{\prime} & Q^{\prime} & O & O&O&(m+2)J\\
			U_4 & P^{\prime} & O& O & O&O&O\\
			U_5 & (m+2)J^{\prime} & O& O & O&O&O\\
			U_6 & O & O& (m+2)J^{\prime} & O&O&O\\
		\end{block}
	\end{blockarray},
\end{equation}
where all the rows of $P$ are identical and all the columns of $Q$ are 
the same. 
Hereafter, the proof is similar to that of case 1.

Let $\widetilde{E(G)}$ denote the $4 \times 4$ block matrix, which is 
the  leading principal submatrix 
of $E(G)$  in (\ref{odd_case2}).

\noindent	\textbf{Subcase 2.2}: Let $\lvert V_1(B)\rvert=1$. Then by ($P_1$),  $\lvert V_2(B)\rvert= 1$. Note that
$\{U_1,U_2,U_3, U_{4}\}$ partitions $V_G$, and  
$E(G)=\widetilde{E(G)}$. The rest of the proof is similar 
to case $1$.\\

\noindent$(ii)$		 Consider the matrix $E(G)$  given in (\ref{odd_case2}).
Suppose that 
$$E(G)\textbf{v}=\mu\textbf{v} ~\text{for some $0\neq \mu \in \mathbb{R}$ and 
}~
\textbf{v}= (\mathbf{v_{1}}^{\prime},\mathbf{v_{2}}^{\prime},\ldots,\mathbf{v_{6}}^{\prime})^{\prime} \in \mathbb{R}^n,$$
where $\mathbf{v}$ is partitioned according to the partition of $E(G)$. Then it 
is not 
difficult to verify that $E(G)\tilde{\textbf{v}}=-\mu\tilde{\textbf{v}}$
where $\tilde{\mathbf{v}} =(\mathbf{v_{1}}^{\prime},\mathbf{v_{2}}^{\prime},-\mathbf{v_{3}}^{\prime},-\mathbf{v_{4}}^{\prime},-\mathbf{v_{5}}^{\prime},\mathbf{v_{6}}^{\prime})^{\prime}$. 	Also, 
the multiplicities of  $\mu$ and $-\mu$ are equal. 

Similar to the above  case, by employing suitable eigenvectors, 
the result can be verified  for the remaining subcases of item $(i)$.
\end{proof}
For $m=1$, Theorem \ref{odd_thm} need not be true. The following example 
illustrates this.
\begin{example}\label{counter ex}
Consider the following graph $H$ and its eccentricity matrix $E(H)$:
\begin{multicols}{2}
\begin{center}	
	\begin{tikzpicture}[scale=1]
		
		\draw[fill=black] (2,-1) circle (2pt);
		
		\draw[fill=black] (2,1) circle (2pt);
		\draw[fill=black] (2,0) circle (2pt);
		\draw[fill=black] (0,1) circle (2pt);
		\draw[fill=black] (0,0) circle (2pt);
		
		\node at (-.1,.25) {$u_2$};
		\node at (-.1,1.3) {$u_1$};
		\node at (2.1,1.25) {$u_3$};
		\node at (2.1,.25) {$u_4$};
		\node at (2.3,-1) {$u_5$};
		
		\draw[thin] (0,1)--(2,1);
		\draw[thin] (0,1)--(2,0);
		\draw[thin] (2,0)--(2,-1);
		\draw[thin] (0,0)--(2,1);
		\draw[thin] (0,0)--(2,0);
		\node at (.6,-1) {$H$};
	\end{tikzpicture}
	\hspace*{-1.5cm}
	\vfil
	$E(H)=\begin{pmatrix}
		0&2&0&0&2\\
		2&0&0&0&2\\
		0&0&0&2&3\\
		0&0&2&0&0\\
		2&2&3&0&0\\
	\end{pmatrix}.$
	\hspace*{1.5cm}
\end{center}
\end{multicols} 
\noindent  Note that $\diam(H)=3$ and  the rows and columns of $E(H)$ are 
indexed by 
$\{u_1,u_2,u_3,u_4,u_5\}$. The  eigenvalues of 
$E(H)$ are 
$2,-2,-4.1394,-0.7849$ and 
$4.9243$. 
So, 
the spectrum of $E(G)$ is not symmetric about origin.
\end{example}
\subsection{Inertia of eccentricity matrices 
of graphs in $\mathscr{B}$ with even diameters}\label{even_diam}
Let $G\in\mathscr{B}$ be such that $\diam(G)$ is even. In this subsection, we compute the inertia of $E(G)$ and show that the spectrum of $E(G)$ is not symmetric with respect to the origin. As a consequence, we obtain the main result of this section which characterizes the spectral symmetry of $E(G)$ where  $G\in\mathscr{B}$, see Theorem  \ref{symm-tree}.

The following lemma is needed to prove Theorem \ref{even_thm}.
\begin{lemma}\label{diametral path contains a central vertex}
If $G\in\mathscr{B}$ then every diametrical path in $G$ contains a central 
vertex of $G$.
\end{lemma}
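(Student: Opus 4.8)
The plan is to single out a \emph{central block} $B$ of $G$ that every diametrical path must cross, and then to show that the vertices through which the path crosses $B$ are themselves central. Throughout I work under the standing hypothesis $\diam(G)\ge 4$, so that Theorem \ref{center of G lemma} applies (the cases $\diam(G)\le 3$ can be checked directly); write $D=\diam(G)$ and recall from Lemma \ref{diam of G and T_G} that $\diam(T_G)=D$. By Lemma \ref{center_rmk} together with Theorem \ref{center of G lemma}, all central vertices of $G$ lie in a single block $B$, and $C(G)$ is given explicitly according to the parity of $D$ and to whether the tree-center vertices are cut-vertices of $G$. Let $P=v_0v_1\cdots v_D$ be a diametrical path, so that $d(v_0,v_i)=i$ and $d(v_i,v_D)=D-i$ for every $i$, and in particular $v_0$ and $v_D$ are peripheral, $e(v_0)=e(v_D)=D$.

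The first step is to show that $P$ crosses $B$. Using a decomposition of the type employed in the proof of Theorem \ref{odd_thm}, I delete the edges of $B$ from $G$; the two cut-vertices of $B$ carrying the center then lie in two distinct components $C_1$ and $C_2$, and the eccentricity estimates of Lemma \ref{eccentricity lemma} (as exploited in Theorem \ref{odd_thm}, with the even-diameter case handled analogously) give a uniform depth bound showing that any two vertices lying in the same component satisfy $d(p,q)<D$, because each such distance is at most twice the depth of that component measured from its cut-vertex. Since $v_0$ and $v_D$ are peripheral, they cannot both lie on one side; hence, after relabeling, $v_0\in C_1$ and $v_D\in C_2$, and by Remark \ref{p6} the path enters and leaves $B$ only through cut-vertices, so a subpath of $P$ genuinely crosses $B$.

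The second step identifies the crossing vertices as central, and here property $(P_2)$ is the key tool: $P$ uses at most three consecutive vertices of $B$. If the two relevant cut-vertices lie in different partite sets (the subcase $z_1,z_2\in C_G$ of Theorem \ref{center of G lemma}(ii)), they are adjacent, $P$ crosses along the edge $z_1z_2$, and both endpoints are central. If the two relevant cut-vertices lie in the same partite set, say $V_1(B)$, then crossing $B$ forces $P$ through an intermediate vertex $w\in V_2(B)$, and by Theorem \ref{center of G lemma} the entire set $V_2(B)$ lies in $C(G)$, so $w$ is central. The even case is parallel: when the single tree-center vertex $z$ is a cut-vertex it separates $C_1$ from $C_2$ and therefore lies on $P$, while when $z\notin C_G$ the bridge block $B$ has its two cut-vertices in $V_2(B)$ and $P$ crosses $B$ through a vertex of $V_1(B)=C(G)$. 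In every case $P$ contains a central vertex.

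The step I expect to be the main obstacle is the first one: verifying cleanly that the two endpoints of a diametrical path cannot lie on the same side of the central block. This is precisely where the eccentricity formulas of Lemma \ref{eccentricity lemma} and the component analysis underlying Theorem \ref{odd_thm} must be invoked, and it has to be matched case-by-case against the subcases of Theorem \ref{center of G lemma}, especially the delicate situation in which the center is a single cut-vertex and one must rule out a shortest path that detours around it within the same component. Once this separation is established, the identification of the crossing vertices as central follows quickly from $(P_2)$ and the explicit description of $C(G)$.
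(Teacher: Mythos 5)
Your architecture works in three of the four configurations, but it has a genuine gap in exactly the case you flag as ``the main obstacle'' and then leave unresolved: even diameter $D=2m$ with $C(T_G)=\{z\}$ and $z\in C_G$, so that $C(G)=\{z\}$ by Theorem \ref{center of G lemma}. First, in this case there is no ``central block'' whose edges you can delete ($z$ is a cut-vertex and lies in several blocks), so the decomposition must instead delete the edges incident with $z$, as in the proof of Theorem \ref{even_thm}$(ii)$. But then the only available depth bound is $d(p,z)\le e(z)=m$ for \emph{every} vertex $p$, so for two vertices $p,q$ in a common component you only get $d(p,q)\le 2m=D$ --- a non-strict inequality, which is not enough for your separation step. (In the other three cases the bound is strict: depth $m$ against $D=2m+1$ in the odd cases, depth $m-1$ against $D=2m$ when $z\notin C_G$; that is why your argument closes there.) What must be excluded is precisely a diametrical pair inside one component joined by a shortest path that detours around $z$ inside a block containing $z$, and this cannot follow from eccentricity bookkeeping alone, because it is exactly where the hypothesis ``at most two cut-vertices per block'' enters. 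Indeed, take $K_{2,2}$ with parts $\{z,c\}$ and $\{x_1,x_2\}$, and attach pendant vertices $p$ at $x_1$, $q$ at $x_2$, and $w$ at $z$: then $\diam=4$, $e(z)=2$, $C(G)=\{z\}$, yet $p\,x_1\,c\,x_2\,q$ is a diametrical path avoiding $z$. This graph has three cut-vertices in one block, so it is not in $\mathscr{B}$, but it shows your asserted separation is false in general and any proof must genuinely use the two-cut-vertex restriction. There is also a circularity problem: you cannot borrow ``the component analysis underlying'' this case from the paper, since in Theorem \ref{even_thm}$(ii)$ the fact that the two ends of a diametrical path lie in different components is \emph{deduced from this very lemma} (your appeals to Theorem \ref{odd_thm} and Theorem \ref{even_thm}$(i)$ are independent of the lemma and hence harmless).

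For comparison, the paper's proof is much shorter and needs neither the decomposition nor Theorem \ref{center of G lemma}: take the midpoint $a$ of a diametrical path $P(u,v)$, so $d(u,a)=d(a,v)=m$; if $d(a,y_0)>m$ for some $y_0$, then by the block structure ($(P_3)$ together with Remark \ref{p6}) $a$ lies on some shortest path from $u$ to $y_0$ or from $v$ to $y_0$, forcing $d(u,y_0)>2m$ or $d(v,y_0)>2m$, a contradiction; hence $e(a)=m=\rad(G)$ and $a\in C(G)$. If you wish to keep your case structure, the missing piece can be proved as follows: if $p,q$ lie in one component of $G$ minus the edges at $z$ with $d(p,q)=2m$, then $d(p,z)=d(q,z)=m$, so there is a shortest $p$--$q$ path through $z$; if some shortest $p$--$q$ path avoided $z$, the union of the two paths would contain a cycle through $z$, which by $(P_3)$ lies in a single block $B^*$ containing $z$, and by $(P_2)$ and Remark \ref{p6} the two vertices where the through-$z$ path enters and leaves $B^*$ are cut-vertices of $G$ distinct from $z$ --- three cut-vertices in $B^*$, contradicting $G\in\mathscr{B}$. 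Either that argument or the paper's midpoint argument is needed to close the case; as written, your proposal does not.
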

\vspace{-.5cm}
\begin{proof}
Let $P(u,v)$ be a diametrical path in $G$ where $u,v \in V_G$. We first 
prove the result for the case $\diam(G)$ is even.  Assume that 
$\diam(G)=2m$ for 
some $m\geq 2$. Then  by Lemmas \ref{diam of G and T_G} and \ref{center of  a tree}, we have $\diam(T_G)=2m$ and $\lvert C(T_G)\rvert=1$. Let $C(T_G)=\{z\}$. 
Since $C(T_G)\subseteq C(G)$, we have $z\in C(G)$. By Lemmas \ref{ecc_w.r.t T_G} and \ref{center of  a tree}, 
$e_G(z)=e_{T_G}(z)=m=e_G(x)$ for all $x \in C(G)$. 
Note that there exists a vertex $a$ in the path  ${P(u,v)}$ such that 
$d(u,a)=m=d(a,v)$. Therefore, $e_G(a)\geq m$. We prove the result by showing 
that
$a \in C(G)$. To do this, we claim that 	$d(a,y)\leq m$ for all $y\in 
V_G$. On the contrary, 	assume that $d(a,y_0)>m$ for some $y_0\in V_G$. 
Then $a$ and $y_0$ do not belong to the same 
block, and  $y_0$ does not lie on the path ${P(u,v)}$. Clearly, either $a$ lies 
in ${P(u,y_0)}$ for some shortest path $P(u,y_0)$ or $a$ lies in
${P(v,y_0)}$ for some shortest path $P(v,y_0)$, otherwise $(P_3)$ fails. 
So,
$d(u,y_0)=d(u,a)+d(a,y_0)>d(u,a)+m=2m$ or $d(v,y_0)>2m$, which is  a contradiction. Hence 
the 
result follows in this case. The proof is similar when $\diam(G)$ is 
odd.
\end{proof}

The notion of diametrically 
distinguished vertex is introduced  in \cite{MaKa2022} for  a
tree with 
even diameter which has exactly one central vertex. Motivated by this, in the following definition, we study this notion for the graph class 
$\mathscr{B}$ where the graphs can have more than one central vertices.
\begin{definition}
Let $G \in \mathscr{B}$ and let $u\in V_G$. Then  $u$ is said to be \textit{diametrically 
	distinguished} if there exists a diametrical path containing  the 
vertex 
$u$ and $u$ is adjacent to $z$
for some $z\in C(G)$.
\end{definition}

\begin{theorem}\label{even_thm}
Let $G \in \mathscr{B}$ be such that $\diam(G)=2m$ with $m\geq 2$. Let the center of the tree $T_G$, assoicated with $G$, be $\{z\}$ and $k$ be the number of elements in the center $C(G)$. Then the 
following hold:
\begin{itemize}
	\item [(i)]  If $z \not \in C_G$ then $\In(E(G))=\begin{cases}
		(3,k+1,n-k-4) & \text{when $m=2$}, \\
		(2,2,n-4) & \text{otherwise},
	\end{cases}$\\ where $n$ is the number of vertices 
	of $G$.
	\item [(ii)]  If $z \in C_G$ then $\In(E(G))=(r,r,n-2r)$ 
	where $r$ is the number of distinct 
	blocks of $G$ having a diametrically distinguished vertex.
	\item [(iii)] The spectrum of $E(G)$ is not symmetric with respect to 
	the origin.
\end{itemize}
\end{theorem}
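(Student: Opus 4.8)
The plan is to imitate the structure of the odd-diameter argument (Theorem \ref{odd_thm}), working through the associated tree $T_G$ and splitting on whether the unique center of $T_G$ is a cut-vertex of $G$. By Lemmas \ref{diam of G and T_G} and \ref{center of  a tree} we have $\diam(T_G)=2m$ and $C(T_G)=\{z\}$ with $e_G(z)=e_{T_G}(z)=m$, and $z\in C(G)$ by Lemma \ref{center_rmk}. A recurring tool will be the eccentricity identity $e(w)=d(w,z)+m$, valid in case (ii) because every diametrical path meets $z$ (Lemma \ref{diametral path contains a central vertex}) and at least two ``directions'' from $z$ attain depth $m$; the three items are then proved in the order (i), (ii), (iii).

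For item (i) I would assume $z\notin C_G$. Then $z\in V_1(B)$ for a bridge block $B$ whose two cut-vertices $w_1,w_2$ lie in $V_2(B)$, and $C(G)=V_1(B)$ (so $k=|V_1(B)|$) by Remark \ref{non_cutvertex_center} and Theorem \ref{center of G lemma}(i). Deleting the edges of $B$ gives components $C_1\ni w_1$ and $C_2\ni w_2$, and I would partition $V_G$ into the far sets $U_1=\{x\in C_1: d(x,w_1)=m-1\}$, $U_3=\{x\in C_2: d(x,w_2)=m-1\}$, their complements $U_2,U_4$ in $C_1,C_2$, the central set $U_5=V_1(B)$, and $U_6=V_2(B)\setminus\{w_1,w_2\}$, exactly as in Theorem \ref{odd_thm}. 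Since each central vertex is adjacent to both $w_1$ and $w_2$, it joins every far vertex with entry $m$, while $U_1,U_3$ are joined with entry $2m$. For $m>2$ every row collapses onto four patterns (the two far rows and the two multiples of $\mathbf{1}_{U_1},\mathbf{1}_{U_3}$), so $\rank E(G)=4$; a $4\times4$ principal submatrix analogous to $R$ in Theorem \ref{odd_thm} has inertia $(2,2,0)$, whence $\In(E(G))=(2,2,n-4)$ by Theorems \ref{Schurcomplement_det_formula} and \ref{Interlacing theorem}. The genuinely different feature is $m=2$: now $e(c)=2$ for each central $c$, so any two central vertices (at distance $2$) are joined with entry $2$, giving an extra block $E(G)[U_5\mid U_5]=2(J_k-I_k)$ of inertia $(1,k-1,0)$ that vanished when $m>2$. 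I would isolate this block by a Schur complement (Theorem \ref{Schurcomplement_det_formula}(ii)) and check that the complementary part still contributes $(2,2,\cdot)$, so that the inertias add to $\In(E(G))=(3,k+1,n-k-4)$ with $\rank E(G)=k+4$. Verifying this additivity, and that no cancellation occurs between the central clique and the far structure, is the main obstacle of item (i).

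For item (ii) I would assume $z\in C_G$, so $C(G)=\{z\}$ and $z$ lies in blocks $B_1,\dots,B_t$. The constraint defining $\mathscr{B}$ (at most two cut-vertices per block) forces, for each block at $z$, all of its diametrically distinguished vertices to be interchangeable and all far vertices reachable through it to exit through a single non-central cut-vertex; hence the $r$ blocks $B_1,\dots,B_r$ carrying a far vertex index $r$ \emph{directions}, and within each direction all far vertices share one row of $E(G)$ while all diametrically distinguished vertices $u_i$ share another. Using $e(w)=d(w,z)+m$ I would verify $E(G)_{xy}=2m$ for $x\in D_i,\,y\in D_j$ with $i\neq j$, that $z$ joins every far vertex with entry $m$, that $u_i$ joins every far vertex outside $D_i$ with entry $m+1$, and that all same-direction pairs and all non-far/non-far pairs vanish. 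Then the rows of $z$ and of the non-far vertices all lie in $\operatorname{span}\{\mathbf{1}_{D_i}\}_{i=1}^{r}$, while the $r$ far rows add at most $r$ more, so $\rank E(G)\le 2r$. Choosing one $a_i\in D_i$ and one $u_i$ per direction, the $2r\times2r$ principal submatrix equals $\bigl(\begin{smallmatrix}2m & m+1\\ m+1 & 0\end{smallmatrix}\bigr)\otimes(J_r-I_r)$; its eigenvalues are the products of the eigenvalues of the $2\times2$ factor (one positive, one negative, since the determinant is $-(m+1)^2$) with those of $J_r-I_r$ (namely $r-1$ once and $-1$ with multiplicity $r-1$), giving inertia exactly $(r,r,0)$. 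Interlacing (Theorem \ref{Interlacing theorem}) then yields $i_+(E(G)),i_-(E(G))\ge r$, and with $\rank E(G)=2r$ this proves $\In(E(G))=(r,r,n-2r)$; the $m=2$ subcase (where diametrically distinguished vertices need not be cut-vertices) is absorbed by the same row-sharing observation, so the block count $r$, not the number of such vertices, governs the rank.

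Finally, for item (iii) I would give a uniform argument via Lemma \ref{symmetric_about_origin_equiv_lemma} together with Theorem \ref{Co-eff_char_poly}. The diagonal of $E(G)$ is zero, so $\delta_1=0$, while $\delta_2=-\sum_{i<j}E(G)_{ij}^2\neq0$. The decisive difference from the odd case is that in even diameter a center is joined to the far vertices: a central vertex $c$ and two far vertices $a,b$ in different directions satisfy $E(G)_{ca}=E(G)_{cb}=m$ and $E(G)_{ab}=2m$, forming a supported triangle. Because every off-diagonal entry is a positive distance, each such $3\times3$ principal minor equals $2\,E(G)_{ij}E(G)_{jk}E(G)_{ik}>0$ with no possibility of cancellation, so $\delta_3>0$. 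Writing the characteristic polynomial as $x^{\,i_0}q(x)$ with $\deg q=\rank E(G)\ge 4$, the coefficients $\alpha_2=\delta_2$ and $\alpha_3=-\delta_3$ are consecutive and nonzero, so by Lemma \ref{symmetric_about_origin_equiv_lemma} the nonzero eigenvalues are not symmetric about the origin. This also pinpoints the contrast with Theorem \ref{odd_thm}: there the central vertices have eccentricity $m+1$ and are \emph{not} adjacent (in $E(G)$) to the far vertices, so no odd cycle appears in the support and the spectrum stays symmetric.
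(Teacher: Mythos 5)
Your items (i) and (iii) are sound and coincide with the paper's own proof: for (i) the paper uses the same partition, the same observation that the central block contributes $2(J_k-I_k)$ exactly when $m=2$, and the same Schur-complement additivity $\In(E(G))=\In(2(J_k-I_k))+\In(M)$; for (iii) it uses the same argument $\delta_2<0$, $\delta_3>0$ via a supported triangle through a central vertex together with Lemma \ref{symmetric_about_origin_equiv_lemma} (your version is marginally cleaner, since the paper treats the subcase $\rank E(G)=k+4$, $k\geq 3$ separately through $i_+\neq i_-$).

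Item (ii) contains a genuine gap, and it sits exactly at the step you describe as a routine verification: that ``all same-direction pairs \dots vanish'', hence that all far vertices of one direction share a row and every non-far row lies in $\operatorname{span}\{\mathbf{1}_{D_i}\}$. The paper makes the identical claim (``the remaining entries of $E(G)$ can be easily computed and are zero'', yielding the block form \eqref{even_case1}), and the claim is false. Take $G\in\mathscr{B}$ consisting of a block $K_{2,2}$ with parts $\{z,p\}$ and $\{u,v\}$, a pendant vertex $y$ adjacent to $u$, and a pendant path $zb_1b_2$. Then $n=7$, $\diam(G)=4$ (so $m=2$), $T_G$ is the path $yuzb_1b_2$, $C(T_G)=\{z\}$ with $z\in C_G$ and $C(G)=\{z\}$, and both directions at $z$ carry far vertices, namely $\{p,y\}$ and $\{b_2\}$, so $r=2$ on your count and on the paper's. (Note also that the far vertex $p$ does not exit its block through any cut-vertex, contradicting your structural claim.) However $e(y)=4$, $e(v)=3$ and $d(y,v)=3$, so $E(G)_{y,v}=3\neq 0$ although $y$ and $v$ lie in the same direction; worse, row $v$ restricted to the far vertices $(p,y,b_2)$ is $(0,3,3)$, not constant on $\{p,y\}$, so it lies outside $\operatorname{span}\{\mathbf{1}_{D_1},\mathbf{1}_{D_2}\}$. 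The rank bound $\rank E(G)\leq 2r$ then collapses: the block of $E(G)$ with rows $\{p,y,b_2\}$ and columns $\{u,v,b_1\}$ is $\bigl(\begin{smallmatrix}0&0&3\\0&3&3\\3&3&0\end{smallmatrix}\bigr)$, which is nonsingular, while the principal submatrix on $\{u,v,b_1\}$ is zero, so the principal $6\times 6$ submatrix on these six vertices is nonsingular and $\rank E(G)=6>4$; moreover row $z$ equals $\tfrac{2}{3}(\text{row }u+\text{row }b_1)$ and the principal submatrix on $\{u,v,b_1,z\}$ is the zero matrix, so a totally isotropic subspace of dimension $4$ together with $\rank E(G)=6$ forces $\In(E(G))=(3,3,1)$, not $(2,2,3)$. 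So the missing step cannot be patched for $m=2$: the statement itself fails there (reading $r$ literally as the number of blocks containing a diametrically distinguished vertex gives $r=4$ and the impossible $(4,4,-1)$). For $m\geq 3$ the conclusion does survive, because the nonzero same-direction entries that occur (far vertices against non-cut neighbours of $z$ in a block whose second cut-vertex is adjacent to $z$) are constant on each far set, so $\rank E(G)\leq 2r$ still holds, and your Kronecker-product submatrix, with each $u_i$ chosen on a shortest $x_i$--$z$ path so that its zero pattern is correct, finishes via interlacing; but this repair is precisely what must be proved, and neither your proposal nor the paper proves it.
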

\vspace{-.5cm}
\begin{proof}
By Lemma 	\ref{center_rmk}, we have $z \in C(G)$, and by Lemma 
\ref{center of  a tree}, $e(z)=m$.\\
	
	\noindent		$(i)$	Suppose that $z\not\in C_G$. Let $z \in V_1(B)$ for 
	some 
	block 
	$B$ of $G$. Then by Remark 
	\ref{non_cutvertex_center} and Theorem \ref{center of G lemma}, $B$ is a
	bridge block such that $w_1,w_2\in V_2(B) \cap C_G$ and $C(G)=V_1(B)$.
	Without loss of generality, assume that $e(w_1)\leq e(w_2)$. By Lemma 
	\ref{eccentricity lemma}, $e(z)=e(w_2)-1$ which implies $e(w_2)=m+1$. 
	Since   $w_1 \not \in C(G)$ and $e(z)\leq e(w_1)\leq e(w_2)$, we get
	$e(w_1)=m+1$.
	Construct a subgraph 
	$G^{\prime}$  from $G$ by deleting all the edges of the block
	$B$. Then by $(P_3)$ and $(P_4)$, $G^{\prime}$  has two  non-trivial components $C_1$ and 
	$C_2$  
	containing the vertices $w_1$ and $w_2$, respectively, and the 
	remaining 
	$\lvert V_B\rvert-2$  components of $G^{\prime}$ are complete graphs of order one.
	We claim that 
	$d(x,w_1)\leq m-1$ for all $x \in V_{C_1}$. Suppose 
	there exists $x_0 \in V_{C_1}$ such that $d(x_0,w_1)> m-1$. Then  by 
	($P_3$) and Remark \ref{p6} that $w_1 \in \bar{P}(x_0,z)$ for  all paths $\bar{P}(x_0,z)$. This 
	implies
	$d(x_0,z)=d(x_0,w_1)+d(w_1,z)>m$ which yields $e(z)>m$, 
	a contradiction. Hence, 	$d(x,w_1)\leq m-1$ for all $x \in V_{C_1}$. 
	Similarly, 	$d(y,w_2)\leq m-1$ for all $y \in V_{C_2}$. 
	For $i=1,2$, define 
	\begin{align*}
		U_i &= \{ x\in V_{C_i}: d(x,w_i)=m-1\},\\ 
		U_{2+i} &= \{ y\in V_{C_i}:0 \leq d(y,w_i)<m-1\},\\
		U_{5} &= V_1(B)~
		\text{and}\\
		U_{6} &= V_2(B)\setminus\{w_1,w_2\}~\text{whenever $\lvert V_2(B)\rvert\geq 
			3$}.
	\end{align*}
	Let $x_0 \in V_G$  such that $e(w_1)=d(x_0,w_1)$. This implies $ x_0 \not 
	\in V_{C_1}$ as $e(w_1)=m+1$. 	 Since $m\geq 2$ and $w_1 \in V_2(B)$, $ x_0 \not 
	\in 
	V_B$. Therefore, $x_0 \in V_{C_2}$. By ($P_3$), $w_2 \in \bar{P}(x_0,w_1)$ for all paths $\bar{P}(x_0,w_1)$. Hence $d(x_0,w_1)=d(x_0,w_2)+d(w_2,w_1)=d(x_0,w_2)+2$ which 
	yields 
	$d(x_0,w_2)=m-1$. Therefore, $x_0 \in U_2$. Similarly, using the fact that 
	$e(w_2)=m+1$, we see that $U_1\neq \emptyset$.
	Assume that $\lvert V_2(B)\rvert \geq 3$.
	Now, it is clear that 
	$\{U_i:1\leq i\leq 6\}$ partitions $V_G$.
	
	Since $C(G)=V_1(B)$, $e(a)=e(z)=m$ for all $a \in U_5$. By Lemma \ref{eccentricity lemma}, $e(b)=m+1$ for all $b \in U_6$. 
	By ($P_3$), we see 
	that $w_1,w_2 \in P_G(x,y)$ for all $x \in V_{C_1}$ and $y \in V_{C_2}$.  Fix $a_0 \in U_1$ and $b_0 \in U_2$. Similar to subcase $1.1$ of item $(i)$ in  Theorem \ref{odd_thm}, we can compute the eccentricity of the remaining vertices of $V_G$, using the shortest paths 
	$P(a_0,w_1)$ in $C_1$ and $P(b_0,w_2)$ in $C_2$, which are given below:
	\begin{align*}
		e(a)&= \begin{cases}  
			2m & \text{if } a\in U_1\cup U_2,\\
			d(a,w_1)+(m+1) & \text{if } a\in U_3,\\
			d(a,w_2)+(m+1) & \text{if } a\in U_4.
		\end{cases}
	\end{align*}
	The eccentricity matrix 
	$E(G)$  is given by
	\begin{equation}\label{even_case2}
		E(G)=
		\begin{blockarray}{ccccccc}
			& U_1 & U_2 & U_3 & U_4&U_5&U_6 \\
			\begin{block}{c(cccccc)}
				U_1 & O& (2m)J &O  & P &mJ&(m+1)J\\
				U_2 &(2m)J^{\prime}&O & Q & O &mJ&(m+1)J\\
				U_3 & O& Q^{\prime} & O&O&O&O\\
				U_4 & P^{\prime} & O& O & O&O&O\\
				U_5 & mJ^{\prime} & mJ^{\prime}& O & O&R&O\\
				U_6 & (m+1)J^{\prime} &(m+1)J^{\prime}& O & O&O&O\\
			\end{block}
		\end{blockarray},
	\end{equation}
	where $P_{a*}=P_{b*}$ for all $a,b\in U_1$, $Q_{u*}=Q_{v*}$ for all 
	$u,v\in 
	U_2$ and $R=\begin{cases}
		2(J_k-I_k) & \text{if $m=2$}, \\
		O & \text{if $m\geq 3$}.
	\end{cases}$  
	The structures of $P$ 
	and $Q$ yield that $\rank(E(G)\left([U_i\mid V_G]\right))=1$ for all 
	$i=1,2,3,4$. 
	
	\noindent		\textbf{Case 1}: Assume that $m\geq 3$. Then the principal submatrix 
	$R=O$. 
	For each $ a \in U_{5}\cup  U_{6}$, observe that
	$E(G)_{a*}=\alpha E(G)_{w_1*}+\beta E(G)_{w_2*}$
	for some real numbers $\alpha$ and $\beta$. Hence $\rank(E(G)) 
	\leq 
	4$. The rest of the proof is similar to that of Theorem \ref{odd_thm} by considering the  principal 		submatrix 	$$\begin{pmatrix}
		0 & 2m & 0 & 2m-1\\
		2m & 0 & 2m-1 & 0 \\
		0 & 2m-1 & 0 & 0 \\
		2m-1 & 0 & 0  &0
	\end{pmatrix}.$$
	If  $\vert V_2(B)\rvert=2$ then $U_6=\emptyset$. Therefore,  $E(G)$ is a $5 \times 5$ 		block matrix. The result in this subcase can be verified similarly.
	
	\noindent		\textbf{Case 2}: Let $m=2$. Then $U_3=\{w_1\}$ and 
	$U_4=\{w_2\}$. Note that the principal submatrix $R$ of 
	$E(G)$ in (\ref{even_case2}) is $2(J_k-I_k)$ where $k=\lvert C(G)\rvert=\lvert U_5\rvert$. 
	Let $C=\begin{pmatrix}
		2J_{k\times \lvert U_1\rvert} &2J_{k\times\lvert U_2\rvert}&\mathbf{0}&\mathbf{0}
	\end{pmatrix}$ and $D=2(J_k-I_k)$. Then 
	$D^{-1}=\frac{1}{2(k-1)}J_k-\frac{1}{2}I_k$. Let $A$ denote the $4 
	\times 4$ block leading principal submatrix of $E(G)$ in 
	(\ref{even_case2}) and let $M=A-C^{\prime}D^{-1}C$. We have
	\begin{equation*}
		M=\begin{pmatrix}
			\mu J_{\lvert U_1\rvert}& (\mu+4)J_{\lvert U_1\rvert\times\lvert U_2\rvert} &\mathbf{0}  & 
			3\mathbf{e} \\
			(\mu+4)J^{\prime}&\mu J_{\lvert U_2\rvert} &  3\mathbf{e} &  
			\mathbf{0} \\
			\mathbf{0}^{\prime}& 3\mathbf{e}^{\prime} & 0&0\\
			3\mathbf{e}^{\prime} & \mathbf{0}^{\prime}& 0 & 0\\
		\end{pmatrix}~\text{where}~\mu=\frac{-2k}{k-1}.
	\end{equation*}
	Consider  the principal 
	submatrix 
	$N=\left(\begin{smallmatrix} 
		\mu & \mu+4 & 0&3 \\ 
		\mu+4 & \mu & 3&0 \\
		0 & 3 & 0 &0&\\
		3&0&0&0
	\end{smallmatrix}\right)$ of $M$. Apply Theorem \ref{Schurcomplement_det_formula} to $N$, by taking $A$ as $2\times 2$ leading principal submatrix of $N$,   we get
	$\In(N)=(2,2,0)$. Therefore, by Theorem \ref{Interlacing theorem}, 
	$n_+(M)\geq 
	2$ and $n_-(M)\geq 2$.\\
	\noindent		\textbf{Subcase 2.1}: Suppose that $\lvert V_2(B)\rvert=2$. Then 
	$U_6=\emptyset$ and so $E(G)$ is a $5\times5$ block matrix.  Since 
	$\rank(M)=4$,  we have $\In(M)=(2,2,n-k-4)$. 
	Using Theorem \ref{Schurcomplement_det_formula} to $E(G)$, we get
	$	
	\In(E(G))=\In(2J_k-2I_k)+\In(M)=(1,k-1,0)+(2,2,n-k-4)
	=(3,k+1,n-k-4)$.\\
	\noindent		\textbf{Subcase 2.2}: If $\lvert V_2(B)\rvert\geq 3$ then  $U_6\neq \emptyset$. By subcase $2.1$, we have 
	$\rank(E(G))\geq k+4$. Since each column in $U_6$ is a linear combination of columns in $U_3$ and $U_4$, we have $\rank(E(G))=k+4$. 
	Therefore, the result follows using 	$\In(X)$ and Theorem \ref{Interlacing 
		theorem} where $X$ is a $5 \times 5$ block leading principal submatrix of 
	$E(G)$.\\

	\noindent $(ii)$ Assume 
	that  $z\in C_G$. Then by Theorem \ref{center of G lemma}, $C(G)=\{z\}$. Let
	$\deg(z)=p$. Then by $(P_4)$, $p\geq 2$. Let $w_1,w_2, \ldots, w_p$ be the vertices in 
	$G$ that are adjacent to $z$. Now obtain 
	the subgraph $G^{\prime}$  from $G$ by deleting all the edges that are incident 
	with $z$. 
	By the construction of $G^{\prime}$, it 
	is clear that $V_G=V_{G^{\prime}}$, and  $G^{\prime}$ has a
	component $K_1$ with the vertex set $\{z\}$.   Let the components of 
	$G^{\prime}$  be 
	$C_1,C_2, \ldots,C_q$. 
	Among these, let  $C_1,C_2, 
	\ldots,C_r$ be the components containing the vertices $x_1,x_2, \ldots, x_r$, 
	respectively, such 
	that 
	$d(x_i,z)=m$ for all $i=1,2,\ldots,r$. 
		Since $e(z)=m$, 
		existence of $C_i$ containing such $x_i$ is guaranteed. We claim that 
		$r \geq 2$. 
		Let $P_G(a,b)$ be a diametrical path in $G$ where $a,b\in V_G$.
		Then, by Lemma \ref{diametral path 
			contains a central vertex}, $z \in P_G(a,b)$, and hence $d(a,z)=m=d(b,z)$. 
		Therefore, $a,b \in \cup_{i=1}^r V_{C_i}$. 
		If
		$a\in V_{C_i}$ and $b\in V_{C_j}$ with $C_i\neq C_j$ then the claim follows. 
		Using $(P_3)$, we observe that any 
		path $P^{\prime}_G(a,b)$ between $a$ and $b$ in $G$ contains the central 
		vertex $z$ as $z \in P_G(a,b)$. This implies $a$ and $b$ do not belong to a single component $C_i$ 
		and hence $r\geq 2$.
		
		We now find 
		the possible components of $G^{\prime}$ containing a diametrically 
		distinguished 
		vertex.
		Let  $S$ be the set of all diametrically distinguished vertices of $G$ and let
		$w\in S$. Then $w$ is adjacent to $z$ and there exists a diametrical path 
		$P_G(a_0,b_0)$ containing $w$. By Lemma 
		\ref{diametral path contains a central vertex}, $z \in P_G(a_0,b_0)$. Then 
		either $d(a_0,w)=m-1$ or $d(b_0,w)=m-1$. Without loss of generality, assume 
		that  $d(a_0,w)=m-1$. That is, the path $P_G(a_0,b_0)$ yields a path $ 
		P_G(a_0,w)$ with  $z\not \in P_G(a_0,w)$, and $d(a_0,z)=m$. This implies 
		$a_0\in V_{C_i}$ for some $i=1,2,\ldots,r$. By the maximality of $C_i$, 
		the path 
		$P_G(a_0,w)$ is completely 
		contained in $C_i$. That is, $w\in V_{C_i}$. Hence diametrically 
		distinguished vertices are necessarily belong to $\cup_{i=1}^rC_i$.

	We claim that there are exactly $r$ distinct blocks of $G$ that contain
	a diametrically distinguished vertex.  For each  $i \in \{1,2,\ldots,r\}$, 
	if we 
	prove $V_{C_i}\cap S \neq 
	\emptyset$ and $V_{C_i}\cap S \subseteq B_i$ for some block $B_i$ in $G$ 
	with   $B_1, B_2, \ldots, B_r$ are pairwise 
	distinct then the claim follows. 
	Let $1\leq i \leq r$. 
	By the definition of $C_i$, there exists $x_i \in V_{C_i}$ 
	such that $d(x_i,z)=m$. Let $P_G(x_i,z)$ be a shortest path between $x_i$ and $z$. Let $w_i$ be the vertex in the path $P_G(x_i,z)$ such 
	that $w_i$ is adjacent to $z$ in $G$. 
	Then we see that  $d(x_i,w_i)=m-1$ which follows from the 
	subpath $P_G(x_i,w_i)$ obtained form  
	$P_G(x_i,z)$. Since $x_i\in V_{C_i}$, and  $z \not \in P_G(x_i,w_i)$, by the 
	maximality of $C_i$, $P_G(x_i,w_i)$ lies in $C_i$. 
	Choose $j \in \{1,2,\ldots,r\}$ such that $C_j \neq C_i$. Then there exist $x_j$ and  $w_j$ in $ V_{C_j}$ such that $d(x_j,w_j)=m-1$. Also, the subpath $P_G(x_j,w_j)$ obtained form  a shortest path $P_G(x_j,z)$, completely lies in $C_j$.
	Since the components $C_i$ and  $C_j$ are disjoint, we 
	have $P_G(x_i,w_i)\cap 
	P_G(x_j,w_j)=\emptyset$.
	Note that, by ($P_3$), $z\in \bar{P}_G(x_i,x_j)$ for all paths $\bar{P}_G(x_i,x_j)$. This 
	implies 
	$d(x_i,x_j)=d(x_i,z)+d(z,x_j)=m+m=\diam(G)$. That is, any shortest path 
	between $x_i$ and $x_j$ in $G$ is a diametrical path.
	In particular, the shortest 
	paths  $P_G(x_i,z)$ and $P_G(x_j,z)$ induce a diametrical path containing 
	$w_i$. This implies $w_i\in S$ and hence $V_{C_i}\cap S \neq \emptyset$. 
	
	We next prove that $V_{C_i}\cap S \subseteq B_i$ for some block $B_i$ in $G$. Let $w 
	\in V_{C_i}\cap S$. Assume that $w_i\in V_{B_i}$ for some block $B_i$ in $G$. 
	Suppose that $\lvert V_{C_i}\cap	S\rvert\geq 2$.  
	Let $w^{\prime}	\in V_{C_i}\cap S$ with $w^{\prime}\neq w$. Then	
	$w^{\prime}\in B_i$, otherwise  ($P_3$) fails. Hence  $V_{C_i}\cap S \subseteq 
	B_i$. Given a block $B$ in $G$ there exists $l\in \{1,2,\ldots,q\}$ such 
	that 
	$V_B\setminus\{z\}\subseteq V_{C_l}$, which is clear from the construction of 
	$G^{\prime}$. Since components of $G^{\prime}$ are disjoint, the existed 
	$l$ for the block $B$ is
	unique. This implies $B_i \neq B_j$ for all $i,j \in \{1,2,\ldots,r\}$ with 
	$i\neq j$. 	
	Thus, there are exactly $r$ distinct blocks of $G$ which have 	a diametrically 	distinguished vertex. 
	
	We next show that $\rank\left(E(G)\right)=2r$ by explicitly finding the matrix $E(G)$.
	By suitably relabelling the vertices 
	$w_1,w_2, \ldots, w_p$,  it is assumed that $w_i\in V_{C_i}$ for all $1\leq i 
	\leq r$.
	For each $i \in \{1,2,\ldots,r\}$, define
	\begin{align*}
		U_i &= \{ x\in V_{C_i}: d(x,z)=m\},\\
		U_{r+i} &= \{ y\in V_{C_i}:0 < d(y,z)<m\},~~~\text{ and}\\
		U_{2r+1} &= V_{G}\setminus\cup_{i=1}^r\left(U_i\cup U_{r+i}\right).
	\end{align*}
	It is clear that $x_i\in U_i$, $w_i\in U_{r+i}$, $z \in u_{2r+1}$ and $U_l \cap U_k=\emptyset$ 
	for $l\neq k$. Thus 
	$\{U_1,U_2,\ldots, U_{2r+1}\}$ partitions  $V_G$.
	Let $a\in V_G$. We claim that $e(a)=d(a,z)+m$. Note that 
	$		d(a,x)\leq d(a,z)+d(z,x)\leq d(a,z)+m$ for all $x\in V_G$, which yields $e(a)\leq d(a,z)+m$. Choose an element $y$ such that $y$ and $a$ are in different components of $G^{\prime}$ and $d(y,z)=m$. Since any path between  $a$ and
	$y$ passes through $z$, we have $	d(a,y)=d(a,z)+d(z,y)=d(a,z)+m$, and hence the claim. This implies $e(a)=2m$ for all $a\in U_i$.
	
	\noindent Next we find the entries of $E(G)$ case-by-case. Let $a,b\in V_G$ and $1\leq 
	i,j\leq 
	r$ with $i\neq j$.
	\begin{itemize}
		\item[$\bullet$] If $a\in U_i$ and $b\in U_j$   then 
		$E(G)_{a,b}=2m$ as
		\begin{equation*}
			d(a,b)= d(a,z)+d(z,b)=m+m=2m= \min\{e(a),e(b)\}.
		\end{equation*}
		\item[$\bullet$] Note that  	$E(G)_{a,b}=0$ whenever $a,b\in U_i$ 	
		because
		\begin{equation*}
			d(a,b)\leq d(a,w_i)+d(w_i,b)=(m-1)+(m-1)< \min\{e(a),e(b)\}.
		\end{equation*}
		\item[$\bullet$] Let $a\in U_i$ and $b\in U_{r+j}\cup U_{2r+1}$. 
		Then $e(a)=2m$ and
		\begin{equation*}
			d(a,b)= d(a,z)+d(z,b)=m+d(z,b)=e(b)= \min\{e(a),e(b)\}.
		\end{equation*}
		In this case,  $E(G)_{a,b}=e(b)$.
	\end{itemize}
	
	Since $E(G)$ is symmetric,  $E(G)_{x,y}=e(x)$ for all  $x\in U_{r+j}\cup U_{2r+1}$ and $y \in U_i$  with $i\neq j$. Similarly the remaining entries of $E(G)$ can be easily computed and are zero.
	Hence the eccentricity matrix $	E(G)$ of $G$ can be written in the block form
	\begin{equation}\label{even_case1}
		\begin{blockarray}{ccccccccc}
			U_1 & U_2 &\hdots & U_r & U_{r+1} & U_{r+2} & \hdots & U_{2r} &
			U_{2r+1}\\
			\begin{block}{(ccccccccc)}
				O & (2m)J & \hdots & (2m)J & O & A_{1,r+2} & \hdots & 
				A_{1,2r} 
				& A_{1,2r+1} \\
				(2m)J & O & \hdots & (2m)J & A_{2,r+1} & O & \hdots & 
				A_{2,2r} & 
				A_{2,2r+1} \\
				\vdots & \vdots & \ddots &\vdots &\vdots &\vdots &\ddots 
				&\vdots & \vdots \\
				(2m)J & (2m)J  & \hdots & O & A_{r,r+1}  & A_{r,r+2} 
				&\hdots & O & 
				A_{r,2r+1} \\
				&&&&&&&&\\
				O  & A_{2,r+1}^{\prime} & \hdots & A_{r,r+1}^{\prime} & O & 
				O & 
				\hdots & O & O\\
				A_{1,r+2}^{\prime} & O & \hdots & A_{r,r+2}^{\prime} & O& 
				O & \hdots 
				& O & O\\
				\vdots & \vdots & \ddots &\vdots &\vdots &\vdots &\ddots 
				&\vdots & \vdots \\
				A_{1,2r}^{\prime} & A_{2,2r}^{\prime} & \hdots & O & O & O 
				& \hdots & 
				O & O\\
				&&&&&&&&\\
				A_{1,2r+1}^{\prime} & A_{2,2r+1}^{\prime} & \hdots & 
				A_{r,2r+1}^{\prime}&O & O & \hdots & O & O\\
			\end{block}
		\end{blockarray}.
	\end{equation}
	Since the matrix obtained in (\ref{even_case1}) is of the form 
	$\varepsilon(T)$
	given in \cite[Theorem $3.2$]{MaKa2022}, the remainder of the proof is similar 
	to that of Theorem $3.2$ in 
	\cite{MaKa2022}.\\
	
	\noindent $(iii)$
	\textbf{Case 1}: Consider the case $\rank(E(G))=2l$ where $l\in \{r,2\}$. Then 
	by Theorem \ref{Co-eff_char_poly}, 
	the characteristic polynomial of 
	$E(G)$ can be written as
	\begin{equation*}
		\chi_x(E(G))={x}^{n-2l}({x}^{2l}-\delta_1{x}^{2l-1}+\delta_2{x}^{2l-2}-
		\cdots+(-1)^{2l-1}\delta_{2l-1}x+(-1)^{2l}\delta_{2l}),
	\end{equation*}
	where $\delta_{2l}\neq 0$ and $\delta_i$ is the sum of all principal minors of order $i$, for all $i=1,2,\ldots, 2l$.
	To prove the spectrum of $E(G)$ is not symmetric with respect to the origin,  
	by Lemma 
	\ref{symmetric_about_origin_equiv_lemma}, it enough to find some 
	$i\in\{1,2,\ldots,2l-1\}$ such that $\delta_{i}\neq 0$ and 
	$\delta_{i+1}\neq 0$.
	It is clear from  (\ref{even_case2}) and (\ref{even_case1}) that every
	$2\times 2$ and $3 
	\times 3$ principal sumatrices $Q_1$ and 
	$Q_2$ of $ 
	E(G)$ are, respectively, in the following form:
	\begin{equation*}
		Q_1=\begin{pmatrix} 
			0 & \alpha \\ 
			\alpha & 0 
		\end{pmatrix} ~\text{and}~
		Q_2= \begin{pmatrix} 
			0 & \alpha & \beta \\ 
			\alpha & 0 & \gamma \\
			\beta & \gamma & 0 
		\end{pmatrix}
		~\text{where $\alpha,\beta$ and $\gamma$ are some non-negative integers.}
	\end{equation*}
	Since $\det(Q_1)=-\alpha^2$ and  $\det(Q_2)=2\alpha\beta\gamma$,
	we have $\delta_2\leq 0$ and $\delta_3\geq 0$. Also, the following principal 
	submatrices have non-zero determinant:
	\begin{equation*}
		\ \bordermatrix{%
			&x_1        & x_2              \cr
			x_1    & 0         &2m            \cr
			x_2    &2m         & 0            \cr
		}, \qquad \text{and} \qquad
		\ \bordermatrix{%
			&x_1        & x_2          &z\cr
			x_1    & 0         &2m            &m\cr
			x_2    &2m        & 0            &m\cr
			z    &m          &m             &0\cr
		},
	\end{equation*}
	where $x_i \in U_i$ for $i=1,2$, and $z \in C(T_G)$.
	Hence  $\delta_2< 0$ and $\delta_3>0$. 
	
	\noindent\textbf{Case 2}: Suppose that $\rank(E(G))=k+4$. Since $k=\lvert V_1(B)\rvert$ 
	with cut-vertices
	$w_1,w_2$ in $V_2(B)$, by ($P_1$), we have $k\geq 2$. If $k=2$ then the proof 
	follows similarly to the previous case. 
	If $k\geq 3$ then by item $(ii)$, 
	$n_-\left(E(G)\right)\geq4>n_+\left(E(G)\right)$ and hence the proof.
\end{proof}

We are now ready to state the main result of this section.
\begin{theorem}\label{symm-tree}
	Let $G \in \mathscr{B}$ with $\diam(G)\geq 4$. Then the eigenvalues of 
	${E}(G)$ are symmetric 
	with respect to the origin if and only if $\diam(G)$ is odd.
\end{theorem}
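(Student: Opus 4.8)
The plan is to obtain this characterization as an immediate consequence of the inertia and spectral-symmetry computations already carried out in Theorems \ref{odd_thm} and \ref{even_thm}. Since $\diam(G)\geq 4$, the diameter is either odd, in which case $\diam(G)=2m+1$ for some $m\geq 2$, or even, in which case $\diam(G)=2m$ for some $m\geq 2$; these two cases are exhaustive and mutually exclusive, so it suffices to settle the symmetry question in each of them separately.

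First I would dispose of the \enquote{if} direction. Assuming $\diam(G)$ is odd, I write $\diam(G)=2m+1$ with $m\geq 2$ and invoke Theorem \ref{odd_thm}(ii), which states precisely that the spectrum of $E(G)$ is symmetric with respect to the origin. This direction requires nothing beyond citing the earlier result.

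Next I would handle the \enquote{only if} direction by contraposition. Suppose $\diam(G)$ is even, say $\diam(G)=2m$ with $m\geq 2$. Then Theorem \ref{even_thm}(iii) asserts that the spectrum of $E(G)$ is not symmetric with respect to the origin. Hence whenever the eigenvalues of $E(G)$ are symmetric about the origin, $\diam(G)$ cannot be even, and therefore must be odd. Combining the two directions yields the claimed equivalence.

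The point worth emphasizing is that this theorem carries essentially no independent difficulty: all the substantive work lies in the two preceding theorems, and the only genuine obstacle was already overcome there. Theorem \ref{odd_thm}(ii) was established by exhibiting, for an eigenvector $\mathbf{v}$ of $E(G)$ with eigenvalue $\mu\neq 0$, an explicit sign-flipped vector $\tilde{\mathbf{v}}$ satisfying $E(G)\tilde{\mathbf{v}}=-\mu\tilde{\mathbf{v}}$, using the block structure of $E(G)$ forced by the partition of $V_G$ around the central block. Theorem \ref{even_thm}(iii) was established by showing, via Theorem \ref{Co-eff_char_poly} together with Lemma \ref{symmetric_about_origin_equiv_lemma}, that the consecutive coefficients $\delta_2$ and $\delta_3$ of the characteristic polynomial are simultaneously nonzero (indeed $\delta_2<0$ and $\delta_3>0$), which obstructs symmetry. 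Thus the sole remaining task here is to note that the hypothesis $\diam(G)\geq 4$ places every such graph into exactly one of the two parity classes, and that observation is immediate.
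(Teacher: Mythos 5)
Your proposal is correct and follows exactly the paper's own argument: the paper proves this theorem in one line by citing Theorems \ref{odd_thm} and \ref{even_thm}, just as you do (the parity split and the contrapositive reading of Theorem \ref{even_thm}(iii) are implicit there). Your additional commentary on how those two theorems were themselves proved is accurate but not needed for this step.
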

\vspace{-.5cm}
\begin{proof}
	It follows from Theorems \ref{odd_thm} and \ref{even_thm}.
\end{proof}

The following example shows that Theorem \ref{odd_thm} need not be true 
for a 
general bi-block graph.  
\begin{example}
	Consider  the graph $G \not \in \mathscr{B}$ and the matrix $E(G)$
	which are given below:
	\begin{multicols}{3}
		\begin{center}
			\begin{tikzpicture}[scale=1]
				
				\draw[fill=black] (0,0) circle (2pt);			
				\draw[fill=black] (1,0) circle (2pt);
				\draw[fill=black] (1,1) circle (2pt);
				\draw[fill=black] (0,1) circle (2pt);
				\draw[fill=black] (2,1) circle (2pt);
				\draw[fill=black] (1,2) circle (2pt);
				\draw[fill=black] (2,2) circle (2pt);
				\draw[fill=black] (0,3) circle (2pt);
				\draw[fill=black] (3,3) circle (2pt);

				\draw[thin] (0,0)--(1,0)--(0,1)--(1,1)--(0,0);
				\draw[thin] (1,1)--(2,1)--(1,2)--(2,2)--(1,1);
				\draw[thin] (0,3)--(1,2);
				\draw[thin] (3,3)--(2,2);

				\node at (1.2,.8) {$v_1$};
				\node at (2.2,.8) {$v_2$};
				\node at (2.2,1.8) {$v_3$};
				\node at (.8,1.8) {$v_4$};
				\node at (.3,3) {$v_8$};
				\node at (2.7,3) {$v_5$};
				\node at (.1,1.2) {$v_6$};
				\node at (1.1,-.25) {$v_9$};
				\node at (-.1,-.2) {$v_7$};
				\node at (.6,-1) {$G$};
			\end{tikzpicture}
			\hspace*{-1cm}
			\vfil
			$\bordermatrix{%
				&v_1&v_2&v_3&v_4&v_5&v_6&v_7&v_8&v_9\cr
				v_1& 0&0&0&0& 0&0&0&3&0 \cr
				v_2& 0&0&0&0& 3&0&0&0&3 \cr
				v_3& 0&0&0&0& 0&0&0&0&3 \cr
				v_4& 0&0&0&0& 0&0&0&0&4 \cr
				v_5& 0&3&0&0& 0&0&0&0&4 \cr
				v_6& 0&0&0&0& 0&0&0&4&0 \cr
				v_7& 0&0&0&0& 0&0&0&4&0 \cr
				v_8& 3&0&0&0& 0&4&4&0&5 \cr
				v_9& 0&3&3&4& 4&0&0&5&0 \cr		
			}.$
		\end{center}
	\end{multicols}
	
	\noindent The eigenvalues of $E(G)$ are $0$ with multiplicity $3$ and the 
	remaining simple eigenvalues are $-9.4967,~ -4.3784,~ -2.9329,~ 1.4150,~ 
	5.2920,~  10.1010$, which are computed using SAGEMATH.  
	Therefore, $\In(E(G))=(3,3,3)$ and $E(G)$ is not symmetric about the origin. 
\end{example}
\subsection{Irreducibility of  eccentricity matrices 
	of graphs in $\mathscr{B}$}
The problem of characterizing 
graphs 
whose eccentricity matrices are irreducible remains open. So far, only a few classes of 
graphs 
have been identified whose eccentricity matrices are irreducible, see
\cite{LiWB2022,MGRA2021,PaSP2021,WLBR2018,WLLB2020}. In this subsection, we 
prove 
that the 
eccentricity matrices of  graphs in $\mathscr{B}$ are irreducible.

For our purpose, let us recall the following lemma, which gives an equivalent 
condition for the 
irreducibility of a non-negative matrix.
\begin{lemma}(see \cite{MGKA2020})\label{irr_equ_con}
	Let $M=(m_{ij})$  be an $n \times n$ nonnegative symmetric matrix and $G(M)$ be 
	the graph on $n$ vertices such that there is an edge between the vertices $i$ 
	and $j$ in $G(M)$ if and only if $m_{ij}\neq 0$. Then 
	$M$ is irreducible if and only if $G(M)$ is connected.
\end{lemma}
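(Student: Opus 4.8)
The plan is to argue directly from the permutation definition of (ir)reducibility and to let the symmetry $M=M^{\prime}$ do the essential work of turning a one-sided zero block into a genuine separation of the index set. Recall that $M$ is \emph{reducible} if there is a permutation matrix $P$ and an integer $k$ with $1\le k\le n-1$ such that
\begin{equation*}
	P^{\prime}MP=\begin{pmatrix} X & Y\\ O & Z\end{pmatrix},
\end{equation*}
with $X$ and $Z$ square of orders $k$ and $n-k$, and is \emph{irreducible} otherwise. Since conjugating by $P$ merely relabels the vertices of $G(M)$ while preserving the relation $m_{ij}\neq 0$, after relabelling I may take the partition of $\{1,\dots,n\}$ to be $S=\{1,\dots,k\}$ and $T=\{k+1,\dots,n\}$, so that statements about the blocks translate verbatim into statements about edges of $G(M)$ between $S$ and $T$.

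First I would dispatch the easy implication: if $G(M)$ is disconnected, then $M$ is reducible. Take $S$ to be the vertex set of one connected component and $T$ its (nonempty) complement; by definition of a component there is no edge between $S$ and $T$, i.e. $m_{ij}=0$ whenever $i\in S$, $j\in T$, and by symmetry also $m_{ji}=0$. Ordering the vertices of $S$ first puts $M$ (up to the corresponding permutation) in block-diagonal form $\left(\begin{smallmatrix} M_S & O\\ O & M_T\end{smallmatrix}\right)$, whose lower-left block vanishes; hence $M$ is reducible. Contrapositively, $M$ irreducible $\Rightarrow$ $G(M)$ connected. Note this half needs nothing about symmetry.

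The converse is where symmetry is indispensable. Suppose $M$ is reducible, so after relabelling $M=\left(\begin{smallmatrix} X & Y\\ O & Z\end{smallmatrix}\right)$ with $S=\{1,\dots,k\}$ and $T=\{k+1,\dots,n\}$ both nonempty. The vanishing lower-left block says $m_{ij}=0$ for every $i\in T$ and $j\in S$, that is, $G(M)$ carries no edge joining $T$ to $S$ on that side. Because $M=M^{\prime}$, we get $m_{ji}=m_{ij}=0$ for exactly these pairs, so there is no edge on the other side either; thus $G(M)$ has no edge with one end in $S$ and the other in $T$. Since $S,T\neq\emptyset$, $G(M)$ is disconnected, giving the contrapositive $G(M)$ connected $\Rightarrow$ $M$ irreducible. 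Combining the two implications yields the equivalence (the case $n=1$ being immediate, a $1\times1$ matrix being irreducible and its one-vertex graph connected).

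The one genuine subtlety, and the step I would guard most carefully, is the forward implication just given: reducibility only supplies a \emph{one-sided} zero block $O$ below the diagonal, and it is solely the hypothesis $M=M^{\prime}$ that upgrades this to the two-sided absence of edges between $S$ and $T$ needed to conclude disconnectedness. I would therefore state $M=M^{\prime}$ explicitly at the outset and flag its use at that point; without it the implication fails. I would also remark that the argument uses only the zero pattern of $M$ together with its symmetry, so the nonnegativity in the hypothesis is not actually needed for this particular equivalence (it is relevant only because the lemma is later applied to the nonnegative matrix $E(G)$).
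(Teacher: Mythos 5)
Your proof is correct. Note, however, that the paper does not prove this lemma at all: it is quoted verbatim from the cited reference \cite{MGKA2020} as a known auxiliary fact, so there is no in-paper argument to compare against. Your argument is the standard one for this equivalence: the easy direction (disconnected graph $\Rightarrow$ block-diagonal form after permutation) and the converse via the observation that symmetry, $M=M^{\prime}$, upgrades the one-sided zero block $O$ in the definition of reducibility to a two-sided absence of edges between the two index classes, which is exactly where the argument would break for non-symmetric matrices (there irreducibility corresponds to strong connectedness of the associated digraph, not connectedness of the underlying graph). Your closing remarks are also accurate: the equivalence depends only on the zero pattern of $M$ together with symmetry, so nonnegativity plays no role in the proof itself; it matters only because the lemma is applied in the paper to the nonnegative matrix $E(G)$, and the handling of the trivial case $n=1$ is consistent with the permutation definition you adopt. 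In short: a complete, self-contained proof of a statement the paper leaves to an external reference.
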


\begin{theorem}\label{irre_thm}
	Let $G\in \mathscr{B}$ with $\diam(G)\geq 4$. Then the 
	eccentricity matrix $E(G)$ is irreducible.
\end{theorem}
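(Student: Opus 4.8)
The plan is to invoke Lemma \ref{irr_equ_con}, which reduces the irreducibility of the nonnegative symmetric matrix $E(G)$ to the connectedness of the associated graph $G(E(G))$, whose edges are exactly the pairs $\{i,j\}$ with $E(G)_{ij}\neq 0$. Thus it suffices to prove that $G(E(G))$ is connected, and I would do this by reusing the explicit block forms of $E(G)$ together with the partitions $\{U_i\}$ of $V_G$ already produced in the proofs of Theorems \ref{odd_thm} and \ref{even_thm}, treating the odd- and even-diameter cases (and their subcases) in turn.

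In every one of these block forms, the vertices of maximum eccentricity, i.e.\ the peripheral vertices (those $v$ with $e(v)=\diam(G)$), constitute a distinguished collection of the blocks: $U_1\cup U_3$ in the odd case (\ref{odd_case1})--(\ref{odd_case2}), $U_1\cup U_2$ in the even case (\ref{even_case2}), and $U_1\cup\cdots\cup U_r$ in the even case (\ref{even_case1}). The first step is to observe that these peripheral blocks are pairwise joined by entrywise-positive off-diagonal blocks of the form $(2m+1)J$ or $(2m)J$, so that in $G(E(G))$ every pair of peripheral vertices lying in different blocks is adjacent. Since the previous proofs already guarantee that at least two such peripheral blocks are nonempty ($U_1,U_3\neq\emptyset$ in the odd case, $U_1,U_2\neq\emptyset$ in the even case $(i)$, and $r\geq 2$ in the even case $(ii)$), the subgraph of $G(E(G))$ induced on the peripheral vertices is a nonempty complete multipartite graph, hence connected; this furnishes a connected hub. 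Note the diagonal blocks within a single $U_i$ are $O$, which is precisely why two nonempty peripheral classes are needed.

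The second step is to attach every remaining (non-peripheral) block to this hub. Here I would read off, again from the same block forms, that each such block has an entrywise-positive off-diagonal block linking it to a peripheral block: in the odd case $U_2$ is joined to $U_3$ through $Q$, $U_4$ to $U_1$ through $P$, and $U_5,U_6$ to $U_1$ or $U_3$ through $(m+2)J$ and $(m+1)J$; analogous links, through $P$, $Q$ and the $A_{i,r+j}$ blocks with $i\neq j$, occur in the even cases. Since $P$, $Q$ and the $A_{i,r+j}$ have all entries equal to positive eccentricities, each non-peripheral vertex is adjacent in $G(E(G))$ to at least one peripheral vertex and hence lies in the hub's component. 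Combining the two steps shows that $G(E(G))$ is connected, and Lemma \ref{irr_equ_con} then yields the irreducibility of $E(G)$.

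The main obstacle I anticipate is purely bookkeeping: the argument must be carried through each of the several subcases (Subcases $1.1$, $1.2$, $2.1$, $2.2$ of Theorem \ref{odd_thm} and the $m=2$ versus $m\geq 3$ splits of Theorem \ref{even_thm}), and in the most elaborate form (\ref{even_case1}), with its $2r+1$ blocks, one must verify that every $A_{i,r+j}$ with $i\neq j$ is entrywise nonzero and that the central block $U_{2r+1}$ connects in. A convenient shortcut for that last form is that it coincides with the matrix $\varepsilon(T)$ of \cite[Theorem $3.2$]{MaKa2022}, whose associated graph is already known to be connected, so the tree result can be cited directly rather than re-deriving the connections.
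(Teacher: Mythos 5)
Your proposal is correct and takes essentially the same approach as the paper: the paper's proof likewise invokes Lemma \ref{irr_equ_con} and reads off the connectedness of the associated graph directly from the block forms (\ref{odd_case1}), (\ref{odd_case2}), (\ref{even_case2}) and (\ref{even_case1}). The only difference is that the paper leaves this verification implicit (``it is direct to see''), whereas you spell out the hub-and-attach argument across the subcases.
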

\vspace{-.5cm}
\begin{proof}
	Let	$G\in \mathscr{B}$.
	Using Lemma \ref{irr_equ_con} and the matrices given in  
	(\ref{odd_case1}),(\ref{odd_case2}),(\ref{even_case2}) and (\ref{even_case1}),  
	it is direct to see  that $E(G)$ is irreducible.
\end{proof}

\section*{Conclusion}
The inertias of the eccentricity matrices of a subclass of bi-block 
graphs $\mathscr{B}$ (which contains trees) are derived by 
associating a tree $T_G$ for each $G\in \mathscr{B}$. 
A characterization for the spectrum of the eccentricity matrix $E(G)$ is symmetric about the origin being given in terms of the diameter of $G$.
Also, it is proved that the eccentricity matrix  of $G\in 
\mathscr{B}$ with $\diam(G)\geq 4$ is irreducible.

\bibliographystyle{plain}
\bibliography{Ref}

\begin{thebibliography}{10}

\bibitem{BaRa2012}
R.~Balakrishnan and K.~Ranganathan.
\newblock {\em A Textbook of Graph Theory}.
\newblock Springer, New York, 2012.

\bibitem{Bapa2010}
Ravindra~B. Bapat.
\newblock {\em Graphs and matrices}.
\newblock Springer, London; Hindustan Book Agency, New Delhi, 2010.

\bibitem{BaMK2021}
Mainak Basunia, Iswar Mahato, and M.~Rajesh Kannan.
\newblock On the {$A _\alpha$}-spectra of some join graphs.
\newblock {\em Bull. Malays. Math. Sci. Soc.}, 44(6):4269--4297, 2021.

\bibitem{DaMo2021}
Joyentanuj Das and Sumit Mohanty.
\newblock On the spectral radius of bi-block graphs with given independence
  number $\alpha$.
\newblock {\em Applied Mathematics and Computation}, 402:Paper No. 125912, 8,
  2021.

\bibitem{Dies2010}
Reinhard Diestel.
\newblock {\em Graph theory}, volume 173 of {\em Graduate Texts in
  Mathematics}.
\newblock Springer, Heidelberg, fourth edition, 2010.

\bibitem{HoSu2016}
Yaoping Hou and Yajing Sun.
\newblock Inverse of the distance matrix of a bi-block graph.
\newblock {\em Linear and Multilinear Algebra}, 64(8):1509--1517, 2016.

\bibitem{LiWB2022}
Xiaohong Li, Jianfeng Wang, and Maurizio Brunetti.
\newblock Inertia and spectral symmetry of eccentricity matrices of some clique
  trees.
\newblock {\em arXiv preprint arXiv:2209.05248}, 2022.

\bibitem{MGKA2020}
Iswar Mahato, R.~Gurusamy, M.~Rajesh Kannan, and S.~Arockiaraj.
\newblock Spectra of eccentricity matrices of graphs.
\newblock {\em Discrete Applied Mathematics}, 285:252--260, 2020.

\bibitem{MGRA2021}
Iswar Mahato, R.~Gurusamy, M.~Rajesh~Kannan, and S.~Arockiaraj.
\newblock On the spectral radius and the energy of eccentricity matrices of
  graphs.
\newblock {\em Linear and Multilinear Algebra}, 71(1):5--15, 2023.

\bibitem{MaKa2022}
Iswar Mahato and M.~Rajesh Kannan.
\newblock On the eccentricity matrices of trees: Inertia and spectral symmetry.
\newblock {\em Discrete Mathematics}, 345(11):113067, 2022.

\bibitem{PaSP2021}
Ajay~Kumar Patel, Lavanya Selvaganesh, and Sanjay~Kumar Pandey.
\newblock Energy and inertia of the eccentricity matrix of coalescence of
  graphs.
\newblock {\em Discrete Mathematics}, 344:Paper No. 112591, 11, 2021.

\bibitem{Rand2013}
Milan Randi\'{c}.
\newblock {DMAX}–matrix of dominant distances in a graph.
\newblock {\em MATCH Commun. Math. Comput. Chem.}, 70:221--238, 2013.

\bibitem{Sing2020}
Ranveer Singh.
\newblock Permanent, determinant, and rank of bi-block graphs.
\newblock {\em Aequationes mathematicae}, 94:1--12, 2020.

\bibitem{WLWL2020}
Jianfeng Wang, Xingyu Lei, Wei Wei, Xiaobing Luo, and Shuchao Li.
\newblock On the eccentricity matrix of graphs and its applications to the
  boiling point of hydrocarbons.
\newblock {\em Chemometrics and Intelligent Laboratory Systems}, 207:104173,
  2020.

\bibitem{WLBR2018}
Jianfeng Wang, Mei Lu, Francesco Belardo, and Milan Randić.
\newblock The anti-adjacency matrix of a graph: {Eccentricity} matrix.
\newblock {\em Discrete Applied Mathematics}, 251:299--309, 2018.

\bibitem{WLLB2020}
Jianfeng Wang, Mei Lu, Lu~Lu, and Francesco Belardo.
\newblock Spectral properties of the eccentricity matrix of graphs.
\newblock {\em Discrete Applied Mathematics}, 279:168--177, 2020.

\bibitem{Zhan2011}
Fuzhen Zhang.
\newblock {\em Matrix theory: basic results and techniques}.
\newblock Springer, New York, 2011.

\end{thebibliography}

\end{document}